\DeclareSymbolFontAlphabet{\amsmathbb}{AMSb}%
\DeclarePairedDelimiter{\floor}{\lfloor}{\rfloor}
\newcommand{\R}{\amsmathbb{R}}
\newcommand{\C}{\amsmathbb{C}}
\newcommand{\bH}{\amsmathbb{H}}
\newcommand{\N}{\amsmathbb{N}}
\newcommand{\cA}{\mathcal{A}}
\newcommand{\cB}{\mathcal{B}}
\newcommand{\cC}{\mathcal{C}}
\newcommand{\cD}{\mathcal{D}} 
\newcommand{\cF}{\mathcal{F}}
\newcommand{\cH}{\mathcal{H}}
\newcommand{\cK}{\mathcal{K}}
\newcommand{\cL}{\mathcal{L}}
\newcommand{\cO}{\mathcal{O}}
\newcommand{\cV}{\mathcal{V}}
\DeclareMathOperator{\E}{\amsmathbb{E}} %
\DeclareMathOperator{\Cov}{\mathsf{Cov}}
\DeclareMathOperator{\trace}{Tr}
\DeclareMathOperator{\im}{\mathrm{im}}
\DeclareMathOperator{\ke}{\mathrm{ker}}
\newcommand{\dd}{\,\mathrm{d}}
\newcommand{\dom}{\mathrm{dom}} %
\newcommand{\inpro}[3][{}]{ \langle #2 , #3 \rangle_{#1} }
\newcommand{\norm}[2]{\| #1 \|_{#2}}
\newcommand{\lrnorm}[2]{\left\| #1 \right\|_{#2}}
\newcommand{\Bignorm}[2]{\Big\| #1 \Big\|_{#2}}
\newcommand{\dfloor}[1]{\floor{#1}_{\Delta t}}
\newcommand\reallywidehat[1]{%
	\savestack{\tmpbox}{\stretchto{%
			\scaleto{%
				\scalerel*[\widthof{\ensuremath{#1}}]{\kern-.6pt\bigwedge\kern-.6pt}%
				{\rule[-\textheight/2]{1ex}{\textheight}}%
			}{\textheight}%
		}{0.5ex}}%
	\stackon[1pt]{#1}{\tmpbox}%
}
\newtheorem{lemma}{Lemma}[section]
\newtheorem{proposition}[lemma]{Proposition}
\newtheorem{theorem}[lemma]{Theorem}
\theoremstyle{remark}
\newtheorem{remark}[lemma]{Remark}
\theoremstyle{definition}
\newtheorem{assumption}[lemma]{Assumption}
\newtheorem{example}[lemma]{Example}
\definecolor{darkgreen}{rgb}{0,.6,0}
\begin{document}
	\title[Approximation of SPDE covariances by finite elements]{Approximation of SPDE covariance operators by finite elements: A semigroup approach
	}
	
	\author[M.~Kov\'acs]{Mih\'aly Kov\'acs} \address[Mih\'aly Kov\'acs]{\newline Faculty of Natural Sciences, Department of Differential Equations 
	\newline Budapest University of Technology and Economics
	\newline M\H{u}egyetem rkp. 3., H-1111 Budapest, Hungary, 
	\newline Faculty of Information Technology and Bionics
	\newline P\'azm\'any P\'eter Catholic University
	\newline H-1444 Budapest, P.O. Box 278, Hungary
	\newline and
	\newline Department of Mathematical Sciences
	\newline Chalmers University of Technology \& University of Gothenburg
	\newline S--412 96 G\"oteborg, Sweden.} \email[]{kovacs.mihaly@itk.ppke.hu}
	
	\author[A.~Lang]{Annika Lang} \address[Annika Lang]{\newline Department of Mathematical Sciences
		\newline Chalmers University of Technology \& University of Gothenburg
		\newline S--412 96 G\"oteborg, Sweden.} \email[]{annika.lang@chalmers.se}
	
	\author[A.~Petersson]{Andreas Petersson} \address[Andreas Petersson]{\newline The Faculty of Mathematics and Natural Sciences
		\newline Department of Mathematics
		\newline Postboks 1053
		\newline Blindern
		\newline 0316 Oslo, Norway.}
	 \email[]{andreep@math.uio.no}
	
	\thanks{The authors wish to express many thanks to two anonymous reviewers who helped to improve the results and presentation. M.\ Kov\'acs acknowledges the support of the Marsden Fund of the Royal Society of New Zealand	through grant. no. 18-UOO-143, the Swedish Research Council (VR) through project no.\ 2017-04274 and the National Research, Development, and Innovation Fund of Hungary through grant no.\ 131545 and TKP2021-NVA-02. The work of A.\ Lang was partially supported by the Swedish Research Council (VR) (project no.\ 2020-04170), by the Wallenberg AI, Autonomous Systems and Software Program (WASP) funded by the Knut and Alice Wallenberg Foundation, and by the Chalmers AI Research Centre (CHAIR). The work of A. Petersson was supported in part by the Research Council of Norway (RCN) through project no.\ 274410, the Swedish Research Council (VR) through reg.~no.~621-2014-3995 and the Knut and Alice Wallenberg foundation.}
	
	\subjclass[2010]{60H15, 65M12, 65M60, 65R20, 45N05, 35C15}
	\keywords{stochastic partial differential equations, integral equations, covariance operators, finite element method, stochastic advection-diffusion equations, stochastic wave equations}
	
	\begin{abstract}
		The problem of approximating the covariance operator of the mild solution to a linear stochastic partial differential equation is considered. An integral equation involving the semigroup of the mild solution is derived and a general error decomposition  is proven. This formula is applied to approximations of the covariance operator of a stochastic advection-diffusion equation and a stochastic wave equation, both on bounded domains. The approximations are based on finite element discretizations in space and rational approximations of the exponential function in time. Convergence rates are derived in the trace class and Hilbert--Schmidt norms with numerical simulations illustrating the results.  
	\end{abstract}
	
	\maketitle
	
	\section{Introduction}
	
	This paper considers stochastic partial differential equations (SPDEs) 
	formulated as linear stochastic evolution equations on a Hilbert space $H$. That is to say, equations of the form
	\begin{equation}
	\label{eq:introSPDE}
	\begin{split}
	\dd X(t) + A X(t) \dd t &= F X(t) \dd t + B \dd W(s) \text{ for } t \in (0,T], T < \infty, \\
	X(0) & = \xi.
	\end{split}
	\end{equation}
	Here $X$ is an $H$-valued stochastic process, $F$ and $B$ are linear operators, $W$ is a Wiener process in $H$ with covariance operator $Q$ and $-A$ is the generator of a $C_0$-semigroup $S = (S(t))_{t \in [0,T]}$ of linear operators on $H$.  Since~\eqref{eq:introSPDE} is linear and the noise term is additive and Gaussian, the solution $X(t)$ to~\eqref{eq:introSPDE} at time $t \in (0,T]$ is an $H$-valued Gaussian random variable when the initial value $\xi$ is Gaussian. The distribution of $X(t)$ is therefore completely determined by its mean value $\E[X(t)]$ and covariance (operator) $\Cov(X(t)) = \E[(X(t) - \E[X(t)]) \otimes (X(t) - \E[X(t)])]$. Computing these quantities is therefore vital for understanding of~$X(t)$. In general, there are no analytic solutions, so numerical approximations are needed. In this paper, we focus on the approximation of $\Cov(X(t))$. 
	
	The literature on the numerical analysis of approximations of SPDE covariance operators is sparse. We are only aware of~\cite{K20,KLL17,LLS13}. Therein, the authors consider SPDEs of parabolic type and solve a tensorized equation related to the concept of a weak solution to~\eqref{eq:introSPDE}. %
	They assume the operator $A$ to be self-adjoint. In this paper, we take a different approach. We work with the mild solution of an SPDE to derive an operator-valued integral equation for the covariance, expressed in terms of the semigroup $S$. 
	
	This approach allows us to treat parabolic SPDEs where $A$ is not self-adjoint. One example of such an equation comes from the modeling of sea surface temperature dynamics, see \cite{HH87}. The equation, posed in the space $H = L^2(\cD)$ of square integrable functions on some domain $\cD \subset \R^2$, is given by
	\begin{equation}	
	\label{eq:advdiff-intro}	
	\dd X(t,x) + \cA X(t,x) \dd t= \dd W(t,x) \text{ for } t \in (0,T], x \in \cD.
	\end{equation}
	Here $A X(t)$ corresponds to $\cA X(t,\cdot) = \delta X(t,\cdot)-D \Delta X(t,\cdot) -\mathbf{a}\cdot\nabla X(t,\cdot) $, where  $X(t,x)$ is the sea surface temperature at time $t$ and point $x \in \cD$, $\mathbf{a} \in \R^2$ is the velocity vector field of the upper ocean layer, $D > 0$ is a diffusion coefficient and $\delta \in \R$ is a feedback parameter. When $\mathbf{a} \neq 0$, the operator $A$ ceases to be self-adjoint. The Wiener process $W$ models small time scale fluctuations in the heat flux across the ocean-atmosphere interface. Its covariance operator $Q$ is an integral operator with a kernel $q$ having small correlation length. Similar models have recently been considered for reconstructing the evolution of cloud systems from discrete measurements, see~\cite{MSM17}. %
	
	Moreover, our approach extends the parabolic setting to any SPDE which has a mild solution in terms of a semigroup. This includes hyperbolic SPDEs, such as a stochastic equation for the vertical displacement $U(t,x)$ of a strand of DNA suspended in a liquid at time~$t$ and space $x \in \cD \subset \R^d$, $d = 1, 2, 3$ from \cite{D09}. It is given by
	\begin{equation}
	\label{eq:wave-intro}
	\dd \dot{U} (t,x) - \Delta U(t,x) \dd t =  - (Q U)(t,x) \dd t + \dd W(t,x) \text{ for } t \in (0,T], x \in \cD.
	\end{equation}
	The first term on the right hand side models friction due to viscosity of the fluid, while the Wiener process term $W$ corresponds to random bombardment of the DNA strand by the fluid's molecules. Writing $X = [U,\dot{U}]^\top$, the equation can be put in the form of~\eqref{eq:introSPDE}	by considering it on a product space, see~Section~\ref{sec:wave}. Figure~\ref{fig:path} shows realizations of the solutions to~\eqref{eq:advdiff-intro} and~\eqref{eq:wave-intro} for the domain $\cD = (0,1)$, see Examples~\ref{ex:heat} and~\ref{ex:wave}. %
	\begin{figure}[ht!]
		\centering
		\subfigure[A stochastic advection-diffusion equation. \label{subfig:advdiff-path}]{\includegraphics[width = .49\textwidth]{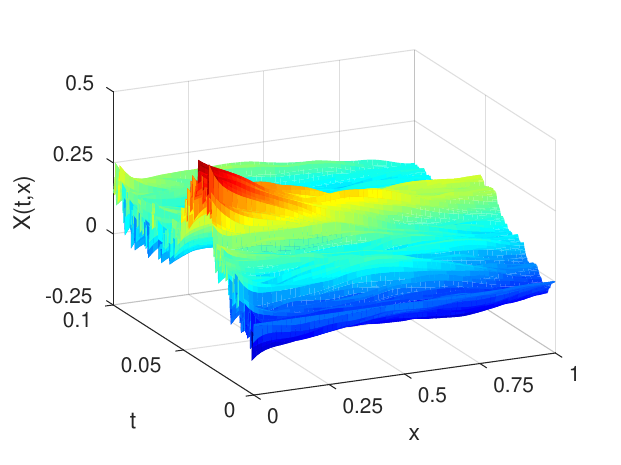}}
		\subfigure[A stochastic wave equation. \label{subfig:wave-path}]{\includegraphics[width = .49\textwidth]{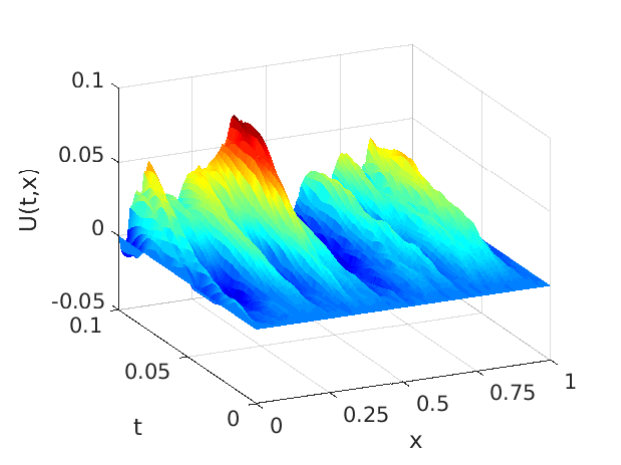}}
		\caption{Realizations of the solutions $X$ and $U$ to~\eqref{eq:advdiff-intro} and~\eqref{eq:wave-intro} when $\cD = (0,1)$.}
		\label{fig:path}
	\end{figure} 
	
	Let us now outline the content of the paper. In Section~\ref{sec:covariance}, we formulate an operator-valued integral equation for the covariance of the mild solution $X$ to~\eqref{eq:introSPDE} in an abstract Hilbert space framework, and give assumptions that ensure that it has a unique solution. We confirm that $X$ is Gaussian and that the process $[0,T] \ni t \mapsto \Cov(X(t))$ is a solution to the integral equation. The mild It\^o formula in \cite{DPJR19} is key for this. We finish the section by giving an abstract error decomposition formula for approximations of this process. The error is, for $t \in [0,T]$, analyzed with respect to the norms $\norm{\cdot}{\cL_1(H)}$ and~$\norm{\cdot}{\cL_2(H)}$. Here $\cL_1(H)$ and $\cL_2(H)$ denote the spaces of trace class and Hilbert--Schmidt operators, respectively. The first norm is a natural choice since if $(Q_j)_{j=1}^\infty$ is a sequence of covariances of some Gaussian $H$-valued random variables $(X_j)_{j=1}^\infty$ with zero mean, then $Q_j \to \Cov(X(t))$ in $\cL_1(H)$ if and only if $X_j \to X(t)$ weakly, i.e., $\E[f(X_j)] \to \E[f(X(t))]$ for all continuous and bounded functionals $f$ on $H$, see \cite{C83}. The norm of $\cL_2(H)$ is weaker. It has a natural meaning when $H = L^2(\cD)$: if $X(t) = X(t,\cdot)$ is $\cF \times \cB(\cD)$-measurable, %
	\begin{equation*}
	\inpro[H]{\Cov(X(t)) u}{v} = \int_{\cD \times \cD} \Cov(X(t,x),X(t,y)) u(x) v(y) \dd x \dd y 
	\end{equation*}
	and $\norm{\Cov(X(t))}{\cL_2(H)}^2 = \norm{\Cov(X(t,\cdot),X(t,\cdot))}{L^2(\cD \times \cD)}^2$.
	Therefore we may, formally at least, view convergence in $\cL_2(H)$ as convergence in $L^2(\cD \times \cD)$ of underlying covariance functions on $\cD$. Figure~\ref{fig:cov} shows the covariance functions for the solutions to~\eqref{eq:advdiff-intro} and~\eqref{eq:wave-intro} at $T=0.1$.
	\begin{figure}[ht!]
		\centering
		\subfigure[Covariance function for a stochastic advection-diffusion equation. \label{subfig:advdiff-cov}]{\includegraphics[width = .49\textwidth]{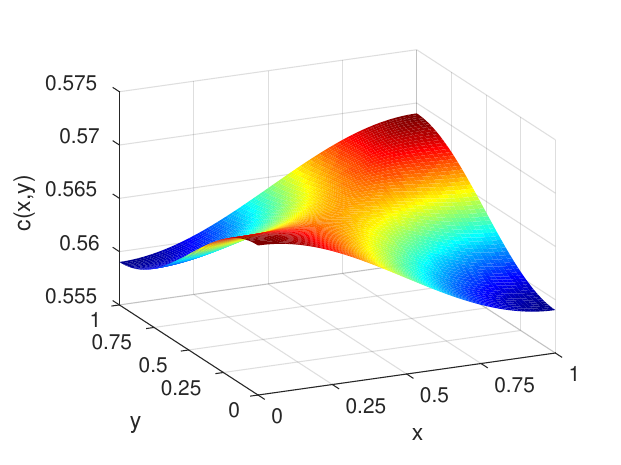}}
		\subfigure[Covariance function for a stochastic wave equation. \label{subfig:wave-cov}]{\includegraphics[width = .49\textwidth]{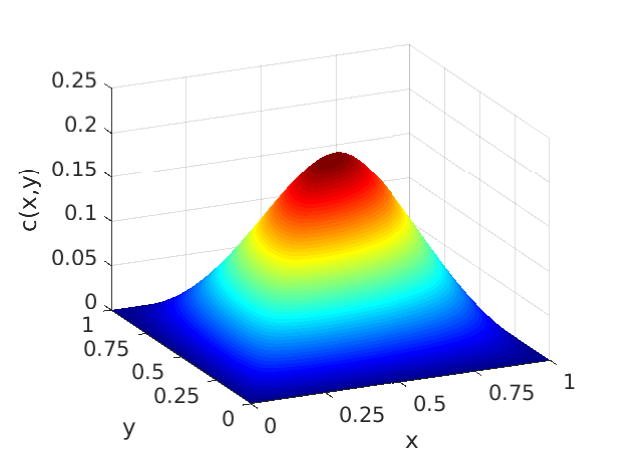}}
		\caption{Plot of the covariance functions $c(x,y) = \Cov(X(0.1,x),X(0.1,y))$ and $c(x,y) = \Cov(U(0.1,x),U(0.1,y))$, $x,y \in \cD = (0,1)$, for the solutions $X$ and $U$ to~\eqref{eq:advdiff-intro} and~\eqref{eq:wave-intro}.}
		\label{fig:cov}
	\end{figure}
	
	In Section~\ref{sec:applications} we apply our abstract framework to the two concrete equations~\eqref{eq:advdiff-intro} and~\eqref{eq:wave-intro}. In both cases, the covariance integral equations are discretized by finite elements in space and rational approximations of the driving semigroup in time. The resulting approximations are expressed as integral equations based on a fully discrete approximation $\tilde S$ of $S$.
	
	Consider a discrete approximation $\tilde X(t)$ of $X(t)$, $t \in [0,T]$. Suppose, without loss of generality, that $\E[X(t)] = \E[\tilde X(t)] = 0$. Then, by properties of the norms $\norm{\cdot}{\cL_i(H)}$, $i \in \{1,2\}$, and the H\"older inequality,
	\begin{equation}
	\label{eq:introcovboundedbystrong}
	\begin{split}
	&\norm{\Cov(X(t))-\Cov(\tilde X(t))}{\cL_i(H)} \\ 
	&\quad= \norm{\E[X(t) \otimes X(t)] - \E[\tilde X(t) \otimes \tilde X(t)]}{\cL_i(H)} \\ 
	&\quad= \frac{1}{2} \Bignorm{\E\left[(X(t) + \tilde X(t)) \otimes (X(t) - \tilde X(t))\right] \hspace{-1pt} + \hspace{-1pt}\E\left[(X(t) - \tilde X(t)) \otimes (X(t) + \tilde X(t))\right]}{\cL_{i}(H)} \\
	&\quad\le \max\left(\E[\norm{X(t)}{H}^2]^{\frac{1}{2}},\E[\norm{\tilde X(t)}{H}^2]^{\frac{1}{2}}\right) \E\left[\norm{X(t)-\tilde X(t)}{H}^2\right]^{\frac{1}{2}}.
	\end{split}
	\end{equation}
	Therefore, if $\Cov(\tilde X(t))$ can be calculated, we obtain an approximation scheme for $\Cov(X(t))$ for which the error can be bounded by the strong error $\E\left[\norm{X(t)-\tilde X(t)}{H}^2\right]^{1/2}$. In Section~\ref{sec:heat}, where a stochastic advection-diffusion equation is considered, we demonstrate that this bound is suboptimal. More precisely, we compare the convergence rate for a covariance approximation based on replacing $S$ by a fully discrete approximation $\tilde S$ in the operator-valued integral equation with the rate for the strong error with respect to an approximation $\tilde X$ of $X$ based on the same discretization $\tilde S$. It turns out that the convergence rate for our approximation is typically higher than the strong error, see Remark~\ref{rem:heatstrongvcov}. We also demonstrate, in theory and by a numerical simulation, that there are cases when the $\cL_2$ error decays faster than the stronger $\cL_1$ error.
	
	In Section~\ref{sec:wave} we work with the stochastic wave equation and provide, to the best of our knowledge, the first results on convergence rates for the approximation of covariance operators of hyperbolic SPDEs. We first consider a temporally semidiscrete approximation, which is then used for analyzing a fully discrete approximation. Numerical simulations finish the section.
	
	Throughout the paper, we adopt the notion of generic constants, which may vary from occurrence to occurrence and are independent of any parameter of interest, such as spatial or temporal step sizes. By $a \lesssim b$ we denote the existence of a generic constant such that $a \le C b$.
	
	\section{Covariance operators of stochastic evolution equations}
	\label{sec:covariance}
	
	In this section we prove existence and uniqueness of the solution to an operator-valued integral equation. We then show that the covariance of the mild solution of an SPDE is a solution of this equation. First, however, we introduce our setting and reiterate some facts from operator theory and probability theory in Hilbert spaces. 
	
	\subsection{Operator theory}
	
	Let $(H, \inpro[H]{\cdot}{\cdot})$ and $(U, \inpro[U]{\cdot}{\cdot})$ be real and separable Hilbert spaces. We denote by $\cL(H,U)$ the space of bounded linear operators from $H$ to $U$ equipped with the usual operator norm and by $\cL_1(H,U)$ and $\cL_2(H,U)$ the spaces of trace class and Hilbert--Schmidt operators, respectively. We use the shorthand notations $\cL(H)$, $\cL_1(H)$ and $\cL_2(H)$ when $U=H$. Additionally, we denote by $\Sigma(H) \subset \cL(H)$ the set of symmetric bounded operators on $H$. An operator $\Gamma \in \cL_1(H,U)$ if and only if there are two orthonormal sequences $(e_j)_{j=1}^\infty \subset H$, $(f_j)_{j=1}^\infty \subset U$ and a sequence $(\mu_j)_{j=1}^\infty \in \ell^1$ such that 
	\begin{equation}
	\label{eq:trace_characterization}
	\Gamma x = \sum_{j = 1}^{\infty} \mu_j \inpro{x}{e_j} f_j  \text{ for } x \in H.
	\end{equation} 
	The space $\cL_1(H,U)$ is a separable Banach space with norm
	\begin{equation}
	\label{eq:trace_norm}
	\norm{\Gamma}{\cL_1(H,U)} = \inf_{\substack{(a_j) \subset H \\ (b_j) \subset U}} \left\{ \sum_{j = 1}^{\infty} \norm{a_j}{H} \norm{b_j}{U} : \Gamma = \sum_{j = 1}^{\infty} \inpro[H]{\cdot}{a_j} b_j \right\},
	\end{equation}
	where the infimum is taken over all sequences $(a_j)_{j=1}^\infty \subset H, (b_j)_{j=1}^\infty \subset U$ see~\cite[Sections~47-48]{T67b}. Moreover, $\cL_2(H,U)$ is a separable Hilbert space with an inner product, for an arbitrary orthonormal basis $(e_j)_{j = 1}^\infty$ of $H$, given by $$\inpro[\cL_2(H,U)]{\Gamma_1}{\Gamma_2} = \sum_{j = 1}^{\infty} \inpro{\Gamma_1 e_j}{\Gamma_2 e_j}\text{ for }\Gamma_1, \Gamma_2 \in \cL_2(H,U),$$
	We have $\cL_1(H,U) \subset \cL_2(H,U)$ and  $\Gamma \in \cL_i(H,U)$ if and only if $\Gamma^* \in \cL_i(U,H)$ with 
	\begin{equation}
	\label{eq:schattenadjoint}
	\norm{\Gamma}{\cL_i(H,U)} = \norm{\Gamma^*}{\cL_i(U,H)}, \, i \in \{1,2\}.
	\end{equation}
	We identify the spaces $\cL_2(H,U)$ and $U \otimes H$, the Hilbert tensor product, with equivalent norms. The tensor $u \otimes v$ is regarded as an element of $\cL(H,U)$ by the relation $(u \otimes v) w = \inpro[H]{v}{w} u$ for $v,w \in H$ and $u \in U$. It can be seen that $u \otimes v \in \cL_1(H,U)$ with $\norm{u \otimes v}{\cL_1(H,U)} = \norm{u \otimes v}{\cL_2(H,U)} = \norm{u}{U} \norm{v}{H}$. Moreover, \begin{equation}
	\label{eq:HSinpro}
	\inpro[\cL_2(H,U)]{\Gamma}{u \otimes v} = \inpro[U]{\Gamma v}{u} \text{ for }\Gamma \in \cL_2(H,U).
	\end{equation} 
	If $V$ and $G$ are two other real and separable Hilbert spaces, then 
	\begin{equation}
	\label{eq:HSlinops}
	\Gamma_1 u \otimes \Gamma_2 v = \Gamma_1 (u \otimes v) \Gamma_2^*
	\end{equation}
	for $u \in U$, $v \in H$, $\Gamma_1 \in \cL(U,V)$ and $\Gamma_2 \in \cL(H,G)$, with $\Gamma_2^*$ denoting the adjoint of $\Gamma_2$. 
	The spaces $\cL_i(H,U)$, $i \in \{1,2\}$, are operator ideals: if $\Gamma_1 \in \cL(G,V)$, $\Gamma_2 \in \cL_i(U,G)$ and $\Gamma_3 \in \cL(H,U)$ then $\Gamma_3 \Gamma_2 \Gamma_1 \in \cL_i(H,V)$ with 
	\begin{equation}
	\label{eq:schatten_bound_1}
	\norm{\Gamma_1 \Gamma_2 \Gamma_3}{\cL_i(H,V)} \le \norm{\Gamma_1}{\cL(G,V)} \norm{\Gamma_2}{\cL_i(U,G)} \norm{\Gamma_3}{\cL(H,U)}.
	\end{equation}
	If $\Gamma_1 \in \cL_2(U,V)$ and $\Gamma_2 \in \cL_2(H,U)$, then $\Gamma_1 \Gamma_2 \in \cL_1(H,V)$ and
	\begin{equation}
	\label{eq:schatten_bound_2}
	\norm{\Gamma_1 \Gamma_2}{\cL_1(H,V)} \le \norm{\Gamma_1}{\cL_2(U,V)} \norm{\Gamma_2}{\cL_2(H,U)}. 
	\end{equation}
	The trace of $\Gamma \in \cL_1(H)$ is, for an arbitrary orthonormal basis $(e_j)_{j=1}^\infty$ of $H$, defined by $$\trace(\Gamma) = \sum^\infty_{j=1} \inpro[H]{\Gamma e_j}{e_j}.$$ If $\Gamma \in \Sigma^+(H) \subset \Sigma(H)$, the space of all positive semidefinite operators, then $\trace(\Gamma) = \norm{\Gamma}{\cL_1(H)}$. 
	
	\subsection{Probability theory in Hilbert spaces}
	
	Below we work on the bounded interval $[0,T]$, $T < \infty$. Let $(\Omega, \cA, (\cF_t)_{t \in [0,T]}, P)$ be a complete filtered probability space satisfying the usual conditions, which is to say that $\cF_0$ contains all $P$-null sets and $\cF_t = \cap_{s > t} \cF_s$ for all $t \in [0,T]$. 	
	By $L^p(\Omega,H)$, $p \in [1, \infty)$	we denote the space of all $H$-valued random variables~$Y$ with norm $\norm{Y}{L^p(\Omega,H)} = (\E[\norm{Y}{H}^p])^{1/p}$. For $Y,Y' \in L^2(\Omega,H)$, the cross-covariance (operator) of $Y,Y'$ is defined by $\Cov(Y,Y') = \E[(Y - \E[Y]) \otimes (Y' - \E[Y'])] \in \cL_1(H)$
	and the covariance of $Y$ by $\Cov(Y) = \Cov(Y,Y) \in \cL_1(H) \cap \Sigma^+(H)$. 
	Note that $\Cov(Y,Y')$ is uniquely determined by $\inpro[H]{\Cov(Y,Y') u}{v} = \Cov(\inpro[H]{Y}{v},\inpro[H]{Y'}{u})$. 
	
	An $H$-valued random variable $Y$ is said to be Gaussian if $\inpro[H]{Y}{v}$ is a Gaussian real-valued random variable for all $v \in H$. Then $Y \in L^p(\Omega,H)$ for all $p \ge 1$. A pair $Y,Y'$ of $H$-valued random variables is said to be jointly Gaussian if $Y \oplus Y'$ is an $H \oplus H$-valued Gaussian random variable. Then $Y$ and $Y'$ are independent if and only if $\Cov(Y,Y') =0$, cf.~\cite{M84}. 
	
	Let $W$ be a generalized Wiener process in $U$ (see \cite[Chapter~4]{DPZ14}) with covariance $Q \in \Sigma^+(U)$, not necessarily of trace class. 
	The Hilbert space $(Q^{1/2}(U), \inpro[U]{Q^{-1/2}\cdot}{Q^{-1/2}\cdot})$ is denoted by $U_0$, where $Q^{1/2}$ is the unique positive semidefinite square root of $Q$ and $Q^{-1/2}$ its pseudoinverse. 
	
	\subsection{Covariance integral equations and mild solutions to SPDEs}
	Let $\hat{H}$ be another Hilbert space such that $H \hookrightarrow \hat{H}$ continuously and densely. The main topic of study in this paper are integral equations of the form
	\begin{equation}
	\label{eq:C}
	\begin{split}
	K(t) &= S(t) Q_\xi S(t)^* + \int_{0}^{t}  S(t-s) F K(s)  S(t-s)^*+  S(t-s) K(s) ( S(t-s) F)^* \dd s \\ 
	&\quad + \int_{0}^{t} S(t-s) B ( S(t-s) B)^* \dd s,
	\end{split}
	\end{equation}
	taking values in the space $\cL_1(H)$, with the adjoint being taken with respect to the inner product of $H$. Here $S = ( S(t))_{t \in [0,T]}$ is a family of $\cL(\hat{H},H)$-valued operators, not yet assumed to be a semigroup, while $F \in \cL(H,\hat{H})$, $B \in \cL_2(U_0,\hat{H})$ and $Q_\xi \in \cL_1(H) \cap \Sigma^+(H)$. We assume that for any $v \in H$, the mapping $t \mapsto  S(t) v \in H$ is continuous on $[0,T]$ and the mappings  $t \mapsto  S(t) F v \in H$ and $t \mapsto  S(t) B \in \cL_2(U_0,H)$ are continuous on $(0,T]$. Below, we make an assumption on the boundedness of these mappings, which is used to deduce existence and uniqueness of solutions to~\eqref{eq:C} and the stochastic evolution equation
	\begin{equation}
	\label{eq:X}
	X(t) = {S}(t) X(0) + \int_{0}^{t} {S}(t-s) F X(s) \dd s + \int_0^t {S}(t-s) B \dd W(s), \text{ for } t \in (0,T].
	\end{equation}
	When $S$ is a semigroup, this is the mild solution of~\eqref{eq:introSPDE}. Here $X(0) = \xi$ is a Gaussian (possibly deterministic) $\cF_0$-measurable $H$-valued random variable.
	The stochastic integral is of the It\^o kind \cite[Chapter 4]{DPZ14}.
	
	\begin{assumption} 
		\label{assumptions:1}
		There is a constant $C<\infty$ and functions $a \in L^1([0,T],\R), b \in L^2([0,T],\R)$ such that $\norm{ S (t)}{\cL(H)} \le C$ for all $t \in [0,T]$ and $\norm{ S (t) F}{\cL(H)} \le  a(t)$, $\norm{ S (t) B}{\cL_2(U_0,H)} \le b(t)$ for all $t\in (0,T]$.
	\end{assumption}
	
	We look for a solution $K$ to~\eqref{eq:C} in the space $\cC([0,T],\cL_1(H))$ of continuous mappings with values in $\cL_1(H)$. This is a Banach space with norm $\norm{f}{\infty,\cL_1(H)} = \sup_{t \in [0,T]} \norm{f(t)}{\cL_1(H)}$.
	
	\begin{proposition}
		\label{prop:Kexists}
		Under Assumption~\ref{assumptions:1}, there is a unique solution $K \in \cC([0,T], \cL_1(H))$ to~\eqref{eq:C} such that $K(t) \in \Sigma(H)$ for all $t \in [0,T]$.
	\end{proposition}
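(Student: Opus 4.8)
The plan is to recast~\eqref{eq:C} as the fixed-point equation $K = \Phi K$ for the affine map $\Phi \colon \cC([0,T],\cL_1(H)) \to \cC([0,T],\cL_1(H))$ given by $(\Phi K)(t) = K_0(t) + (\cR K)(t)$, where the inhomogeneous part is $K_0(t) = S(t) Q_\xi S(t)^* + \int_0^t S(t-s) B (S(t-s) B)^* \dd s$ and the linear part is $(\cR K)(t) = \int_0^t S(t-s) F K(s) S(t-s)^* + S(t-s) K(s) (S(t-s) F)^* \dd s$. First I would check that $\Phi$ takes values in $\cL_1(H)$ with the right bounds. Since $S(t)$ and $S(t)F$ restrict to bounded operators on $H$ with $\norm{S(t)}{\cL(H)} \le C$ and $\norm{S(t)F}{\cL(H)} \le a(t)$ by Assumption~\ref{assumptions:1}, the operator-ideal property~\eqref{eq:schatten_bound_1} gives $\norm{S(t) Q_\xi S(t)^*}{\cL_1(H)} \le C^2 \norm{Q_\xi}{\cL_1(H)}$ and, using $\norm{S(t-s)^*}{\cL(H)} = \norm{S(t-s)}{\cL(H)} \le C$, the estimate $\norm{(\cR K)(t)}{\cL_1(H)} \le 2C \int_0^t a(t-s) \norm{K(s)}{\cL_1(H)} \dd s$. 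Meanwhile~\eqref{eq:schatten_bound_2} together with $\norm{S(t)B}{\cL_2(U_0,H)} \le b(t)$ bounds the Bochner integral in $K_0$ by $\int_0^t b(s)^2 \dd s \le \norm{b}{L^2([0,T],\R)}^2$. All three quantities are finite by Assumption~\ref{assumptions:1}.

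Next I would establish that $\Phi K \in \cC([0,T],\cL_1(H))$ whenever $K$ is. For the integral defining the $B$-term, $\cL_1(H)$-continuity of the integrand follows from the assumed $\cL_2(U_0,H)$-continuity of $s \mapsto S(s)B$, and continuity of the integral in $t$ then follows from absolute continuity of the integral and the bound $b^2 \in L^1$. For $S(t) Q_\xi S(t)^*$ I would expand $Q_\xi$ through its trace-class representation~\eqref{eq:trace_characterization}, use~\eqref{eq:HSlinops} to write $S(t) Q_\xi S(t)^* = \sum_j \mu_j (S(t) f_j) \otimes (S(t) e_j)$, and pass to the limit $t \to t_0$ term by term via the assumed strong continuity $t \mapsto S(t) v$, dominating the series by the summable sequence $2 C^2 |\mu_j| \norm{f_j}{H} \norm{e_j}{H}$ to invoke dominated convergence. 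The analogous decomposition of $K(s)$, combined with the strong continuity of $S(\cdot)$ and $S(\cdot)F$ and the uniform continuity of $K$, yields measurability and integrability of the integrand of $\cR K$ and continuity of the integral in $t$. I expect this trace-norm continuity --- upgrading the merely pointwise (strong) continuity of $S(\cdot)$, $S(\cdot)F$ and $S(\cdot)B$ to continuity of the $\cL_1(H)$-valued products --- to be the main technical obstacle, since it is precisely here that the operator-ideal estimates and the $\ell^1$ summability of the singular values must be combined.

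To obtain a unique fixed point I would not try to make $\cR$ a contraction in the sup norm --- it need not be one, since $a$ is only in $L^1$ --- but instead equip $\cC([0,T],\cL_1(H))$ with the Bielecki-type norm $\norm{f}{\mu} = \sup_{t \in [0,T]} e^{-\mu t} \norm{f(t)}{\cL_1(H)}$, which for every $\mu \ge 0$ is equivalent to $\norm{\cdot}{\infty,\cL_1(H)}$ and hence complete. Inserting $\norm{K(s)}{\cL_1(H)} \le e^{\mu s} \norm{K}{\mu}$ into the bound on $\cR$ from the first paragraph and substituting $r = t-s$ gives $e^{-\mu t} \norm{(\cR K)(t)}{\cL_1(H)} \le 2C \norm{K}{\mu} \int_0^t a(r) e^{-\mu r} \dd r$, so that $\norm{\cR K}{\mu} \le \left( 2C \int_0^T a(r) e^{-\mu r} \dd r \right) \norm{K}{\mu}$. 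The factor in parentheses tends to $0$ as $\mu \to \infty$ by dominated convergence, since $a \in L^1([0,T],\R)$; choosing $\mu$ so that it is at most $\tfrac12$ makes $\Phi$ a contraction, and Banach's fixed-point theorem supplies the unique $K \in \cC([0,T],\cL_1(H))$ solving~\eqref{eq:C}.

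It remains to show $K(t) \in \Sigma(H)$ for all $t$. Here I would observe that $\cC([0,T], \Sigma(H) \cap \cL_1(H))$ is a nonempty closed subset of the complete space, and that $\Phi$ maps it into itself: the operators $Q_\xi$ and each $S(t-s) B (S(t-s) B)^*$ are symmetric, so $K_0(t) \in \Sigma(H)$, while taking $H$-adjoints and using $(S(t-s) F K(s) S(t-s)^*)^* = S(t-s) K(s)^* (S(t-s) F)^*$ together with $K(s) = K(s)^*$ shows that the two summands of the integrand of $\cR K$ are mutual adjoints, whence $(\cR K)(t) \in \Sigma(H)$ (adjoints preserving $\cL_1(H)$ by~\eqref{eq:schattenadjoint}). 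Since a contraction preserving a nonempty closed subset has its unique fixed point in that subset, the solution $K$ satisfies $K(t) \in \Sigma(H)$ for all $t \in [0,T]$, which completes the proof.
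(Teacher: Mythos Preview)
Your proof is correct and follows essentially the same route as the paper's: both set up the fixed-point map on $\cC([0,T],\cL_1(H))$, verify $\cL_1$-continuity of the individual pieces via the trace-class expansion~\eqref{eq:trace_characterization} together with strong continuity of $S(\cdot)$, $S(\cdot)F$, $S(\cdot)B$, and then obtain contractivity in a weighted supremum norm $e^{-\sigma t}\norm{\cdot}{\cL_1(H)}$ by letting $\int_0^T a(s)e^{-\sigma s}\dd s\to 0$ via dominated convergence. The only cosmetic difference is the symmetry argument: the paper tracks symmetry through the Picard iterates $K_n$ and passes to the limit, whereas you invoke invariance of the closed subset $\cC([0,T],\Sigma(H)\cap\cL_1(H))$ under the contraction---these are logically equivalent formulations of the same fact.
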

	
	\begin{proof}
		First we note that for $v \in H$ fixed, we may write
		\begin{equation}
		\label{eq:prop:Kexists:1}
		\begin{split}
		S(t) Q_\xi S(t)^* v - S(s) Q_\xi S(s)^* v &%
		= \sum_{j = 1}^\infty (\inpro[H]{v}{(S(t) - S(s)) e_j}) S(t) \mu_j e_j \\
		&\quad+ \sum_{j = 1}^\infty \inpro[H]{v}{S(s) e_j} (S(s)-S(t)) \mu_j e_j,
		\end{split}
		\end{equation}
		where $(e_j)_{j=1}^\infty$ is an orthonormal eigenbasis of $Q_\xi$ with corresponding eigenvalues $(\mu_j)_{j=1}^\infty$. Since $(\mu_j)_{j=1}^\infty \in \ell^1$, both sums are well-defined operators in $\cL(H)$ applied to $v$. By~\eqref{eq:trace_norm} we get
		\begin{align*}
		\norm{S(t) Q_\xi S(t)^* - S(s) Q_\xi S(s)^*}{\cL_1(H)} &\le \sum_{j = 1}^\infty \norm{(S(t)-S(s))e_j}{H} \norm{\mu_j S(t) e_j}{H} \\ 
		&\quad+ \sum_{j = 1}^\infty \norm{(S(t)-S(s)) \mu_j e_j}{H} \norm{S(s) e_j}{H}.
		\end{align*} 
		Hence $S(\cdot)Q_\xi S(\cdot)^* \in \cC([0,T], \cL_1(H))$ as a consequence of Assumption~\ref{assumptions:1} and the dominated convergence theorem.
		By~\eqref{eq:schatten_bound_2}, the mapping $[0,t) \ni s \mapsto S(t-s) B ( S(t-s) B)^*$ takes values in the separable Banach space $\cL_1(H)$ and it is continuous so that the Bochner integral of it is well-defined. Similarly, for $\cK \in \cC([0,T], \cL_1(H))$, the mapping $[0,t) \ni s \mapsto S(t-s) F \cK(s)  S(t-s)^*+  S(t-s) \cK(s) ( S(t-s) F)^*$ takes values in $\cL_1(H)$ and it can be seen to be continuous by applying~\eqref{eq:trace_characterization} along with a calculation similar to~\eqref{eq:prop:Kexists:1}. The mapping 
		\begin{equation}
		\label{eq:prop:Kexists:2}
		\begin{split}
		\cK \mapsto& S(\cdot) Q_\xi S(\cdot)^* + \int_{0}^{\cdot}  S(\cdot-s) F \cK(s)  S(\cdot-s)^*+  S(\cdot-s) \cK(s) ( S(\cdot-s) F)^* \dd s \\ 
		&\quad + \int_{0}^{\cdot} S(\cdot-s) B ( S(\cdot-s) B)^* \dd s 
		\end{split}
		\end{equation}
		from $\cC([0,T], \cL_1(H))$ into itself is therefore well-defined. By Assumption~\ref{assumptions:1}, 
		\begin{align*}
		\Big\| &\int_{0}^{t}  S(t-s) F \cK_1(s)  S(t-s)^*+  S(t-s) \cK_1(s) ( S(t-s) F)^* \dd s \\ &\quad-\int_{0}^{t}  S(t-s) F \cK_2(s)  S(t-s)^*+  S(t-s) \cK_2(s) ( S(t-s) F)^* \dd s \Big\|_{\cL_1(H)} \\
		&\lesssim e^{\sigma t} \int_{0}^{t} a(t-s) e^{-\sigma (t-s)} \left(e^{-\sigma s} \norm{\cK_1(s) - \cK_2(s)}{\cL_1(H)}\right) \dd s
		\end{align*} 
		for arbitrary $\sigma \in \R$ and $\cK_1, \cK_2 \in \cC([0,T],\cL_1(H))$. Therefore, since $\lim_{\sigma \to \infty} \int^T_0 a(s) e^{-\sigma s} \dd s = 0$ by the dominated convergence theorem, the mapping~\eqref{eq:prop:Kexists:2} is a contraction with respect to the norm defined by $\sup_{t \in [0,T]} e^{-\sigma t} \norm{\cK(t)}{\cL_1(H)}$ for sufficiently large $\sigma \ge 0$. 
		This norm is equivalent to $\norm{\cdot}{\infty,\cL_1(H)}$, so
		the Banach fixed point theorem yields existence and uniqueness of $K$ as the limit of the sequence $(K_n)_{n=0}^\infty \subset \cC([0,T], \cL_1(H))$. Here $K_0 = 0$ and $K_n$, $n \ge 1$, is given by 
		\begin{align*}
		K_{n}(t) &= S(t) Q_\xi S(t)^* + \int_{0}^{t}  S(t-s) F K_{n-1}(s)  S(t-s)^*+  S(t-s) K_{n-1}(s) ( S(t-s) F)^* \dd s \\ 
		&\quad + \int_{0}^{t} S(t-s) B ( S(t-s) B)^* \dd s, t \in [0,T].
		\end{align*}
		Clearly $K_0(t) \in \Sigma(H)$ for all $t \in [0,T]$. By induction,
		$K_n(t) \in \Sigma(H)$ for all $n \in \N$, $t \in [0,T]$. Since convergence of $(K_n)_{n=0}^\infty$ in $\cC([0,T], \cL_1(H))$ yields convergence of $(K_n(t))_{n=0}^\infty$ in $\cL(H)$ we have  $\inpro[H]{K(t) u}{v} = \lim_n \inpro[H]{K_n(t) u}{v}$ for all $t \in [0,T]$ and $u,v \in H$. Therefore, $K(t) \in \Sigma(H)$ for all $t \in [0,T]$.
	\end{proof}
	
	\begin{remark}
		Since the mapping $t \mapsto  S(t) F v \in H$ is allowed to be discontinuous at $0$, the advection term in~\eqref{eq:advdiff-intro} could be treated as a linear perturbation $F$, at least in the case of Dirichlet boundary conditions, see \cite[Example~2.22]{K14}. In this case $S$ is the semigroup generated by the elliptic operator of~\eqref{eq:advdiff-intro}. For notational convenience and to easily treat more general boundary conditions, we instead choose to, in Section~\ref{sec:heat}, treat the advection term in~\eqref{eq:advdiff-intro} as part of an elliptic operator.
	\end{remark}
	
	The next proposition confirms that the solution $X$ to the stochastic evolution equation~\eqref{eq:X} is Gaussian at all times $t \in [0,T]$. As a consequence, $K(t)$ therefore determines the distribution of $X(t)$ when $S$ is a semigroup, since by Theorem~\ref{thm:covarianceisK} below, $K(t)=\Cov(X(t))$ for $t \in[0,T]$.
	
	\begin{proposition}
		\label{prop:Xexists}
		Under Assumption~\ref{assumptions:1}, there is a unique solution $X \in \cC([0,T], L^2(\Omega,H))$ to~\eqref{eq:X} and $X(t)$ is Gaussian for all $t \in [0,T]$.
	\end{proposition}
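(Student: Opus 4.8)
The plan is to reuse the Banach fixed point strategy from the proof of Proposition~\ref{prop:Kexists}, now carried out on the space $\cC([0,T], L^2(\Omega,H))$, and then to obtain Gaussianity by passing to the limit along the Picard iterates. First I would define the map $\Psi$ on $\cC([0,T],L^2(\Omega,H))$ by
\begin{equation*}
(\Psi Y)(t) = S(t) X(0) + \int_0^t S(t-s) F Y(s) \dd s + \int_0^t S(t-s) B \dd W(s)
\end{equation*}
and check that it maps this space into itself. The term $S(t)X(0)$ is continuous in $t$ with values in $L^2(\Omega,H)$ by continuity of $t \mapsto S(t)v$ together with $\norm{S(t)}{\cL(H)} \le C$ and dominated convergence; the pathwise Bochner integral is handled as in Proposition~\ref{prop:Kexists} via $\norm{S(t)F}{\cL(H)} \le a(t)$ with $a \in L^1$; and the stochastic convolution is well-defined since, by the It\^o isometry and $b \in L^2$,
\begin{equation*}
\E\left[\norm{\int_0^t S(t-s) B \dd W(s)}{H}^2\right] = \int_0^t \norm{S(t-s)B}{\cL_2(U_0,H)}^2 \dd s \le \int_0^t b(t-s)^2 \dd s < \infty,
\end{equation*}
its continuity in $t$ following by a similar estimate.

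For the contraction, only the drift integral depends on $Y$, so by Assumption~\ref{assumptions:1} and Minkowski's integral inequality
\begin{equation*}
\norm{(\Psi Y_1)(t) - (\Psi Y_2)(t)}{L^2(\Omega,H)} \le \int_0^t a(t-s) \norm{Y_1(s) - Y_2(s)}{L^2(\Omega,H)} \dd s,
\end{equation*}
which is exactly the estimate that drives the contraction in Proposition~\ref{prop:Kexists}. Inserting the weight $e^{-\gs t}$ and using that $\int_0^T a(r) e^{-\gs r} \dd r \to 0$ as $\gs \to \infty$, the map $\Psi$ becomes a contraction with respect to the equivalent norm $\sup_{t \in [0,T]} e^{-\gs t} \norm{\cdot}{L^2(\Omega,H)}$ for $\gs$ large enough, so the Banach fixed point theorem yields a unique $X \in \cC([0,T], L^2(\Omega,H))$ solving~\eqref{eq:X}, realized as the limit of the iterates $X_0 = 0$, $X_n = \Psi X_{n-1}$.

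Finally I would prove Gaussianity by induction along these iterates. The key observation is that each $X_n(t)$ is a bounded linear image of the jointly Gaussian data $(\xi, W)$: the term $S(t)X(0)$ and the stochastic convolution are Gaussian by standard properties, while the drift term is Gaussian because for each $v \in H$ one has $\inpro[H]{\int_0^t S(t-s) F X_{n-1}(s) \dd s}{v} = \int_0^t \inpro[H]{X_{n-1}(s)}{(S(t-s)F)^* v} \dd s$, a Lebesgue integral of jointly Gaussian real random variables and hence Gaussian; testing against finitely many $v$ simultaneously upgrades this to joint Gaussianity, closing the induction. Since $X_n(t) \to X(t)$ in $L^2(\Omega,H)$, for each $v$ the real variables $\inpro[H]{X_n(t)}{v}$ converge in $L^2(\Omega,\R)$, so their means and variances converge and the $L^2$-limit of real Gaussians is again Gaussian; thus $\inpro[H]{X(t)}{v}$ is Gaussian for every $v \in H$, which means $X(t)$ is Gaussian.

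The main obstacle is this last Gaussianity argument rather than the fixed point part: one must verify carefully that the drift integral preserves the joint Gaussianity of the iterates (not merely their marginal Gaussianity) and that Gaussianity survives the $L^2(\Omega,H)$ limit. The existence and uniqueness portion, by contrast, is essentially a verbatim transcription of the contraction already established for $K$, with the trace-class norm replaced by the $L^2(\Omega,H)$ norm and the It\^o isometry supplying the only genuinely new ingredient.
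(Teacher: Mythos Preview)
Your fixed-point argument for existence and uniqueness matches the paper's exactly; the difference is in how Gaussianity is established. The paper does not iterate the full map $\Psi$ but instead decomposes $X = X_1 + X_2$, where $X_1$ solves the equation with noise $B=0$ and initial value $\xi$, and $X_2$ solves it with $\xi = 0$ and the stochastic forcing retained. Two separate Picard sequences $(X_1^n)$ and $(X_2^n)$ are run. Each $X_1^n(t)$ is a bounded linear image of $\xi$, hence Gaussian; for $X_2^n$ the paper invokes the stochastic Fubini theorem to show inductively that there is a deterministic kernel $\psi_n$ with $X_2^n(t) = \int_0^t \psi_n(t,s)\,\dd W(s)$, which is manifestly Gaussian. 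Since $\xi$ and $W$ are independent, $X_1^n(t) \oplus X_2^n(t)$ is jointly Gaussian, and the $L^2$-limit inherits this.

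Your route---carrying a joint-Gaussianity hypothesis on the full iterates $(X_{n-1}(s))_{s}$ together with $(\xi,W)$ through the Bochner integral via Riemann sums---also works, and you correctly identify the delicate point: the induction hypothesis must be joint, not marginal, so that the three summands in $\Psi X_{n-1}$ are jointly Gaussian. What the paper's decomposition buys is precisely the avoidance of this bookkeeping: by separating the $\xi$-dependence from the $W$-dependence and appealing to their independence, joint Gaussianity comes for free, and the stochastic Fubini step gives an explicit Gaussian representation of $X_2^n(t)$ rather than relying on a Riemann-sum limit argument. Your approach is a bit more elementary in that it avoids stochastic Fubini, at the cost of a heavier induction hypothesis.
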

	\begin{proof}
		In the case that $S$ is a semigroup, $\xi$ is deterministic and $F=0$, the result is well-known, see, e.g., \cite[Theorem~5.2]{DPZ14}. We only sketch the proof in our general case. Existence and uniqueness of a solution to~\eqref{eq:X} follow from a Banach fixed point theorem as in Proposition~\ref{prop:Kexists}, using the It\^o isometry for the stochastic integral \cite[Theorem~2.25]{K14}. In particular, we have existence and uniqueness of the process
		\begin{equation*}
		X_1 = {S}(\cdot) \xi + \int_{0}^{\cdot} {S}(\cdot-s) F X_1(s) \dd s
		\end{equation*}
		in $\cC([0,T], L^2(\Omega,H))$ and of the process
		\begin{equation*}
		X_2 = \int_{0}^{\cdot} {S}(\cdot-s) F X_2(s) \dd s + \int_0^\cdot {S}(\cdot-s) B \dd W(s)
		\end{equation*}
		in $\cC([0,T], L^2(\Omega,H))$ as limits of iterative sequences $(X^n_1)_{n=0}^\infty$ and $(X^n_2)_{n=0}^\infty$ as in the proof of Proposition~\ref{prop:Kexists}, with $X^0_1 = X^0_2 = 0$. Since $X_1^n(t)$ is obtained from a linear and bounded transformation of $\xi$, it is Gaussian for each $t \in [0,T]$. For $X_2^n$, one can use an inductive argument along with the stochastic Fubini theorem \cite[Theorem~4.18]{K14} to see that there is a function $\psi_n \colon [0,T] \times [0,T] \to \cL_2(U_0,H)$, continuous in each argument, with $\sup_{t \in [0,T]} \norm{\psi_n(t,\cdot)}{L^2([0,t],\cL_2(U_0,H))} < \infty$ such that $X_2^n(t) = \int^t_0 \psi_n(t,s) \dd W(s)$ for all $t \in [0,T]$. Therefore $X_2^n(t)$ is also Gaussian for each $t \in [0,T]$. Since $\lim_n X_1^n(t) \oplus X^{n}_2(t) = X_1(t) \oplus X_2(t)$ in $L^2(\Omega,H \oplus H)$, $(X_1, X_2)$ is a jointly Gaussian pair, from which the result follows. 
	\end{proof}
	
	We now prove our main result, connecting the equations~\eqref{eq:C} and~\eqref{eq:X}. For this we need to assume that $S$ is a $C_0$-semigroup. This is required by the main tool of our proof, the mild It\^o formula \cite[Theorem~1]{DPJR19}. In our setting, this formula gives that for a twice continuously Fr\'echet differentiable functional $\varphi$ and $t \in [0,T]$,
	\begin{align*}
	\notag \varphi(X(t)) &= \varphi(S(t)\xi) + \int^t_0 \inpro[H]{\varphi'(S(t-s)X(s))}{S(t-s) FX(s)} \dd s \\ &\qquad+ \int^t_0 \inpro[H]{\varphi'(S(t-s)X(s))}{S(t-s) B \cdot} \dd W(s) \\ &\qquad+ \frac{1}{2} \sum_{j = 1}^\infty \int^t_0 \inpro[H]{\varphi''(S(t-s)X(s)) S(t-s) B f_j}{S(t-s) B f_j}.
	\end{align*}
	Here the Fr\'echet derivatives $\varphi'$ and $\varphi''$ take, by the Riesz representation theorem, values in $H$ and $\Sigma(H)$, respectively, and $(f_j)_{j= 1}^\infty$ is an orthonormal basis of $U_0$. The formula is proven by applying the standard It\^o formula to~\eqref{eq:X} with $t$ fixed in the integrand, and then using the semigroup property of $S$ to relate the process with $t$ fixed to the original process $X$. 
	
	\begin{theorem}
		\label{thm:covarianceisK}
		Let $\Cov(\xi) = Q_\xi$ and let $S$ be a $C_0$-semigroup satisfying Assumption~\ref{assumptions:1}. Then, the process $K = (\Cov(X(t)))_{t \in [0,T]}$, where $X$ is given by~\eqref{eq:X}, is the solution of~\eqref{eq:C}.
	\end{theorem}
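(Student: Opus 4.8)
The plan is to verify directly that $t \mapsto \Cov(X(t))$ solves~\eqref{eq:C} and then invoke the uniqueness from Proposition~\ref{prop:Kexists}. First I would reduce to the centered case: writing $m(t) = \E[X(t)]$ and subtracting the deterministic equation that $m$ satisfies from~\eqref{eq:X}, the process $Y = X - m$ again solves~\eqref{eq:X} but with centered Gaussian initial value $\xi - \E[\xi]$, whose covariance is still $Q_\xi$. Since $\Cov(X(t)) = \Cov(Y(t)) = \E[Y(t) \otimes Y(t)]$, I may assume $\E[\xi] = 0$ and work with $\E[X(t) \otimes X(t)]$.

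The heart of the argument, and the role played by the mild It\^o formula of~\cite{DPJR19}, is to fix $t \in [0,T]$ and propagate the solution forward by setting $Z(s) = S(t-s)X(s)$ for $s \in [0,t]$. Substituting the mild form~\eqref{eq:X} of $X(s)$ and using the semigroup identity $S(t-s)S(s-\sigma) = S(t-\sigma)$, the unbounded generator cancels and $Z$ becomes an ordinary $H$-valued It\^o process,
\begin{equation*}
Z(s) = S(t)\xi + \int_0^s S(t-\sigma) F X(\sigma) \dd \sigma + \int_0^s S(t-\sigma) B \dd W(\sigma),
\end{equation*}
with $Z(0) = S(t)\xi$ and $Z(t) = X(t)$. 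Assumption~\ref{assumptions:1} and $X \in \cC([0,T],L^2(\Omega,H))$ (Proposition~\ref{prop:Xexists}) ensure that the drift is Bochner integrable and that $s \mapsto S(t-s)B$ is square integrable in $\cL_2(U_0,H)$, so $Z$ is a genuine semimartingale and its stochastic integral part is a martingale.

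I would then apply It\^o's formula to the quadratic map $x \mapsto x \otimes x$ along $Z$; to sidestep operator-valued calculus it is cleanest to test against fixed $u,v \in H$ and apply the scalar It\^o formula to $\inpro[H]{Z(s)}{v}\inpro[H]{Z(s)}{u}$. Taking expectations annihilates the martingale terms and, after interchanging expectation and the time integral via Fubini (licensed by the integrability bounds above), each resulting term is recognized as a covariance through the identity $\inpro[H]{\Cov(P,P')u}{v} = \Cov(\inpro[H]{P}{v},\inpro[H]{P'}{u})$ and the rule $\Cov(\Gamma_1 P, \Gamma_2 P') = \Gamma_1 \Cov(P,P')\Gamma_2^*$, which follows from~\eqref{eq:HSlinops}. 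With $Z(s) = S(t-s)X(s)$ and $K(s) = \Cov(X(s))$, the initial contribution becomes $S(t)Q_\xi S(t)^*$, the two It\^o drift contributions become $S(t-s)K(s)(S(t-s)F)^*$ and $S(t-s)FK(s)S(t-s)^*$, and the quadratic-variation contribution becomes $S(t-s)B(S(t-s)B)^*$ under the $U_0$-It\^o isometry. Since $u,v$ are arbitrary, $\Cov(X(t))$ satisfies~\eqref{eq:C} for every $t$.

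To finish, $\Cov(X(\cdot)) \in \cC([0,T],\cL_1(H))$ by the estimate leading to~\eqref{eq:introcovboundedbystrong} together with the continuity of $X$ in $L^2(\Omega,H)$, so Proposition~\ref{prop:Kexists} identifies it as the unique solution of~\eqref{eq:C}. I expect the main obstacle to be the rigorous justification of the It\^o step rather than the algebra: one must confirm that $Z$ really is an It\^o process in $H$ (this is precisely where the semigroup property eliminates the unbounded operator and where~\cite{DPJR19} is needed), and that the bracket of $\int_0^\cdot S(t-s)B \dd W(s)$ yields exactly $S(t-s)B(S(t-s)B)^* \dd s$; the Fubini interchange then rests on the bounds in Assumption~\ref{assumptions:1}.
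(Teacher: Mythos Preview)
Your proposal is correct and follows essentially the same strategy as the paper: apply the mild It\^o formula of~\cite{DPJR19} to the bilinear functional $\langle\cdot,u\rangle_H\langle\cdot,v\rangle_H$, take expectations, identify each term as a component of~\eqref{eq:C}, and conclude by the uniqueness in Proposition~\ref{prop:Kexists}. Your explicit rewriting $Z(s)=S(t-s)X(s)$ as an ordinary It\^o process is precisely the mechanism behind the mild It\^o formula, so this is the same argument unpacked rather than invoked.

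The one place where you diverge is the treatment of the initial condition. The paper first proves the case $\xi=0$, then handles general $\xi$ by decomposing $X=X_1+X_2$ into a noiseless part carrying $\xi$ and a zero-initial-value part carrying the stochastic integral, and argues that the cross-covariances $\Cov(X_1(t),X_2(t))$ vanish via the approximating sequences from Proposition~\ref{prop:Xexists}. Your reduction---subtracting the deterministic mean $m(t)=\E[X(t)]$ so that the centered process again satisfies~\eqref{eq:X} with initial value $\xi-\E[\xi]$ of covariance $Q_\xi$---is cleaner and avoids the cross-covariance step entirely. Both routes are valid; yours is shorter.
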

	\begin{proof}
		We first suppose that $X(0) = \xi = 0$ so that $\Cov(X(t)) = \E[X(t) \otimes X(t)]$. By an argument analogous to~\eqref{eq:introcovboundedbystrong}, the continuity of $X$ implies that $\Cov(X(\cdot)) \in \cC([0,T],\cL_1(H))$. Below, we will make several interchanges of integration, summation and expectation. These are allowed by Fubini's theorem, using Assumption~\ref{assumptions:1} and the fact that~$\sup_{t \in [0,T]}\norm{X(t)}{L^2(\Omega,H)} < \infty$. Let $(e_j)_{j = 1}^\infty$ and $(f_j)_{j= 1}^\infty$ be orthonormal bases of $H$ and $U_0$, respectively. 
		By~Assumption~\ref{assumptions:1}, the mild It\^o formula is applicable with the functional $\varphi$ given by $\varphi(x) = \inpro[H]{x}{e_i}\inpro[H]{x}{e_j}$ for $x \in H$, $i,j \in \N$. We have $\varphi'(x) = \inpro[H]{x}{e_i}e_j + \inpro[H]{x}{e_j}e_i$ and $\varphi''(x) = e_i \otimes e_j + e_j \otimes e_i$. Using also the zero expectation property of the It\^o integral and~\eqref{eq:HSinpro}, we find that 
		\begingroup
		\allowdisplaybreaks
		\begin{align*}
		\inpro[\cL_2]{\Cov(X(t))}{e_i \otimes e_j} &= \E[\inpro[H]{X(t)}{e_i}\inpro[H]{X(t)}{e_j}] \\ 
		&= \int^t_0 \E[\inpro[H]{S(t-s)FX(s)}{e_i} \inpro[H]{S(t-s)X(s)}{e_j}] \dd s \\
		&\quad+ \int^t_0 \E[\inpro[H]{S(t-s)X(s)}{e_i} \inpro[H]{S(t-s)FX(s)}{e_j}] \dd s\\
		&\quad+\sum_{n=1}^\infty \int^t_0 \inpro[H]{S(t-s)B f_n}{e_i} \inpro[H]{S(t-s)B f_n}{e_j} \dd s,
		\end{align*}
		for $t \in [0,T], i,j \in \N$. By~\eqref{eq:HSinpro},~\eqref{eq:HSlinops} and the definition of~$\inpro[U_0]{\cdot}{\cdot}$, this is equal to
		\begin{align*} 
		&\int^t_0 \inpro[\cL_2(H)]{\E[S(t-s)F X(s) \otimes S(t-s)X(s)]}{e_i \otimes e_j} \dd s \\ 
		&\quad+\int^t_0 \inpro[\cL_2(H)]{\E[S(t-s) X(s) \otimes S(t-s)F X(s)]}{e_i \otimes e_j} \dd s \\
		&\quad+ \int^t_0 \inpro[U_0]{(S(t-s)B)^*e_i}{(S(t-s)B)^*e_j} \dd s \\
		&= \int^t_0 \inpro[\cL_2(H)]{S(t-s)F\Cov(X(s))S(t-s)^*}{e_i \otimes e_j} \dd s \\
		&\quad+ \int^t_0 \inpro[\cL_2(H)]{S(t-s)\Cov(X(s))(S(t-s)F)^*}{e_i \otimes e_j} \dd s \\
		&\quad+ \int^t_0 \inpro[\cL_2(H)]{S(t-s)B(S(t-s)B)^*}{e_i \otimes e_j} \dd s.
		\end{align*}%
		\endgroup
		By applying $\inpro[H]{\cdot}{e_i\otimes e_j}$ to~\eqref{eq:C} and using~\eqref{eq:HSinpro}, we obtain similarly that
		\begin{align*}
		\inpro[\cL_2(H)]{K(t)}{e_i \otimes e_j} &= \int^t_0 \inpro[\cL_2(H)]{S(t-s)FK(s)S(t-s)^*}{e_i \otimes e_j} \dd s \\
		&\quad+ \int^t_0 \inpro[\cL_2(H)]{S(t-s)K(s)(S(t-s)F)^*}{e_i \otimes e_j} \dd s \\
		&\quad+ \int^t_0 \inpro[\cL_2(H)]{S(t-s)B(S(t-s)B)^*}{e_i \otimes e_j} \dd s.
		\end{align*}
		Combining this with the previous result yields
		\begin{align*}
		&\inpro[\cL_2(H)]{K(t)-\Cov(X(t))}{e_i \otimes e_j} \\
		&\quad= \int^t_0 \inpro[\cL_2(H)]{S(t-s)F\left(K(s)-\Cov(X(s))\right)S(t-s)^*}{e_i \otimes e_j} \dd s \\
		&\quad\quad+ \int^t_0 \inpro[\cL_2(H)]{S(t-s)\left(K(s)-\Cov(X(s))\right)(S(t-s)F)^*}{e_i \otimes e_j} \dd s.
		\end{align*}
		This shows that 
		\begin{align*}
		K(t) - \Cov(X(t)) &= \sum_{i,j = 1}^{\infty} \inpro[\cL_2(H)]{K(t)-\Cov(X(t))}{e_i \otimes e_j} e_i \otimes e_j \\
		&=\int_{0}^{t}  S(t-s) F \left(K(s)-\Cov(X(s))\right)  S(t-s)^* \\
		&\quad \qquad+  S(t-s) \left(K(s)-\Cov(X(s))\right) ( S(t-s) F)^* \dd s
		\end{align*}
		so that $K(t)  = \Cov(X(t))$ for all $t \in [0,T]$ by uniqueness of $K$ in~\eqref{eq:C}. 
		
		For the general case $\xi \neq 0$, we write $X = X_1 + X_2$ as in the proof of Proposition~\ref{prop:Xexists}. From the fact that $X_1(t) \oplus X_2(t)=\lim_n X_1^n(t) \oplus X^{n}_2(t)$ in $L^2(\Omega,H \oplus H)$, we find that $\Cov(\inpro[H]{X_1(t)}{u},\inpro[H]{X_2(t)}{v}) = \lim_n \Cov(\inpro[H]{X^n_1(t)}{u},\inpro[H]{X^n_2(t)}{v}) = 0$.
		This implies that $\Cov(X_1(t),X_2(t)) = \Cov(X_2(t),X_1(t)) = 0$ for arbitrary $t \in [0,T]$. Since $X_2(0) = 0$, 
		\begin{align*}
		\Cov(X_2(t)) &= \int_{0}^{t}  S(t-s) F \Cov(X_2(s))  S(t-s)^*+  S(t-s) \Cov(X_2(s)) ( S(t-s) F)^* \dd s \\ 
		&\quad + \int_{0}^{t} S(t-s) B ( S(t-s) B)^* \dd s
		\end{align*}
		as a consequence of what we have already shown. A similar argument using the mild It\^o formula yields
		\begin{align*}
		\Cov(X_1(t)) &= S(t) Q_\xi S(t)^* \\
		&\quad+  \int_{0}^{t}  S(t-s) F \Cov(X_1(s))  S(t-s)^*+  S(t-s) \Cov(X_1(s)) ( S(t-s) F)^* \dd s
		\end{align*}
		for all $t \in [0,T]$. The proof is completed by noting that 
		\begin{equation*}
		\Cov(X_1(t) + X_2(t)) = \Cov(X_1(t)) + \Cov(X_1(t),X_2(t)) + \Cov(X_2(t),X_1(t)) + \Cov(X_2(t)). \qedhere
		\end{equation*}
	\end{proof}
	
	\begin{remark}
		As a consequence of the theorem, $K(t) \in \Sigma^+(H)$ for all $t \in [0,T]$. %
	\end{remark}
	
	We finish this section with a general error decomposition formula with respect to the integral equation~\eqref{eq:C} and an approximation of the semigroup $S$. For this we consider a family $\tilde S = (\tilde S(t))_{t \in [0,T]}$ of operators in $\cL(\hat{H},H)$ such that the mappings $t \mapsto  \tilde S(t) v \in H$,  $t \mapsto  \tilde S(t) F v \in H$ and $t \mapsto  \tilde S(t) B \in \cL_2(U_0,H)$ are continuous almost everywhere on $[0,T]$ for all $v \in H$. If~$\tilde S$ also satisfies~Assumption~\ref{assumptions:1} and we consider a function $\hat K : [0,T] \to \cL_1(H) \cap \Sigma(H))$ that is continuous almost everywhere on~$[0,T]$, then $\tilde K(t)$, given by 
	\begin{equation}
	\label{eq:tilde-K}
	\begin{split}
	\tilde{K}(t) &= \tilde S(t) Q_\xi \tilde S(t)^* + \int_{0}^{t}  \tilde S(t-s) F \hat K(s)  \tilde S(t-s)^*+  \tilde S(t-s) \hat K(s) ( \tilde  S(t-s) F)^* \dd s \\ 
	&\quad + \int_{0}^{t} \tilde S(t-s) B ( \tilde S(t-s) B)^* \dd s,
	\end{split}
	\end{equation}	
	is well-defined since the integrands are measurable mappings with values in the separable Banach space $\cL_1(H)$. In the error decomposition formula we consider $\tilde K(t)$, defined by~\eqref{eq:tilde-K}, as an approximation of $K(t)$, $t \in [0,T]$.
	
	\begin{proposition}
		\label{prop:Kerr}
		Let, for $t \in [0,T]$, $K(t)$ be given by~\eqref{eq:C} and $\tilde K(t)$ by~\eqref{eq:tilde-K}. Then, with $\cO^+(s) = S(s) + \tilde{S}(s)$ and $\cO^-(s) = S(s) - \tilde{S}(s)$ for $s\in[0,t]$,
		\begingroup
		\allowdisplaybreaks
		\begin{align*}
		\norm{K(t)-\tilde K(t)}{\cL_i(H)} &\le \norm{\cO^-(t) Q_\xi \cO^+(t)}{\cL_i(H)} \\
		&\quad+ 2 \int^t_0 \norm{S(t-s) F (K(s)-\hat K(s))S(t-s)^*}{\cL_i(H)} \dd s \\
		&\quad+ \int^t_0 \norm{\cO^-(t-s) F \hat K(s)(\cO^+(t-s))^*}{\cL_i(H)} \dd s \\ 
		&\quad+ \int^t_0 \norm{\cO^+(t-s) F \hat K(s)(\cO^-(t-s))^*}{\cL_i(H)} \dd s \\
		&\quad+ \int^t_0 \norm{\cO^-(t-s) B (\cO^+(t-s)B)^*}{\cL_i(H)} \dd s \text{ for } i \in \{1,2\}.
		\end{align*}%
		\endgroup
	\end{proposition}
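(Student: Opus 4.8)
The plan is to subtract \eqref{eq:tilde-K} from \eqref{eq:C} and group the difference $K(t)-\tilde K(t)$ into three pieces: the initial-data term $S(t)Q_\xi S(t)^* - \tilde S(t) Q_\xi \tilde S(t)^*$, the $s$-integral of the $F$-terms, and the $s$-integral of the $B$-terms. To each piece I would apply an algebraic \emph{polarization identity} for differences of sandwiched products. In its symmetric form, for bounded operators $A,\tilde A$ and any $X$,
\[
A X A^* - \tilde A X \tilde A^* = \tfrac12\bigl[(A+\tilde A)X(A-\tilde A)^* + (A-\tilde A)X(A+\tilde A)^*\bigr],
\]
which is verified by expanding the right-hand side. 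Taking $A = S(s)$, $\tilde A = \tilde S(s)$ turns $A\pm\tilde A$ into $\cO^\pm(s)$, so each difference is rewritten purely in terms of $\cO^\pm$.

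The key observation is that whenever $X$ is symmetric the two summands are mutual adjoints, since $(\cO^+(s) X \cO^-(s)^*)^* = \cO^-(s) X \cO^+(s)^*$; by \eqref{eq:schattenadjoint} they then carry the same $\cL_i(H)$-norm, so the triangle inequality collapses the factor $\tfrac12$ into a single term. Applied with $X = Q_\xi \in \Sigma^+(H)$ this yields the initial-data bound $\norm{\cO^-(t) Q_\xi \cO^+(t)^*}{\cL_i(H)}$, and applied with $A = S(t-s)B$, $\tilde A = \tilde S(t-s)B$ (so that $A\pm\tilde A = \cO^\pm(t-s)B$) it yields $\norm{\cO^-(t-s) B (\cO^+(t-s)B)^*}{\cL_i(H)}$ for the $B$-integrand; integrating in $s$ via the Bochner bound $\norm{\int_0^t \cdot \dd s}{\cL_i(H)} \le \int_0^t \norm{\cdot}{\cL_i(H)} \dd s$ produces the corresponding terms of the claim.

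The $F$-terms are the crux, because $F$ sits on only one side of each product and the middle operators differ ($K(s)$ in \eqref{eq:C} versus $\hat K(s)$ in \eqref{eq:tilde-K}). For the first of them I would insert $S(t-s)F\hat K(s)S(t-s)^*$ and split (suppressing arguments)
\[
S F K S^* - \tilde S F \hat K \tilde S^* = S F(K-\hat K)S^* + \bigl(S F \hat K S^* - \tilde S F \hat K \tilde S^*\bigr).
\]
The first summand already matches the claimed bound. For the second I use the bilinear polarization identity $A X C^* - \tilde A X \tilde C^* = \tfrac12\bigl[\cO^+_A X (\cO^-_C)^* + \cO^-_A X (\cO^+_C)^*\bigr]$, where $\cO^\pm_A = A\pm\tilde A$ and $\cO^\pm_C = C\pm\tilde C$, taken with $A = SF$, $C = S$ so that $\cO^\pm_A = \cO^\pm F$ and $\cO^\pm_C = \cO^\pm$. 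The second $F$-term $S K (SF)^* - \tilde S \hat K (\tilde S F)^*$ is handled identically after inserting $S\hat K(SF)^*$, now with $A = S$, $C = SF$ (the roles of the left and right factors swapped).

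The final bookkeeping rests on symmetry. Since $K(s),\hat K(s) \in \Sigma(H)$ (Proposition~\ref{prop:Kexists} and the standing hypothesis on $\hat K$), the term $S(K-\hat K)(SF)^*$ coming from the second $F$-term is the adjoint of $SF(K-\hat K)S^*$ from the first; \eqref{eq:schattenadjoint} equates their norms and so supplies the factor $2$ in front of $\int_0^t \norm{S(t-s)F(K(s)-\hat K(s))S(t-s)^*}{\cL_i(H)}\dd s$. Likewise, the polarized remainders from the two $F$-terms are mutual adjoints, so again the factor $\tfrac12$ disappears and exactly the two terms $\norm{\cO^-(t-s)F\hat K(s)(\cO^+(t-s))^*}{\cL_i(H)}$ and $\norm{\cO^+(t-s)F\hat K(s)(\cO^-(t-s))^*}{\cL_i(H)}$ survive. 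Collecting the three groups with the triangle inequality and the Bochner-norm bound gives the stated estimate; finiteness of every term follows from the ideal property \eqref{eq:schatten_bound_1}, the product bound \eqref{eq:schatten_bound_2}, and Assumption~\ref{assumptions:1}. I expect the main difficulty to be purely organizational: tracking the one-sided placement of $F$ together with the two distinct middle operators so that the adjoint pairings line up to reproduce precisely the five terms of the statement.
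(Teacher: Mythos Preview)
Your proposal is correct and follows essentially the same route as the paper: the paper's proof simply invokes the triangle inequality, the adjoint norm equality~\eqref{eq:schattenadjoint}, the symmetry of the middle operators, and the polarization identity $\Gamma_1 \tilde\Gamma \Gamma_1^* - \Gamma_2 \tilde\Gamma \Gamma_2^* = \tfrac12\bigl[(\Gamma_1+\Gamma_2)\tilde\Gamma(\Gamma_1-\Gamma_2)^* + (\Gamma_1-\Gamma_2)\tilde\Gamma(\Gamma_1+\Gamma_2)^*\bigr]$, which is exactly the identity you state and apply. You have correctly expanded the organizational bookkeeping (the insertion of $SF\hat K S^*$ and the adjoint pairing of the four polarized $F$-remainders) that the paper leaves implicit.
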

	\begin{proof}
		The proposition is a straightforward consequence of the triangle inequality,~\eqref{eq:schattenadjoint}, the fact that $K(t), \tilde K(t) \in \Sigma(H)$ for $t \in [0,T]$, and the identity
		\begin{equation}
		\label{eq:operator-identity}
		\Gamma_1 \tilde \Gamma \Gamma_1^* - \Gamma_2 \tilde \Gamma \Gamma_2^* = \frac{1}{2} \left( (\Gamma_1 + \Gamma_2) \tilde \Gamma (\Gamma_1 - \Gamma_2)^* + (\Gamma_1 - \Gamma_2) \tilde \Gamma (\Gamma_1 + \Gamma_2)^* \right)
		\end{equation}
		for $\Gamma_1, \Gamma_2, \tilde\Gamma \in \cL(H)$.
	\end{proof}
	
	\section{Applications}
	\label{sec:applications}
	
	We apply the theory of the previous section to two concrete stochastic equations, a stochastic advection-diffusion equation and the stochastic wave equation. Fully discrete approximation schemes are analysed and numerical simulations are provided for illustration.
	
	\subsection{A stochastic advection--diffusion equation}
	\label{sec:heat}
	Let $\cD \subset \R^d$, $d = 1, 2, 3$ be a bounded domain. The SPDE we consider in this section is formally given by
	\begin{equation}
	\label{eq:advdiff}
	\begin{alignedat}{2}		
	\dd X(t,x) + \cA X(t,x) \dd t&= \dd W(t,x) &&\text{ for } t \in (0,T], x \in \cD, \\
	X(0,x) &= \xi(x),   &&\text{ for } x \in \cD,
	\end{alignedat}
	\end{equation}
	for a random initial condition $X(0) = \xi$ and an operator
	\begin{equation*}
	\cA = -\sum^{d}_{i,j=1} \frac{\partial}{\partial x_i} a_{i,j}  \frac{\partial}{\partial x_j}  + \sum_{j = 1}^d a_j \frac{\partial}{\partial x_j} + a_0.
	\end{equation*}
	We consider either Dirichlet, Neumann or Robin boundary conditions and we let $W$ be a generalized Wiener process in $H = L^2(\cD)$ with covariance operator $Q \in \Sigma^+(H)$.  We assume that $\xi$ is an $H$-valued $\cF_0$-measurable Gaussian random variable with covariance $Q_\xi$. The coefficients $a_{i,j}, a_{i}$, $i,j = 1, \ldots, d$, and $a_0$ are functions on $\bar{\cD}$ fulfilling $a_{i,j} = a_{j,i}$. We assume that for there is some $\lambda_0 > 0$ such that $\sum^{d}_{i,j=1} a_{i,j} (x) y_i y_j \ge \lambda_0 |y|^2$ for all $y \in \R^d$ and $x \in \bar{\cD}$, so that $\cA$ is elliptic.

	To put~\eqref{eq:advdiff} into our framework, we follow~\cite{FS91} and introduce the spaces $V$ and $\bH$ as subspaces of the Sobolev spaces $H^1 = H^1(\cD)$ and $H^2 = H^2(\cD)$, respectively. In the Dirichlet case we set $V = \bH = H^1_0 = \{u \in H^1 : u = 0 \text{ on } \partial \cD \}$.
	In the Robin case (Neumann boundary conditions being a special case thereof), we set $V =H^1(\cD)$ and $\bH = \{u \in H^2 : {\partial u}/{\partial \nu_\Lambda} + \sigma u = 0 \text{ on } \partial \cD \},$
	where $\sigma \colon \partial \cD \to \R$ is a sufficiently smooth function and  
	\begin{equation*}
	\frac{\partial u}{\partial \nu_\Lambda} = \sum_{i,j = 1}^d n_i a_{i,j} \frac{\partial u}{\partial x_j},
	\end{equation*}
	with $\mathbf{n} = (n_1, \ldots, n_d)$ being the outward unit normal to $\partial \cD$. Here and below, the boundary conditions should be understood in terms of trace operators, see~\cite[Sections 1.5-1.6]{G85}. We define a bilinear form $\lambda \colon V \times V \to \R$ associated with $\cA$ by
	\begin{equation*}
	\lambda(u,v) = \int_\cD  \sum^{d}_{i,j=1}  a_{i,j} \frac{\partial u}{\partial x_i} \frac{\partial v}{\partial x_j} + \sum_{j = 1}^d a_j \frac{\partial u}{\partial x_j} v  + a_0 u v \dd x + \int_{\partial \cD} \sigma u v \dd x,
	\end{equation*}
	where the last term is dropped in the Dirichlet case. If the coefficients are bounded, then $|\lambda(u,v)| \le \norm{u}{V} \norm{v}{V}$ so that we may associate an operator $\Lambda \colon V \to V^*$ to $\lambda$ by $\lambda(u,v) = {}_{V^*} \langle \Lambda u, v \rangle_V$. By Riesz's representation theorem, we obtain a Gelfand triple $V \subset H \subset V^*$. We restrict $\Lambda$ to $\dom(\Lambda) = \{u \in V : \Lambda u \in H\}$ without changing notation. 
	When $\cD$ has Lipschitz boundary, one may show (using the trace inequality~\cite[Theorem~1.5.1.3]{G85} if necessary) that there are two constants $\tilde \lambda_0 > 0$, $c_0 \ge 0$ such that $\lambda(u,u) \ge \tilde \lambda_0 \norm{u}{H^1}^2 - c_0 \norm{u}{H}^2$ for $u \in V$.
	We add the term $c_0 X(t,x)$ to both sides of~\eqref{eq:advdiff}. Then the associated bilinear form $a(\cdot,\cdot) = \lambda(\cdot,\cdot) + c_0 \inpro[H]{\cdot}{\cdot}$ is coercive. 
	In the case that $\sigma \ge 0$, the constant $c_0$ may, for example, be chosen as any number fulfilling
	\begin{equation}
	\label{eq:advdiff:c0}
	c_0 > \frac{\sup_{x \in \cD} \sum_{j = 1}^{d} |a_j(x)|}{4 \lambda_0 \epsilon} - \inf_{x \in \cD} a_0(x),
	\end{equation}
	for an arbitrary $\epsilon \in (0,1)$. In the Dirichlet case, we may pick $c_0 = 0$ if $a_j = 0$ for all $j = 1, \ldots, d$ and $a_0(x) \ge 0$. With this, the SPDE~\eqref{eq:advdiff} is put into the form of~\eqref{eq:introSPDE} by letting $A = \Lambda + c_0 I$ and $F = c_0 I$. 
	The adjoint operator $A^*$ is defined by associating it with the bilinear form $a^*$, given by $a^*(u,v) = a(v,u)$ for $u,v \in V$. For smooth coefficients, we use Green's formula to see that $A^*$ can be identified with the formal adjoint of $\cA$ perturbed by $c_0$ (cf.\ \cite[Section 2.1.3]{Y10}). It is given by 
	\begin{equation*}
	\cA^* = -\sum^{d}_{i,j=1} \frac{\partial}{\partial x_i} a_{i,j}  \frac{\partial}{\partial x_j} - \sum_{j = 1}^d a_j \frac{\partial}{\partial x_j} - \sum_{j = 1}^d \frac{\partial a_j}{\partial x_j} + a_0 + c_0.
	\end{equation*}
	In the Robin case, the boundary conditions of $\cA^*$ change to the ones of the space
	\begin{equation*}
	\bH^* = \left\{ u \in H^2 : \frac{\partial u}{\partial \nu_\Lambda} + \left(\sum_{j=1}^{d} a_j n_j + \sigma\right) u = 0 \text{ on } \partial \cD  \right\}, 
	\end{equation*}
	while in the Dirichlet case, $\bH^* = \bH$.
	We also need the symmetrized operator $A_0$ associated with the bilinear form $a_0 = (a+a^*)/2$. Like $A^*$, $A_0$ is identified with a differential operator 
	\begin{equation*}
	\cA_0 = -\sum^{d}_{i,j=1} \frac{\partial}{\partial x_i} a_{i,j}  \frac{\partial}{\partial x_j} - \frac{1}{2} \sum_{j = 1}^d \frac{\partial a_j}{\partial x_j} + a_0 + c_0.
	\end{equation*}
	We set
	\begin{equation*}
	\bH_0 = \left\{ u \in H^2 : \frac{\partial u}{\partial \nu_\Lambda} + \left(\frac{1}{2} \sum_{j=1}^{d} a_j n_j + \sigma\right) u = 0 \text{ on } \partial \cD  \right\}, 
	\end{equation*}
	in the Robin case and $\bH_0 = \bH$ in the Dirichlet case.
	With these notions in place, we introduce an assumption of elliptic regularity.
	\begin{assumption}
		The coefficients $a_{i,j}, a_j, a_0, \sigma$, $i,j=1,\ldots,d$, are sufficiently smooth and $\cD$ is sufficiently regular, with $\partial \cD$ at least Lipschitz, to guarantee that $\dom(A) = H^2 \cap \bH$, $\dom(A^*) = H^2 \cap \bH^*$ and $\dom(A_0) = H^2 \cap \bH_0$. The equalities hold with equivalence of $\norm{\cdot}{H^2}$ and the graph norms $\norm{A\cdot}{H}, \norm{A^*\cdot}{H}$ and $\norm{A_0\cdot}{H}$, respectively.
	\end{assumption}
	We refer to~\cite{G85} for details on when this assumption holds. 
	It is satisfied when $\cD$ is convex and $a_{i,j}, a_j, a_0, \sigma$, $i,j=1,\ldots,d$ are infinitely differentiable, see \cite{FS91}.
	
	Since $a$ is coercive, $A$ is a sectorial operator. Negative fractional powers of $A$ are therefore well-defined as elements of $\cL(H)$ given by
	\begin{equation*}
	A^{-\frac{s}{2}} = \frac{1}{2 \pi i} \int_\gamma \lambda^{-\frac{s}{2}} (\lambda-A)^{-1} \dd \lambda,
	\end{equation*}
	where $\gamma$ is a counterclockwise oriented contour surrounding the spectrum of $A$. Positive fractional powers are densely defined closed operators on $H$ defined by $A^{{s/2}} = (A^{-{s/2}})^{-1}$ \cite[Section~2.1.7]{Y10}. We note that $(A^*)^{s/2} = (A^{s/2})^*$ for all $s \in \R$. Moreover, by \cite[Theorem~3.1]{K61} and \cite[Th\'eor\`eme~6.1]{L62} (applicable since $\cD$ has Lipschitz boundary) we have $\dom(A^{s/2}) = \dom((A^*)^{s/2}) = \dom(A_0^{s/2})$ for all $s \in [0,1]$ with norm equivalence. For $s=1$, these spaces can also be identified with $V$. 
	
	It is convenient to express our regularity assumptions on $Q$ not in terms of fractional powers of $A$, which is usually the case when $A$ is self-adjoint, but of $A_0$. As $A_0$ is positive definite with a compact inverse (a consequence of~\cite[Theorem~1.38]{Y10} since $\dom(A_0) \subset H^2$), its fractional powers can be characterized in a simple way by the spectral theorem, cf.\ \cite[Appendix~B.2]{K14}. For $s\ge 0$, we write $\dot{H}^s$ for the Hilbert space $\dom(A_0^{{s/2}})$. Moreover, Hilbert spaces $\dot{H}^{-s}$ are well-defined as completions of sequences in $H$ with respect to~$\norm{\cdot}{\dot{H}^{-s}} = \norm{A_0^{-{s/2}}\cdot}{H}$. In this way, we obtain a set~$(\dot{H}^{s})_{s \in \R}$ of Hilbert spaces, with $\dot{H}^\alpha \hookrightarrow \dot{H}^s$ for $\alpha > s$, continuously and densely. Lemma~2.1 in~\cite{BKK20} allows us to, for all $\alpha, s \in \R$, extend $A_0^{{s/2}}$ to an operator in $\cL(\dot{H}^\alpha,\dot{H}^{\alpha-s})$, and we do so without changing notation. 
	Note, that for $v \in H$ and $s \in [0,1]$,
	\begin{align*}
	\norm{A_0^{-\frac{s}{2}} v}{H} = \sup_{\substack{w \in H \\ \norm{w}{H}=1}} \left|\inpro[H]{A_0^{-\frac{s}{2}} v}{w}\right| &= \sup_{\substack{w \in H \\ \norm{w}{H}=1}} \left|\inpro[H]{A^{-\frac{s}{2}}v}{(A^\frac{s}{2})^* A_0^{-\frac{s}{2}} w}\right| \\
	&\le \norm{A^{-\frac{s}{2}} v}{H} \sup_{\substack{w \in H \\ \norm{w}{H}=1}} \norm{(A^\frac{s}{2})^* A_0^{-\frac{s}{2}} w}{H} \lesssim \norm{A^{-\frac{s}{2}} v}{H},
	\end{align*}
	by the equivalence $\dom(A_0^{{s/2}}) = \dom((A^*)^{{s/2}})$. Similarly,
	\begin{equation}
	\label{eq:advdiff:symnormequiv2}
	\norm{A^{-\frac{s}{2}} v}{H} \lesssim \norm{A_0^{-\frac{s}{2}} v}{H}.
	\end{equation}
	This is true also for $A^*$. Therefore, $A^{{s/2}}$ and $(A^*)^{{s/2}}$ can be considered as operators in $\cL(\dot{H}^s,H)$ for $s \in [-1,1]$.
	
	The operator $-A$ is the infinitesimal generator of a uniformly bounded analytic semigroup $S$ on $H$ (see \cite{FS91}), which yields a mild solution~\eqref{eq:X} to the SPDE~\eqref{eq:advdiff}. The semigroup maps into $\dom(A^s)$ for all $s \ge 0$, with $S$ and $A^{s}$ commutative on $\dom(A^s)$. The stability estimate
	\begin{equation}
	\label{eq:advdiff:semistab}
	\norm{A^{s}S(t)}{\cL(H)} \lesssim t^{-s}
	\end{equation}
	is satisfied for $t > 0$. Moreover, as seen in \cite[Section~2.7.7]{Y10},
	\begin{align}
	\label{eq:advdiff:semiholder}
	\norm{A^{-s}(S(t)-I)}{\cL(H)} &\lesssim t^{s},  s \in [0,1], t \ge 0.
	\end{align}
	
	We now make the following assumption on $Q$.
	\begin{assumption}
		\label{ass:advdiff:ass1}
		There is a constant $r \in (0,1]$ such that
		\begin{equation*} 
		\norm{A_0^{\frac{r-1}{2}} Q^{\frac{1}{2}}}{\cL_2(H)} = \norm{Q^{\frac{1}{2}}}{\cL_2(H,\dot{H}^{r-1})} < \infty.
		\end{equation*}
	\end{assumption}  
	With this assumption in place, we have for $t>0$ that
	\begin{align*}
	\norm{S(t)}{\cL_2(U_0,H)} &= \norm{A^{\frac{1-r}{2}} S(t) A^{\frac{r-1}{2}} Q^\frac{1}{2}}{\cL_2(H)} \le \norm{A^{\frac{1-r}{2}} S(t)}{\cL(H)} \norm{A^{\frac{r-1}{2}} Q^\frac{1}{2}}{\cL_2(H)} \lesssim 
	t^{\frac{r-1}{2}}.
	\end{align*}
	The last inequality is a consequence of~\eqref{eq:advdiff:symnormequiv2} and~\eqref{eq:advdiff:semistab}.
	To have Assumption~\ref{assumptions:1} fulfilled, we set $B=I$ and $F=c_0 I$. Here, the inclusion $B$ is regarded as an operator in $U_0=Q^{1/2}(H)$ to $\hat{H} = \dot{H}^{r-1}$, $F$ as an operator from $H$ to $\dot{H}^{r-1}$ and $S(t)$ as an operator from $\dot{H}^{r-1}$ to $H$. 
	Continuity of $(0,T] \ni t \mapsto  S(t) B \in \cL_2(U_0,H)$ is a consequence of the fact $\norm{S(t)}{\cL_2(U_0,H)}<\infty$ for all $t \in (0,T]$ along with continuity of the mapping $[0,T] \ni t \mapsto S(t)v$ for all $v \in U_0 = Q^{1/2}(H) \subset H$. With this Assumption~\ref{assumptions:1} is fulfilled, so that by Proposition~\ref{prop:Xexists} and the fact that $S$ is a semigroup, a predictable mild solution to~\eqref{eq:introSPDE} exists. Proposition~\ref{prop:Kexists} and Theorem~\ref{thm:covarianceisK} yield a unique solution $K$ to~\eqref{eq:C} such that $K(t) = \Cov(X(t))$, $t \in [0,T]$.
	
	We now move on to approximation of~\eqref{eq:C}. For this, we consider the same approximation of $S$ as in~\cite{FS91} and assume from here on that $\cD$ is a convex polygon. For the spatial discretization, we let $(V_h)_{h \in (0,1]} \subset {H}^1$ be a standard family of finite element spaces consisting of piecewise linear polynomials with respect to a regular family of triangulations of $\cD$ with maximal mesh size $h$, vanishing on $\partial \cD$ in the Dirichlet case. We assume the mesh to be quasi-uniform. The spaces are equipped with $\inpro[V_h]{\cdot}{\cdot} = \inpro[H]{\cdot}{\cdot}$. On this space, let $A_h\colon V_h \to V_h$ be given by $\inpro[H]{A_h v_h}{u_h} = a(v_h,u_h)$
	for all $v_h, u_h \in V_h$. Since $A_h$ is sectorial, fractional powers of it are defined in the same way as for $A$. We write $A_{0,h}$ for the operator defined in the same way using the bilinear form $a_0$. Note that $A_h^*$ coincides with the operator defined using $a^*$. By $P_h \colon \dot{H}^{-1} \to V_h$ we denote the generalized orthogonal projector defined by $\inpro[H]{P_h x}{y_h} = \langle A_0^{-1/2} x, A_0^{1/2} y_h \rangle_H$ for $x \in \dot{H}^{-1}, y_h \in V_h$. Since $a_0$ is a symmetric form, we have $\norm{A_{0,h}^{1/2} P_h v}{H}=\norm{A_0^{1/2} P_h v}{H} \lesssim \norm{P_h v}{V}$. The mesh of $V_h$ is assumed to be quasi-uniform, so $\sup_{h \in (0,1]} \norm{P_h}{\cL(V)} < \infty$ \cite[Proposition~3.2]{FS91}, \cite[Example~3.6]{K14}. The interpolation arguments of~\cite{AL16} then imply that for $s \in [-1,1]$ there is a constant $C<\infty$ such that for all $h \in (0,1]$,
	\begin{equation}
	\label{eq:advdiff:eqnorms2}
	\norm{A_{0,h}^{\frac{s}{2}} P_h A_0^{-\frac{s}{2}}}{\cL(H)} \le C.
	\end{equation}
	Moreover, by \cite[Theorem~3.1]{K61} (see also the proof of \cite[Theorem~5.3]{FS91}), for all $s \in [0,1)$ there is a constant $C<\infty$ such that for all $h \in (0,1]$, $\norm{A_{h}^{\frac{s}{2}} A_{0,h}^{-\frac{s}{2}} P_h}{\cL(H)} \le C$
	and	 
	\begin{equation}
	\label{eq:advdiff:eqnorms3}
	\norm{A_{h}^{-\frac{s}{2}} A_{0,h}^{\frac{s}{2}} P_h}{\cL(H)} \le C.
	\end{equation}
	
	We use the backward Euler method for the temporal discretization of $S$. For a time step ${\Delta t}\in(0,1]$ let $(t_j)_{j\in \N_0}\subset\R$ be given by $t_j={\Delta t} j$ and $N_{\Delta t} +1 = \inf\{j\in\N : t_j \notin [0,T]\}$. We write $S_{h,{\Delta t}}=(I +{\Delta t} A_h)^{-1}$. The discrete family $(S_{h,{\Delta t}}^j)_{j\in\{0,\dots,N_{\Delta t}\}}$ of powers of $S_{h,{\Delta t}}$ acts as a fully discrete approximation of~$S$. For brevity, we write $S_{h,{\Delta t}}^j$ for $S_{h,{\Delta t}}^j P_h$. For all $s \in [0,1)$, there is a constant $C<\infty$ such that for all $h,{\Delta t} \in (0,1]$ and $j = 1, \ldots, N_{\Delta t}$, $
	\norm{A_h^{s} S_{h,{\Delta t}}^j}{\cL(H)} \le C t_j^{-s}$,
	see~\cite[(8.7)]{FS91}. We define an interpolation $\tilde{S}_{h,\Delta t} \colon [0,T] \to \cL(\cH)$ of $(S_{h,{\Delta t}}^j)_{j\in\{0,\dots,N_{\Delta t}\}}$ by
	\begin{equation}
	\label{eq:semigroup_approximation_interpolation}
	\tilde{S}_{h,\Delta t}(t) = \chi_{\{0\}}(t) P_h + \sum_{j=1}^{N_{\Delta t}} \chi_{(t_{j-1},t_j]}(t) S^j_{h,\Delta t}, t \in [0,T],
	\end{equation}
	where $\chi$ denotes the indicator function. We immediately obtain that for all $s \in [0,1)$ there is a constant $C<\infty$ such that for all $h,{\Delta t} \in (0,1]$ and $t \in (0,T]$
	\begin{equation}
	\label{eq:advdiff:discsemistab}
	\norm{A_h^{s} \tilde{S}_{h,\Delta t}(t)}{\cL(H)} \le C t^{-s}.
	\end{equation} 
	
	The next lemma gives an error bound for the semigroup approximation. In the Dirichlet case, a proof is given in~\cite[Lemma~5.1]{AKL16}. In our general case, it is a consequence of the split 
	\begin{align*}
	&\norm{(S(t)-\tilde S_{h,\Delta t}(t))v}{H} \\
	&\quad\le \norm{(S(t)-S(t_j))v}{H} + \norm{(S(t_j)-S_{h}(t_j)P_h)v}{H} + \norm{(S_h(t_j)P_h-\tilde S_{h,\Delta t}(t_j))v}{H}
	\end{align*}
	for $t \in (t_{j-1},t_j]$, $j = 1, \ldots, N_{\Delta t}$, and $v \in H$. Here $S_h$ is the semigroup on $V_h$ generated by $A_h$ \cite[Section~7]{FS91}. The first term can be bounded by~\eqref{eq:advdiff:semistab}, \eqref{eq:advdiff:semiholder} and~\eqref{eq:advdiff:symnormequiv2} and the third by using an integral representation of the error as in the proof of \cite[Theorem~8.2]{FS91}. For the second term, one employs a similar interpolation argument as in~\cite[Lemma~5.1]{AKL16}, making use of~\cite[Theorem~7.1]{FS91} and~\cite[Lemma~3.8(iii)]{K14}. The latter result is, again, proven for Dirichlet boundary conditions, but the arguments are the same in our general case, cf.\ \cite[Remark~7.2]{FS91}.
	
	\begin{lemma}
		\label{thm:advdiff:semigrouperror}
		For all $\theta \in [0,2]$ and $s \in [0,2-\theta] \cap [0,1)$, there is a constant $C<\infty$ such that for all $h, \Delta t \in (0,1]$ and $t>0$
		\begin{equation*}
		\norm{S(t)-\tilde S_{h,\Delta t}(t)}{\cL(\dot{H}^{-s},H)}  = \norm{(S(t)-\tilde S_{h,\Delta t}(t))A_0^{\frac{s}{2}}}{\cL(H)} \le C (h^{\theta} + {\Delta t}^{\frac{\theta}{2}}) t^{-\frac{\theta + s}{2}}.
		\end{equation*}
	\end{lemma}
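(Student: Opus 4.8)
The plan is to prove this semigroup approximation error bound by following the three-term split already indicated in the paragraph preceding the lemma statement, namely the decomposition
\begin{equation*}
S(t)-\tilde S_{h,\Delta t}(t) = \bigl(S(t)-S(t_j)\bigr) + \bigl(S(t_j)-S_h(t_j)P_h\bigr) + \bigl(S_h(t_j)P_h-\tilde S_{h,\Delta t}(t_j)\bigr),
\end{equation*}
valid for $t \in (t_{j-1},t_j]$, and to bound each of the three contributions, composed on the right with $A_0^{s/2}$, by $C(h^\theta + {\Delta t}^{\theta/2}) t^{-(\theta+s)/2}$. Throughout I would freely use the norm equivalence~\eqref{eq:advdiff:symnormequiv2} (and its analogues for $A^*$ and $A_0$) to pass between fractional powers of $A$, $A^*$ and $A_0$ at the cost of a generic constant, so that the cleanest operator for each estimate may be chosen.

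First I would handle the temporal-smoothness term $(S(t)-S(t_j))A_0^{s/2}$. Writing $S(t)-S(t_j) = (S(t-t_j)-I)S(t_j)$ and inserting $A^{-\theta/2}A^{\theta/2}$, I would apply the H\"older-type stability~\eqref{eq:advdiff:semistab} to $A^{(\theta+s)/2}S(t_j)$, which contributes the factor $t_j^{-(\theta+s)/2}$, and the H\"older estimate~\eqref{eq:advdiff:semiholder} to $A^{-\theta/2}(S(t-t_j)-I)$, which contributes $(t-t_j)^{\theta/2} \le {\Delta t}^{\theta/2}$. Since $t \in (t_{j-1},t_j]$ gives $t_j \asymp t$ (up to the usual quasi-uniform caveat $t_j \le t + {\Delta t} \lesssim t$ for $j\ge 1$, and the $j=1$ boundary behaviour handled separately by the singularity being integrable), this yields the required ${\Delta t}^{\theta/2} t^{-(\theta+s)/2}$ bound; the constraint $s \in [0,1)$ is exactly what keeps~\eqref{eq:advdiff:semistab} applicable, and $\theta \le 2$ together with $s \le 2-\theta$ keeps~\eqref{eq:advdiff:semiholder} in its admissible range $[0,1]$ after splitting $\theta/2$ appropriately.

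For the spatial error term $(S(t_j)-S_h(t_j)P_h)A_0^{s/2}$, I would invoke the elliptic/parabolic finite element estimate~\cite[Theorem~7.1]{FS91} together with the smoothing-and-interpolation argument of~\cite[Lemma~5.1]{AKL16}: the key nonsmooth-data bound is of the form $\norm{(S(t)-S_h(t)P_h)v}{H} \lesssim h^\theta t^{-(\theta+s)/2}\norm{A_0^{s/2}v}{H}$, obtained by interpolating between the smooth-data case $\theta=2$ and the $L^2$-stability case, using the commutation properties recorded in~\eqref{eq:advdiff:eqnorms2} and~\eqref{eq:advdiff:eqnorms3} to move the discrete fractional powers past $P_h$. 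The third, purely temporal discretization term $(S_h(t_j)P_h-\tilde S_{h,\Delta t}(t_j))A_0^{s/2}$ is $(S_h(t_j)-S_{h,\Delta t}^j)P_h A_0^{s/2}$; here I would use the standard integral representation of the rational-approximation error for the backward Euler scheme, as in the proof of~\cite[Theorem~8.2]{FS91}, combined with the discrete smoothing bound $\norm{A_h^s S_{h,\Delta t}^j}{\cL(H)} \lesssim t_j^{-s}$ and~\eqref{eq:advdiff:eqnorms3} to convert $P_h A_0^{s/2}$ into $A_{h}^{s/2}P_h$ up to a constant, producing the ${\Delta t}^{\theta/2}t_j^{-(\theta+s)/2}$ factor.

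I expect the main obstacle to be the spatial term in the non-self-adjoint setting: the cited results~\cite[Theorem~7.1]{FS91} and~\cite[Lemma~5.1]{AKL16} are established for Dirichlet boundary conditions and, in the latter case, typically for self-adjoint $A$, so the real work is verifying that the interpolation machinery carries over when $A \neq A^*$. This is precisely where expressing regularity through the symmetric operator $A_0$ pays off: because $\dom(A^{s/2}) = \dom((A^*)^{s/2}) = \dom(A_0^{s/2})$ with equivalent norms for $s\in[0,1]$, and the discrete analogues~\eqref{eq:advdiff:eqnorms2}--\eqref{eq:advdiff:eqnorms3} hold uniformly in $h$, I can reduce all the data-regularity bookkeeping to the self-adjoint $A_0$ and its discretization $A_{0,h}$, at which point the interpolation arguments of~\cite{AL16} apply verbatim. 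The remaining care is keeping the exponent constraints consistent across the three terms so that the single admissible range $\theta\in[0,2]$, $s\in[0,2-\theta]\cap[0,1)$ simultaneously validates~\eqref{eq:advdiff:semistab}, \eqref{eq:advdiff:semiholder} and the discrete smoothing estimate.
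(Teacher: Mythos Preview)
Your proposal is correct and follows essentially the same three-term split that the paper itself outlines in the paragraph preceding the lemma; the paper gives no further formal proof beyond that sketch, and your expansion of each term (using~\eqref{eq:advdiff:semistab}--\eqref{eq:advdiff:semiholder} for the first, the interpolation argument of~\cite[Lemma~5.1]{AKL16} together with~\cite[Theorem~7.1]{FS91} for the second, and the integral-representation argument of~\cite[Theorem~8.2]{FS91} for the third) matches it. One small correction: the restriction $s\in[0,1)$ is not needed for~\eqref{eq:advdiff:semistab}, which holds for all $s\ge 0$; it enters instead through the discrete smoothing bound~\eqref{eq:advdiff:discsemistab} and the norm-transfer estimates~\eqref{eq:advdiff:eqnorms2}--\eqref{eq:advdiff:eqnorms3}, and the paper also invokes~\cite[Lemma~3.8(iii)]{K14} in the spatial term, which you should cite alongside~\cite[Theorem~7.1]{FS91}.
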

	
	We now define an approximation $(\tilde K_{h,\Delta t}(t_j))_{j=0}^{N_{\Delta t}}$ of~\eqref{eq:C} by $\tilde{K}_{h,\Delta t}(0) = P_h Q_\xi P_h$ and, for $j \ge 1$, 
	\begin{align*}
	&(I + \Delta t A_h) \tilde{K}_{h,\Delta t}(t_j) (I + \Delta t A_h)^* \\
	&\quad= \tilde{K}_{h,\Delta t}(t_{j-1}) + \Delta t F  \tilde{K}_{h,\Delta t}(t_{j-1}) + \Delta t \tilde{K}_{h,\Delta t}(t_{j-1}) F^* + \Delta t P_h B (P_h B)^*. %
	\end{align*}
	With our choice of $F$, this can equivalently be written as
	\begin{equation}
	\label{eq:advdiff:recursion}
	(I + \Delta t A_h) \tilde{K}_{h,\Delta t}(t_j) (I + \Delta t A_h)^* = (1 + 2 c_0 \Delta t) \tilde{K}_{h,\Delta t}(t_{j-1}) + \Delta t P_h Q P_h.
	\end{equation}
	To implement this scheme, we have to know the value of $c_0$ explicitly. Several choices are possible, see~\eqref{eq:advdiff:c0} for an example. A closed form of $\tilde K_{h,\Delta t}(t_j), j = 0, \ldots, N_{\Delta t},$ is given by
	\begin{equation}
	\label{eq:advdiff:discreteC}
	\begin{split}
	\tilde{K}_{h,\Delta t}(t_j) 
	&= \tilde{S}_{h,\Delta t}(t_j) Q_\xi \tilde{S}_{h,\Delta t}(t_j)^* + \int_{0}^{t_j}  \tilde{S}_{h,\Delta t}(t_j-s) F \tilde{K}_{h,\Delta t}(\dfloor{s})  \tilde{S}^*_{h,\Delta t}(t_j-s) \dd s \\
	&\quad+ \int_{0}^{t_j}  \tilde{S}_{h,\Delta t}(t_j-s) \tilde{K}_{h,\Delta t}(\dfloor{s}) ( \tilde{S}_{h,\Delta t}(t_j-s) F)^* \dd s \\ 
	&\quad + \int_{0}^{t_j} \tilde{S}_{h,\Delta t}(t_j-s) B ( \tilde{S}_{h,\Delta t}(t_j-s) B)^* \dd s.
	\end{split}
	\end{equation}
	Here $\dfloor{\cdot} = \floor{\cdot/\Delta t} \Delta t$, with $\floor \cdot$ denoting the floor function. The $\cL_1(H)$-norm of the last term can be bounded by~\eqref{eq:schatten_bound_2}, Assumption~\ref{ass:advdiff:ass1} and~\eqref{eq:advdiff:discsemistab} via 
	\begin{equation}
	\label{eq:advdiff:discreteCbound}
	\begin{split}
	&\Bignorm{\int_{0}^{t_j} \tilde{S}_{h,\Delta t}(t_j-s) B ( \tilde{S}_{h,\Delta t}(t_j-s) B)^* \dd s}{\cL_1(H)} \\
	&\quad\le\int_{0}^{t_j} \norm{\tilde{S}_{h,\Delta t}(t_j-s) B}{\cL_2(U_0,H)}^2 \dd s \\
	&\quad= \int_{0}^{t_j} \norm{\tilde{S}_{h,\Delta t}(t_j-s) A_h^{\frac{1-r}{2}}A_h^{\frac{r-1}{2}}A_{0,h}^{\frac{1-r}{2}}A_{0,h}^{\frac{r-1}{2}}P_h A_{0}^{\frac{1-r}{2}}A_{0}^{\frac{r-1}{2}} Q^{\frac{1}{2}} }{\cL_2(H)}^2 \dd s \\
	&\quad\lesssim \norm{A_h^{\frac{r-1}{2}}A_{0,h}^{\frac{1-r}{2}}}{\cL(H)}^2  \norm{A_{0,h}^{\frac{r-1}{2}} P_h A_{0}^{\frac{1-r}{2}}}{\cL(H)}^2 \norm{A_{0}^{\frac{r-1}{2}} Q^{\frac{1}{2}} }{\cL_2(H)}^2 \int^{t_j}_0 (t-s)^{r-1} \dd s \lesssim t^r \lesssim T^r.
	\end{split}
	\end{equation}
	Here we also made use of~\eqref{eq:advdiff:eqnorms2} and~\eqref{eq:advdiff:eqnorms3}. Using this result along with~\eqref{eq:advdiff:discsemistab} applied to the other terms of~\eqref{eq:advdiff:discreteC} we find that 
	\begin{equation*}
	\norm{\tilde{K}_{h,\Delta t}(t_j)}{\cL_1(H)} \lesssim 1 + \sum_{k = 0}^{j-1} \norm{\tilde{K}_{h,\Delta t}(t_k)}{\cL_1(H)}.
	\end{equation*}
	Along with the fact that $Q_\xi \in \cL_1(H)$, the discrete Gronwall lemma (see \cite[2.2~(9)]{G75}) implies that
	\begin{equation*}
	\sup_{h,\Delta t \in (0,1]} \sup_{j = 0, \ldots, N_{\Delta t}} \norm{\tilde{K}_{h,\Delta t}(t_j)}{\cL_1(H)} < \infty.
	\end{equation*}
	
	With this estimate in place, we move on to the main result of this section. 
	
	\begin{theorem}
		\label{thm:advdiff:error}
		Let Assumption~\ref{ass:advdiff:ass1} be satisfied. For all $\theta < 2r$, there is a constant $C<\infty$ such that for all $h, \Delta t \in (0,1]$
		\begin{equation*}
		\sup_{j = 0, \ldots, N_{\Delta t}} \norm{K(t_j) - \tilde{K}_{h,\Delta t}(t_j)}{\cL_1(H)} \le C (h^\theta + {\Delta t}^{\frac{\theta}{2}}).
		\end{equation*}
		Moreover, for all $\theta < 1+r$, there is a constant $C<\infty$ such that for all $h, \Delta t \in (0,1]$
		\begin{equation*}
		\sup_{j = 0, \ldots, N_{\Delta t}} \norm{K(t_j) - \tilde{K}_{h,\Delta t}(t_j)}{\cL_2(H)} \le C (h^\theta + {\Delta t}^{\frac{\theta}{2}}).
		\end{equation*}
	\end{theorem}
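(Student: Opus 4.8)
The plan is to apply the abstract error decomposition of Proposition~\ref{prop:Kerr} with $\tilde S = \tilde S_{h,\Delta t}$ and $\hat K(s) = \tilde K_{h,\Delta t}(\dfloor{s})$, since the closed form~\eqref{eq:advdiff:discreteC} of the scheme is precisely of the type~\eqref{eq:tilde-K}. Writing $\cO^\pm(\tau) = S(\tau) \pm \tilde S_{h,\Delta t}(\tau)$, this bounds $\norm{K(t_j) - \tilde K_{h,\Delta t}(t_j)}{\cL_i(H)}$, $i \in \{1,2\}$, by five contributions: an initial-data term, the consistency term $2\int_0^{t_j} \norm{S(t_j-s) F (K(s) - \hat K(s)) S(t_j-s)^*}{\cL_i(H)} \dd s$, two symmetric drift terms, and the noise term $\int_0^{t_j} \norm{\cO^-(t_j-s) B (\cO^+(t_j-s) B)^*}{\cL_i(H)} \dd s$. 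I would estimate each of these and then close with a discrete Gronwall argument.

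The heart of the matter, and the source of the two distinct rates, is the noise term. Using $B = I \colon U_0 \to H$, so that $\norm{\cO^\pm(\tau) B}{\cL_2(U_0,H)} = \norm{\cO^\pm(\tau) Q^{1/2}}{\cL_2(H)}$, I would insert the identity $A_0^{(1-r)/2} A_0^{(r-1)/2}$ and use Assumption~\ref{ass:advdiff:ass1} together with Lemma~\ref{thm:advdiff:semigrouperror} (applied to $\cO^-$ with $s = 1-r$) and the stability estimates~\eqref{eq:advdiff:semistab} and~\eqref{eq:advdiff:discsemistab} (applied to $\cO^+$) to obtain $\norm{\cO^-(\tau) Q^{1/2}}{\cL_2(H)} \lesssim (h^\theta + \Delta t^{\theta/2}) \tau^{-(\theta + 1 - r)/2}$ and $\norm{\cO^+(\tau) Q^{1/2}}{\cL_2(H)} \lesssim \tau^{-(1-r)/2}$. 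For the $\cL_1$ norm I would bound the product via~\eqref{eq:schatten_bound_2} as a product of two Hilbert--Schmidt factors, so that both weights appear and the integrand carries $(t_j - s)^{-(\theta/2 + 1 - r)}$, which is integrable on $[0,t_j]$ precisely when $\theta < 2r$. For the $\cL_2$ norm I would instead keep one factor in $\cL(U_0,H)$, using only the boundedness $\norm{\cO^+(\tau) Q^{1/2}}{\cL(H)} \lesssim \norm{Q^{1/2}}{\cL(H)} \lesssim 1$; then the integrand carries only the single weight $(t_j-s)^{-(\theta+1-r)/2}$, integrable precisely when $\theta < 1 + r$. This asymmetry, two singular weights for $\cL_1$ versus one for $\cL_2$, is what produces the gap between the two stated thresholds.

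The remaining terms should not constrain the rate. For the drift terms I would use that $\hat K(s) = \tilde K_{h,\Delta t}(\dfloor{s})$ is positive semidefinite and uniformly bounded in $\cL_1(H)$, as established just before the theorem, split off $\hat K(s)^{1/2}$, and estimate as above; since only one $\cO^-$ factor is present and $F = c_0 I$ merely contributes a constant, the resulting weight is integrable for all $\theta < 2$. The initial-data term $\norm{\cO^-(t_j) Q_\xi \cO^+(t_j)^*}{\cL_i(H)}$ is handled analogously by splitting $Q_\xi = Q_\xi^{1/2}(Q_\xi^{1/2})^*$ and invoking Lemma~\ref{thm:advdiff:semigrouperror}; here one must take care that the time weight is evaluated at the node $t_j$ rather than integrated, so this argument relies on the regularity of $Q_\xi^{1/2}$ to keep the bound of order $h^\theta + \Delta t^{\theta/2}$ uniformly in $j$.

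Finally, for the consistency term I would split $K(s) - \hat K(s) = (K(s) - K(\dfloor{s})) + (K(\dfloor{s}) - \tilde K_{h,\Delta t}(\dfloor{s}))$. The first piece is controlled by a temporal H\"older estimate for $s \mapsto K(s)$ in $\cL_i(H)$, which I would derive from the integral equation~\eqref{eq:C} using~\eqref{eq:advdiff:semiholder} and Assumption~\ref{ass:advdiff:ass1}, yielding a bound of the right order $\Delta t^{\theta/2}$; the second piece is the grid-point error, contributing $\Delta t \sum_{k < j}\norm{K(t_k) - \tilde K_{h,\Delta t}(t_k)}{\cL_i(H)}$ after bounding $\norm{S(t_j-s)}{\cL(H)} \lesssim 1$. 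Collecting everything gives an inequality of the form $\norm{K(t_j) - \tilde K_{h,\Delta t}(t_j)}{\cL_i(H)} \le C(h^\theta + \Delta t^{\theta/2}) + C \Delta t \sum_{k=0}^{j-1} \norm{K(t_k) - \tilde K_{h,\Delta t}(t_k)}{\cL_i(H)}$, and the discrete Gronwall lemma closes the argument. I expect the main obstacles to be the precise balancing of the singular time weights in the noise term, which pins down the exact thresholds $2r$ and $1+r$, and establishing the matching temporal regularity of $K$; the bookkeeping of the fractional-power norm equivalences between $A$, $A_0$, $A_h$ and $A_{0,h}$ through~\eqref{eq:advdiff:eqnorms2} and~\eqref{eq:advdiff:eqnorms3} is routine but must be carried through carefully.
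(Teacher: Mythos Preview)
Your proposal is correct and follows essentially the same approach as the paper: apply Proposition~\ref{prop:Kerr}, extract the thresholds $\theta<2r$ and $\theta<1+r$ from the noise term exactly as you describe (the paper reverses which of $\cO^\pm$ carries the operator-norm factor in the $\cL_2$ case, but the resulting integrand exponent is identical), and close with a discrete Gronwall argument. Your treatment is in fact slightly more careful than the paper's, which writes $K(\dfloor{s})$ directly in the consistency term---bypassing the H\"older step $K(s)-K(\dfloor{s})$ you identify---and dispatches the initial-data and drift terms with a brief ``similarly''.
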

	
	\begin{proof}
		The result is a consequence of Proposition~\ref{prop:Kerr} applied to $\norm{K(t_j) - \tilde{K}_{h,\Delta t}(t_j)}{\cL_i(H)}$. We start with $i=1$.
		For the last term of the error decomposition in this proposition, the properties~\eqref{eq:schatten_bound_1}, \eqref{eq:schattenadjoint}, \eqref{eq:schatten_bound_2} and~\eqref{eq:advdiff:discsemistab} along with Assumption~\ref{ass:advdiff:ass1}, Lemma~\ref{thm:advdiff:semigrouperror} and an argument similar to that of~\eqref{eq:advdiff:discreteCbound} imply that
		\begin{equation}
		\label{eq:thm:advdiff:error:1}
		\begin{split}
		&\int^{t_j}_0 \norm{\cO^-(t_j-s) B (\cO^+(t_j-s)B)^*}{\cL_1(H)} \dd s \\
		&\quad \le \int^{t_j}_0 \norm{\cO^-(t_j-s) B}{\cL_2(U_0,H)} \norm{\cO^+(t_j-s)B}{\cL_2(U_0,H)} \dd s \\
		&\quad\lesssim \int^{t_j}_0 \norm{(S(t_j-s)-\tilde S_{h,\Delta t}(t_j-s))A_0^{\frac{1-r}{2}}}{\cL(H)} \norm{A_0^{\frac{r-1}{2}}Q^{\frac{1}{2}}}{\cL_2(H)}^2 \\
		&\hspace{5em}\times\left( \norm{\tilde{S}_{h,\Delta t}(t_j-s) A_h^{\frac{1-r}{2}} }{\cL(H)} + \norm{S(t_j-s) A^{\frac{1-r}{2}} }{\cL(H)} \right) \dd s \\
		&\quad\lesssim (h^\theta + {\Delta t}^\frac{\theta}{2}) \int_{0}^{t_j} (t_j-s)^{r-1-\frac{\theta}{2}} \dd s \lesssim T^{r-\frac{\theta}{2}} (h^\theta + {\Delta t}^\frac{\theta}{2}).
		\end{split}
		\end{equation}
		Similarly, the first, third and fourth term of the split in Proposition~\ref{prop:Kerr} can be bounded by a constant times $h^\theta + {\Delta t}^{\theta/2}$, noting that $F=c_0 I$. By~\eqref{eq:advdiff:semistab}, the second term is bounded by 
		\begin{align*}
		&\int^{t_j}_0 \norm{S(t_j-s) F (K(\dfloor{s})-\tilde K_{h,\Delta t}(\dfloor{s}))S(t_j-s)^*}{\cL_1(H)} \dd s \\ 
		&\quad\lesssim \Delta t \sum_{k = 0}^{j-1} \norm{ (K(t_k)-\tilde K_{h,\Delta t}(t_k))}{\cL_1(H)}.
		\end{align*}
		The proof for $i=1$ is now completed by an application of the discrete Gronwall inequality.
		
		In the case that $i=2$, the only major difference is the calculation in~\eqref{eq:thm:advdiff:error:1}. For this, let us note that since $\inpro[H]{v}{\cO^+(t_j-s) B u} = \inpro[U_0]{Q \cO^+(t_j-s)^* v}{u}$
		for $v \in H, u \in U_0$, we have $\cO^-(t_j-s) B (\cO^+(t_j-s)B)^* = \cO^-(t_j-s) Q \cO^+(t_j-s)^*$. The adjoint on the right hand side is taken with respect to $\cL(H)$. Using this observation, \eqref{eq:advdiff:discsemistab}, Lemma~\ref{thm:advdiff:semigrouperror} and the fact that $Q^{1/2} \in \cL(H)$ yields
		\begin{align*}
		&\int^{t_j}_0 \norm{\cO^-(t_j-s) B (\cO^+(t_j-s)B)^*}{\cL_2(H)} \dd s \\
		&\quad\le \int^{t_j}_0 \norm{\cO^-(t_j-s) Q^{\frac{1}{2}}}{\cL(H)} \norm{\cO^+(t_j-s)Q^{\frac{1}{2}}}{\cL_2(H)} \dd s \\
		&\quad\lesssim \int^{t_j}_0 \norm{(S(t_j-s)-\tilde S_{h,\Delta t}(t_j-s))}{\cL(H)} \norm{Q^{\frac{1}{2}}}{\cL(H)}   \\
		&\hspace{5em}\times\left( \norm{\tilde{S}_{h,\Delta t}(t_j-s) A_h^{\frac{1-r}{2}} }{\cL(H)} + \norm{S(t_j-s) A^{\frac{1-r}{2}} }{\cL(H)} \right) \norm{A_0^{\frac{r-1}{2}}Q^{\frac{1}{2}}}{\cL_2(H)} \dd s \\
		&\quad\lesssim (h^\theta + {\Delta t}^\frac{\theta}{2}) \int_{0}^{t_j} (t_j-s)^{\frac{r-1-\theta}{2}} \dd s \lesssim T^{\frac{r+1-\theta}{2}} (h^\theta + {\Delta t}^\frac{\theta}{2}). \qedhere
		\end{align*}
	\end{proof}
	
	\begin{example}
		\label{ex:heat}
		
		We conclude this section by demonstrating the results of Theorem~\ref{thm:advdiff:error} in the case that $\cD = (0,1)$ and $T = 1$. We choose deterministic initial conditions so that $Q_\xi=0$. Homogeneous Neumann boundary conditions are considered and we set $a_{1,1}(x)=4$, $a_1(x) = \sin(2 \pi x)$ and $a_0(x) = 0$ for all $x \in \cD$. With this, we may take $c_0 =1/8$. 
		From~\eqref{eq:advdiff:recursion}, we obtain a matrix recursion 
		\begin{equation*}
		(\mathbf{M}_{h} + \Delta t \mathbf{A}_{h}) \mathbf{K}_{j,h,\Delta t} (\mathbf{M}_{h} + \Delta t \mathbf{A}_h)^* = (1 + 2 c_0 \Delta t) \mathbf{M}_{h}\mathbf{K}_{j-1,h,\Delta t} \mathbf{M}_{h} + \Delta t \mathbf{Q}_{h}.
		\end{equation*}
		Here $\mathbf{K}_{j,h,\Delta t}$ is the matrix of coefficients $(k_{j,m,n})_{m,n=1}^{N_h}$ in the expansion
		\begin{equation*}
		\tilde{K}_{h,\Delta t}(t_j) = \sum_{m,n = 1}^{N_h} k_{j,m,n} \phi^h_m \otimes \phi^h_n,
		\end{equation*}
		where $(\phi^h_m)_{m=1}^{N_h}$ is the usual ''hat function'' basis of $V_h$ with $N_h = \dim(V_h)$. Furthermore, $(\mathbf{M}_{h})_{i,j} = \inpro[H]{\phi^h_i}{\phi^h_j}$, $(\mathbf{A}_{h})_{i,j} = a(\phi^h_i,\phi^h_j)$ and $(\mathbf{Q}_{h})_{i,j} = \inpro[H]{Q \phi^h_i}{\phi^h_j}$ for $i,j = 1, \ldots, N_h$. We solve this system of matrix equations for two choices of $Q$ and decreasing values of $h, \Delta t$.
		
		First, we consider the white noise case, i.e., $Q = I$. By our choice of $b$, the operator $A_0$ retains the Neumann boundary conditions of $A$. Lemma~2.3 in~\cite{KLP21b} then implies that Assumption~\ref{ass:advdiff:ass1} holds for all $r<1/2$. By Theorem~\ref{thm:advdiff:error}, we therefore expect to see a convergence rate essentially of order $1$ and $3/2$, respectively, if we plot the errors $\norm{K(T)-\tilde K_{h, \Delta t}(T)}{\cL_i(H)}$, $i \in \{1,2\}$, for $h = \sqrt{\Delta t} = 2^{-1},\ldots,2^{-7}$. This agrees with the results of~Figure~\ref{subfig:heat-errors-1}. 
		\begin{figure}[ht!]
			\centering		
			\subfigure[Errors with white noise.\label{subfig:heat-errors-1}]{\includegraphics[width = .49\textwidth]{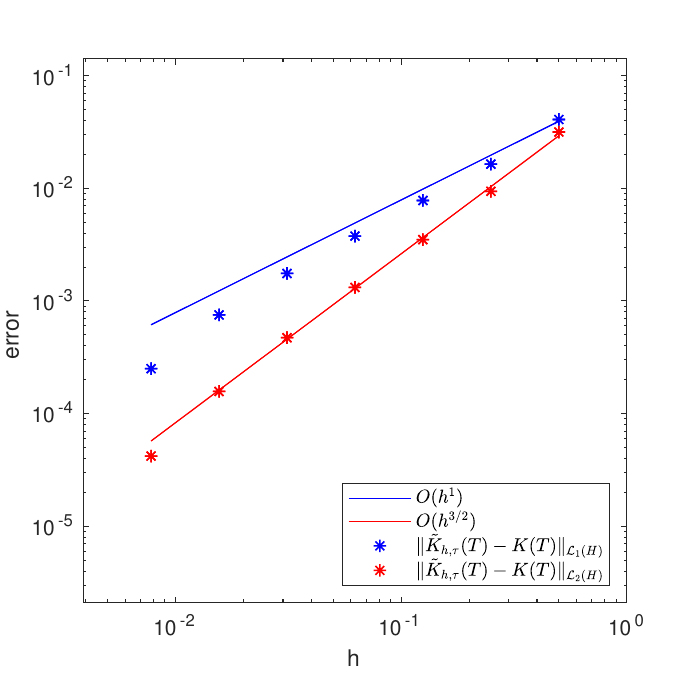}}
			\subfigure[Errors with exponential kernel noise. \label{subfig:heat-errors-2}]{\includegraphics[width = .49\textwidth]{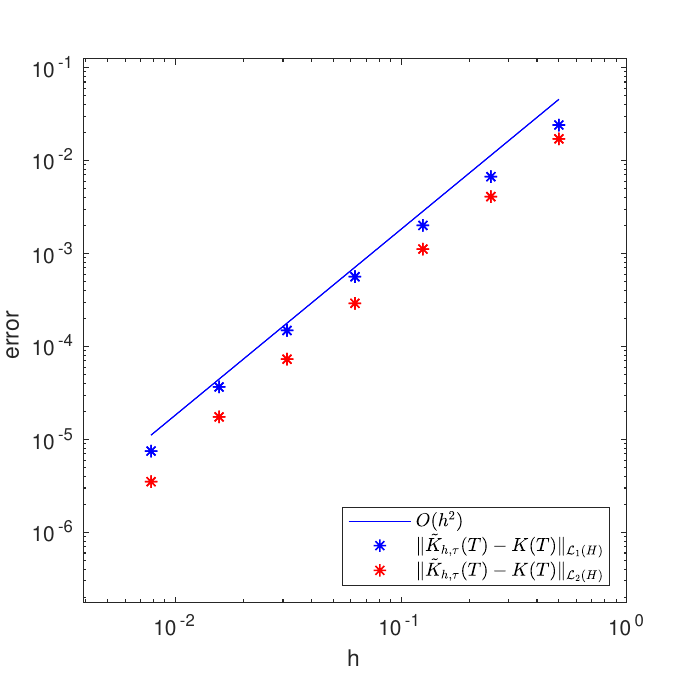}}
			\caption{Approximate errors $\norm{K(T)-\tilde K_{h, \Delta t}(T)}{\cL_i(H)}$, $i \in \{1,2\}$, for the equations of Example~\ref{ex:heat}.}
		\end{figure} 
		
		In place of $K(T)$ we used $\tilde K_{h', {\Delta t}'}(T)$ at $h' = \sqrt{{\Delta t}'} = 2^{-8}$. The errors were computed by
		\begin{equation}
		\label{eq:trace-error-formula}
		\norm{\tilde K_{h, {\Delta t}}(T)-\tilde K_{h', {\Delta t}'}(T)}{\cL_1(H)} = \trace\left(  \left|\mathbf{N}_{h,h'}^{\frac{1}{2}} \left[\begin{array}{cc}
		\mathbf{K}_{N_{\Delta t},h,\Delta t} & 0 \\
		0 & -\mathbf{K}_{N_{{\Delta t}'},h',{\Delta t}'}
		\end{array}\right] 	\mathbf{N}_{h,h'}^{\frac{1}{2}}\right| \right),
		\end{equation}
		where the absolute value of an operator $\Gamma$ is defined by $|\Gamma| = (\Gamma^*\Gamma)^{1/2}$, and
		\begin{equation}
		\label{eq:hs-error-formula}
		\begin{split}
		&\norm{\tilde K_{h, {\Delta t}}(T)-\tilde K_{h', {\Delta t}'}(T)}{\cL_2(H)}^2 \\ 
		&= \trace\left( \left(\mathbf{K}_{N_{\Delta t},h,\Delta t} \mathbf{M}_h\right)^2 \right) - 2 \trace\left(\mathbf{K}_{N_{\Delta t},h,\Delta t} \mathbf{M}_{h,h'} \mathbf{K}_{N_{{\Delta t}'},{h'},{\Delta t}'} \mathbf{M}_{h',h}\right)
		\\
		&\quad+ \trace\left(\left(\mathbf{K}_{N_{{\Delta t}'},{h'},{\Delta t}'} \mathbf{M}_{h'}\right)^2 \right).
		\end{split}
		\end{equation}
		Here $(\mathbf{M}_{h,h'})_{i,j} = \inpro[H]{\phi^h_i}{\phi^{h'}_j}$ for $i = 1, \ldots, N_h$, $j = 1, \ldots, N_{h'}$ and
		\begin{equation*}
		\mathbf{N}_{h,h'} = \left[\begin{array}{cc}
		\mathbf{M}_{h} & \mathbf{M}_{h,h'} \\
		\mathbf{M}_{h',h} & \mathbf{M}_{h'}
		\end{array}\right].
		\end{equation*}
		The formula~\eqref{eq:hs-error-formula} is a straightforward consequence of the definition of the Hilbert--Schmidt norm while~\eqref{eq:trace-error-formula} requires a more detailed argument. We postpone this to an appendix.

		Now, we consider the case that $Q$ is an integral operator defined by
		\begin{equation*}
		\inpro[H]{Qu}{v} = \int_{\cD \times \cD} q(x,y) u(x) v(y) \dd x \dd y, u,v \in H.
		\end{equation*} 
		We set $q(x,y) = \exp(-2|x-y|)$ for $x,y \in \cD$. Figure~\ref{subfig:advdiff-path} shows an approximate realization of the solution to~\eqref{eq:advdiff} for this $q$. Figure~\ref{subfig:advdiff-cov} shows its covariance function, corresponding to~$\tilde K_{h, {\Delta t}}(t)$, at $t=0.1$. Since $\trace(Q) < \infty$, Assumption~\ref{ass:advdiff:ass1} holds for $r\le1$, so we expect a rate of order $2$ for both norms, in the same setup as before. Again, this agrees with Figure~\ref{subfig:heat-errors-2}. 
	\end{example}
	
	\begin{remark}
		\label{rem:heatstrongvcov}
		If one would instead directly compute an approximation $\tilde X_{h,\Delta t}$ based on the same semigroup approximation $\tilde S_{h,\Delta t}$ and compute its covariance by the method of \cite{P20}, we would directly get a bound on the covariance error $\norm{K(T)-\Cov(\tilde X_{h,\Delta t}(T))}{\cL_i(H)}$ by the strong error $\norm{X(t)-\tilde X_{h,\Delta t}(T)}{L^2(\Omega,H)}$, see \eqref{eq:introcovboundedbystrong}. Note that when $c_0 = 0$, $\Cov(\tilde X_{h,\Delta t}(T)) = \tilde K_{h, \Delta t}(T)$. In the first case above, when $Q=I$, this means that the error would be bounded by $h^r + k^{r/2}$ for $r < 1/2$, which is a lower rate than the estimates obtained above for both $i = 1$ and $i = 2$. See~\cite{Y05} for the calculations that yield this rate. Let us also note that our covariance error rate in the case that $i=1$ coincides with that of the weak error $\left|\E\left[\phi\big(X(T)\big) - \phi\big(\tilde X_{h,\Delta t}(T)\big)\right]\right|$,
		where $\phi: H \to \R$ is a smooth functional, see \cite{KLL13}. For the case $i = 2$, the covariance error rate exceeds the weak error rate.
	\end{remark}
	
	\subsection{The stochastic wave equation}
	\label{sec:wave}
	
	Next, we apply our results to the stochastic wave equation perturbed by a linear inhomogeneity $G$, formally given by
	\begin{equation}
	\label{eq:original_wave_equation}
	\dd \dot{U} (t,x) - \Delta U(t,x) \dd t =  G U(t,x) \dd t + \dd W(t,x), t \in [0,T], x \in \cD.
	\end{equation}
	Here $\dot{U}$ denotes the first time derivative of the unknown $H= L^2(\cD)$-valued stochastic process~$U$ with initial conditions $\dot{U}(0) =\psi$ and $U(0)=\chi$. With $G=-Q$ we obtain the DNA model~\eqref{eq:wave-intro} of the introduction. We again let $W$ be a generalized Wiener process in $H$ with covariance operator $Q \in \Sigma^+(H)$. We introduce precise assumptions on $G,Q, u_0$ and $v_0$ below and write $\Lambda = - \Delta$ for the negative Laplacian with zero Dirichlet boundary conditions on $H$. The spaces $\dot{H}^\alpha$, along with fractional powers $\Lambda^{\alpha/2}$, can be defined as for the operator $A_0$ in Section~\ref{sec:heat}.
	
	In order to treat~\eqref{eq:original_wave_equation} in a semigroup framework, we define for $\alpha \in \R$ the Hilbert space $\cH^\alpha = \dot{H}^\alpha \oplus \dot{H}^{\alpha-1}$ with inner product $\inpro[\cH^\alpha]{v}{w} = \inpro[\dot{H}^\alpha]{v_1}{w_1} + \inpro[\dot{H}^{\alpha-1}]{v_2}{w_2}$ for $v = [v_1,v_2]^\top, w = [w_1,w_2]^\top \in \cH^\alpha$. Writing $\cH = \cH^0$, let $A\colon \dom(A) = \cH^1 \to \cH$, $B\colon \dot{H}^{-1} \to \cH$ 
	and $\Theta^{\alpha/2}\colon \cH^\alpha \to \cH, \alpha \in \R,$ be given by 
	\begin{equation*}
	A = \left[\begin{array}{cc}
	0 &- I \\
	\Lambda &0
	\end{array}\right],
	B = \left[
	\begin{array}{c}
	0 \\
	I
	\end{array}\right]
	\text{ and }
	\Theta^{\frac{\alpha}{2}} = \left[\begin{array}{cc}
	\Lambda^{\frac{\alpha}{2}} &0 \\
	0 & \Lambda^{\frac{\alpha}{2}}
	\end{array}\right].
	\end{equation*}
	The third operator is used to relate the norms of $\cH^\alpha$ and $\cH$ via $\norm{\cdot}{\cH^\alpha} = \norm{\Theta^{\alpha/2} \cdot}{\cH}$. Note that, since $\Lambda^{\alpha/2}$ extends to an operator in $\cL(\dot{H}^s,\dot{H}^{s-\alpha})$ for all $\alpha, s \in \R$, so $\Theta^{\alpha/2}$ 
	can be extended to an operator in $\cL(\cH^s,\cH^{s-\alpha})$. We do so without changing notation. We write $P_1$ for the projection given by $P_1 v = v_1$ for $v = [v_1, v_2]^\top \in \cH$. Note that $\Theta^{\alpha/2} B = B \Lambda^{\alpha/2}$. From this, we obtain the identities $
	\norm{\Theta^{\frac{\alpha}{2}} B v}{\cH} = \norm{\Lambda^{\frac{\alpha-1}{2}}v}{H} = \norm{v}{\dot{H}^{\alpha-1}}$,
	$v \in \dot{H}^{\alpha-1}$, and
	\begin{equation}
	\label{eq:B_bound_2}
	\norm{B}{\cL(\dot{H}^{-1},\cH)} = 
	\norm{B\Lambda^\frac{1}{2}}{\cL(H,\cH)} = \norm{\Theta^{\frac{1}{2}}B}{\cL(H,\cH)} = 1.
	\end{equation}
	
	The operator $-A$ is the generator of a $C_0$-semigroup (actually a group, see \cite{L12}) on $\cH$ given by
	\begin{equation*}
	S(t) =
	\left[\begin{array}{cc}
	\cos(t \Lambda^{\frac{1}{2}}) & \Lambda^{-\frac{1}{2}} \sin(t \Lambda^{\frac{1}{2}}) \\
	-\Lambda^{\frac{1}{2}} \sin(t \Lambda^{\frac{1}{2}}) & \cos(t \Lambda^{\frac{1}{2}})
	\end{array}\right] \text{ for }t \in \R.
	\end{equation*}
	It satisfies $\Theta^{\frac{\alpha}{2}}S(\cdot) = S(\cdot) \Theta^{\frac{\alpha}{2}}$ and $\Lambda^{\frac{\alpha}{2}}P_1 S(\cdot)B = P_1 S(\cdot)B \Lambda^{\frac{\alpha}{2}}$ for all $\alpha \in \R$. Moreover,
	\begin{equation}
	\label{eq:semigroup_bound} 
	\norm{S(t)}{\cL(\cH)} \le 1  \text{ for }t \in \R.
	\end{equation}
	
	If we set $X(t) = [X_1(t), X_2(t)]^\top = [U(t,\cdot), \dot{U}(t,\cdot)]^\top$, \eqref{eq:original_wave_equation} can be put in an abstract It\^o form
	\begin{equation*}
	\dd X(t) + A X(t) \dd t = F X(t) \dd t + B \dd W(t), t \in (0,T],
	\end{equation*}
	with $F = B G P_1$. We make the following assumptions on $G \colon H \to \dot{H}^{-1}$, $Q \in \Sigma^+(H)$ and the initial condition $X(0) = \xi = [\chi, \psi]^\top$.
	\begin{assumption} 
		\label{assumptions:waveexistence} There is a constant $C < \infty$ such that
		\begin{enumerate}[label=(\roman*)]
			\item %
			$\norm{G}{\cL(H,\dot{H}^{-1})} = \norm{\Lambda^{-1/2} G}{\cL(H)} \le C$ and
			\item \label{assumptions:waveexistence:Q}
			$\norm{Q^{1/2}}{\cL_2(H,\dot{H}^{-1})} = \norm{\Lambda^{-1/2} Q^{1/2}}{\cL_2(H)} \le C$.
		\end{enumerate}
		Moreover, $\xi$ is an $\cH$-valued $\cF_0$-measurable Gaussian random variable. 
	\end{assumption}
	
	With this assumption in place, we can put the equation in the framework of Section~\ref{sec:covariance}. Combining~\eqref{eq:B_bound_2} with~\eqref{eq:semigroup_bound} and Assumption~\ref{assumptions:waveexistence}, we find that $\norm{S(t)F}{\cL(\cH)}$ and $\norm{S(t)B}{\cL_2(U_0,\cH)}$ are bounded on $[0,T]$. By the strong continuity of  $S$, Assumption~\ref{assumptions:1} is fulfilled. 
	Therefore, we can apply Propositions~\ref{prop:Kexists} and~\ref{prop:Xexists} along with Theorem~\ref{thm:covarianceisK} to obtain a unique solution $K$ to~\eqref{eq:C} such that $K(t) = \Cov(X(t))$, $t \in [0,T]$.
	
	We now move on to approximations of $K$. We first consider temporally semidiscrete approximations based on a rational approximation of $S$. This is a stepping stone towards analyzing a fully discrete approximation. For a time step ${\Delta t}\in(0,1]$ we again let $(t_j)_{j\in \N_0}\subset\R$ be given by $t_j=j \Delta t$ and $N_{\Delta t} +1 = \inf\{j\in\N : t_j \notin [0,T]\}$. Let $R\colon \C \to \C$ be a rational function such that $|R(iy)|\le1$ for all $y \in \R$ and, for some approximation order $\rho \in \N$ and $C,b > 0$, $|R(iy)-e^{-iy}| \le C |y|^{\rho+1}$ for all $y \in \R$ with $|y|\le b$, where $i = \sqrt{-1}$. We write $S^n_{\Delta t} = R(\Delta t A)^n$ for the rational approximation of the operator $S(t_n)$. An interpolation $\tilde{S}_{\Delta t} \colon [0,T] \to \cL(\cH)$ of the approximation is defined as in~\eqref{eq:semigroup_approximation_interpolation}, and we say that $\tilde{S}_{\Delta t}$ approximates the semigroup $S$ with order $\rho \in \N$.
	This framework covers many well-known temporal discretizations, cf.\ \cite[Section~4]{BB79}. Examples include the backward Euler scheme of the previous section (with order $\rho = 1$) as well as the Crank--Nicolson approximation (with order $\rho=2$) given by $R(\Delta t A) = (1 - \Delta t A/2)(1 + \Delta t A/2)^{-1}$.
	The stability result
	\begin{equation}
	\label{eq:wave_semidiscrete_stability}
	\norm{\tilde{S}_{\Delta t}(t)}{\cL(\cH)} \le 1,
	\end{equation} 
	for all $t \in [0,T]$, holds uniformly in $\Delta t \in (0,1]$ \cite[Section~4.2]{KLL13}. Moreover, $\Theta^{\alpha/2}$ commutes with $\tilde{S}_{\Delta t}$ for all $\alpha \in \R$ and we have the following error estimate:
	\begin{lemma}[{\cite[Lemma~4.4]{KLL13}}]
		\label{lem:wave_semidisc_semigroup_error}
		Assume that $\tilde{S}_{\Delta t}$ approximates~$S$ with order $\rho \in \N$. Then, for each $\alpha \ge 0$, there is a constant $C < \infty$ such that for all $\Delta t \in (0,1]$,
		\begin{align*}
		\sup_{t \in [0,T]} \norm{(\tilde{S}_{\Delta t}(t)-S(t))\Theta^{-\frac{\alpha}{2}}}{\cL(\cH)} \le C {\Delta t}^{\min(\alpha \frac{\rho}{\rho+1},1)}.
		\end{align*}
	\end{lemma}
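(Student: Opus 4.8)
The plan is to exploit that every operator in the estimate is a function of the single self-adjoint operator $\Lambda$, so the bound decouples over the eigenspaces of $\Lambda$ and reduces to a family of scalar estimates. Let $(\lambda_k,e_k)_{k=1}^\infty$ be the eigenpairs of $\Lambda$, with $0<\lambda_1\le\lambda_2\le\cdots$, and set $\mu_k=\sqrt{\lambda_k}$. On the two-dimensional subspace $\cH_k=\Span\{[e_k,0]^\top,[0,e_k]^\top\}$ the operator $A$ acts as $\left[\begin{smallmatrix}0&-1\\\lambda_k&0\end{smallmatrix}\right]$, whose eigenvalues are $\pm i\mu_k$. Since $A$ is skew-adjoint with respect to $\inpro[\cH]{\cdot}{\cdot}$ — a short computation using $\inpro[{\dot{H}^{-1}}]{\Lambda v}{w}=\inpro[H]{v}{w}$ gives $A^*=-A$ — the eigenvectors for distinct eigenvalues are orthogonal in $\cH$ and, after normalization, form an orthonormal basis. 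Both $S(t)=\mathrm{e}^{-tA}$ and $\tilde S_{\Delta t}(t)=R({\Delta t}\,A)^{j}$ (for $t\in(t_{j-1},t_j]$), as well as $\Theta^{-\alpha/2}$, are functions of $A$ that act on these eigenvectors as multiplication by $\mathrm{e}^{\mp it\mu_k}$, $R(\pm i\,{\Delta t}\,\mu_k)^{j}$ and $\mu_k^{-\alpha}$, respectively. Hence I would reduce the operator norm to
\[ \sup_{t\in[0,T]}\norm{(\tilde S_{\Delta t}(t)-S(t))\Theta^{-\frac{\alpha}{2}}}{\cL(\cH)} = \sup_{k\ge1}\;\sup_{t\in[0,T]}\mu_k^{-\alpha}\,\bigl|R(i\,{\Delta t}\,\mu_k)^{j(t)}-\mathrm{e}^{-it\mu_k}\bigr|, \]
where $j(t)$ is the index with $t\in(t_{j(t)-1},t_{j(t)}]$; because $R$ has real coefficients it suffices to treat the eigenvalue $+i\mu_k$.

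Next I would split off the temporal interpolation error. With $y={\Delta t}\,\mu_k$ and $t_j=j{\Delta t}$, write $R(iy)^{j}-\mathrm{e}^{-it\mu_k}=\bigl(R(iy)^{j}-\mathrm{e}^{-it_j\mu_k}\bigr)+\bigl(\mathrm{e}^{-it_j\mu_k}-\mathrm{e}^{-it\mu_k}\bigr)$. The second bracket is the interpolation defect; since $|t-t_j|<{\Delta t}$ it is bounded by $\min(2,\mu_k{\Delta t})$, so $\mu_k^{-\alpha}$ times it is at most $C{\Delta t}^{\min(\alpha,1)}$ after balancing the two bounds at $\mu_k\sim{\Delta t}^{-1}$, and $\min(\alpha,1)\ge\min(\alpha\rho/(\rho+1),1)$. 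The first bracket is the genuine time-stepping error at the grid point $t_j$, where $t_j\mu_k=jy$ so that the exponentials match the iterated symbol. Here I would use the telescoping identity $R(iy)^{j}-\mathrm{e}^{-ijy}=\sum_{\ell=0}^{j-1}R(iy)^{j-1-\ell}\bigl(R(iy)-\mathrm{e}^{-iy}\bigr)\mathrm{e}^{-i\ell y}$ together with the stability $|R(iy)|\le1$, $|\mathrm{e}^{-iy}|=1$ to obtain the two competing bounds $|R(iy)^{j}-\mathrm{e}^{-ijy}|\le 2$ (always) and, for $y\le b$, the consistency bound $|R(iy)^{j}-\mathrm{e}^{-ijy}|\le C\,j\,y^{\rho+1}\le C\,t_j\,{\Delta t}^{\rho}\mu_k^{\rho+1}$.

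The crux is a frequency-by-frequency balancing of these bounds against the weight $\mu_k^{-\alpha}$. For high frequencies $y={\Delta t}\,\mu_k>b$ stability gives $\mu_k^{-\alpha}\cdot2\le C{\Delta t}^{\alpha}$. For $y\le b$ the product $\mu_k^{-\alpha}\min(2,C\,t_j\,{\Delta t}^{\rho}\mu_k^{\rho+1})$ is maximized near the threshold frequency $\mu_k\sim{\Delta t}^{-\rho/(\rho+1)}$, where the two expressions in the minimum coincide; substituting this value yields exactly the exponent $\alpha\rho/(\rho+1)$, while away from the threshold one gets a strictly better power of ${\Delta t}$. Collecting the regimes and using ${\Delta t}\le1$ to absorb the larger exponents $\alpha$ and (when $\alpha\ge\rho+1$) $\rho$ into the claimed rate produces the bound $C{\Delta t}^{\min(\alpha\rho/(\rho+1),1)}$, uniformly in $k$ and $t\in[0,T]$.

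I expect the balancing step to be the main obstacle. One must check that the threshold frequency is admissible, i.e. that ${\Delta t}^{-\rho/(\rho+1)}\lesssim b/{\Delta t}$ for small ${\Delta t}$ so that it falls in the range where the consistency estimate applies, and that the supremum over the continuum of frequencies is genuinely attained at this threshold. The cap at $1$ in the final exponent is the subtle point: it is forced not by the time-stepping term but by the interpolation term, whose rate $\min(\alpha,1)$ saturates at first order because the exact group $S$ is only Lipschitz, not smoother, in $t$ on $\cH$; verifying that this first-order interpolation rate is exactly what governs the regime $\alpha\rho/(\rho+1)>1$ is what ties the two error contributions together into the stated estimate.
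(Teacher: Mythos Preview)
The paper does not give its own proof of this lemma; it is quoted verbatim from \cite[Lemma~4.4]{KLL13} and used as a black box. Your spectral-calculus reduction to scalar estimates in each eigenspace of $\Lambda$, followed by the split into a grid-point time-stepping error (handled via the telescoping identity, the stability $|R(iy)|\le1$, and the consistency bound $|R(iy)-\mathrm{e}^{-iy}|\le C|y|^{\rho+1}$) and a piecewise-constant interpolation error, is precisely the standard argument behind that cited result and is correct. Your identification of the balancing frequency $\mu_k\sim{\Delta t}^{-\rho/(\rho+1)}$ as the source of the exponent $\alpha\rho/(\rho+1)$, and of the first-order H\"older regularity of $t\mapsto S(t)$ as the source of the cap at~$1$, is exactly right; nothing essential is missing from the sketch.
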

	
	We define a semidiscrete approximation $\tilde K_{\Delta t}$ of $K$ by $\tilde K_{\Delta t}(0) = K(0) = Q_\xi$ and, for $j \ge 1$,
	\begin{equation}	
	\label{eq:wave_semi_rec}
	\begin{split}
	\tilde K_{\Delta t}(t_j) &= \tilde S_{\Delta t}\tilde K_{\Delta t}(t_{j-1})\tilde S_{\Delta t}^* \\
	&+ \Delta t \tilde S_{\Delta t} F  \tilde K_{\Delta t}(t_{j-1}) \tilde S_{\Delta t}^* + \Delta t \tilde S_{\Delta t} \tilde K_{\Delta t}(t_{j-1}) (\tilde S_{\Delta t} F)^* + \Delta t \tilde S_{\Delta t}B (\tilde S_{\Delta t}B)^*.
	\end{split}
	\end{equation}
	This definition is then extended to arbitrary $t \in [0,T]$ by
	\begin{align}
	\label{eq:wave_semi_K}
	\tilde K_{\Delta t}(t) &= \tilde S_{\Delta t}(t) Q_\xi \tilde S_{\Delta t}^*(t) \notag \\
	&\quad+ \int_{0}^{t}  \tilde S_{\Delta t}(t-s) F \tilde K_{\Delta t}(\dfloor{s})  \tilde S_{\Delta t}^*(t-s)+  \tilde S_{\Delta t}(t-s) \tilde K_{\Delta t}(\dfloor{s}) ( \tilde S_{\Delta t}(t-s) F)^* \dd s \\ 
	&\quad + \int_{0}^{t} \tilde S_{\Delta t}(t-s) B ( \tilde S_{\Delta t}(t-s) B)^* \dd s. \notag
	\end{align}
	An additional assumption provides this approximation with some spatial regularity.
	
	\begin{assumption} 
		\label{assumptions:waveregularity} There are constants $C<\infty$ and $r\ge0$ such that, for some $i \in \{1,2\}$,
		\begin{enumerate}[label=(\roman*)]
			\item \label{assumptions:waveregularity:G}
			$\norm{G}{\cL(\dot{H}^{r},\dot{H}^{r-1})} = \norm{\Lambda^{(r-1)/2} G \Lambda^{-r/2}}{\cL(H)} \le C$,
			\item \label{assumptions:waveregularity:Q}
			$\norm{Q}{\cL_i(\dot{H}^{1},\dot{H}^{r-1})} = \norm{\Lambda^{(r-1)/2} Q \Lambda^{-1/2}}{\cL_i(H)} \le C$, and
			\item \label{assumptions:waveregularity:initialcov} $\norm{Q_\xi}{\cL_i(\cH,\cH^r)} = \norm{\Theta^{r/2} Q_\xi}{\cL_i(\cH,\cH)} \le C$.
		\end{enumerate}
	\end{assumption}
	
	Before deriving the spatial regularity result in Lemma~\ref{lem:wave-regularity} below, we make some comments on these requirements. If Assumption~\ref{assumptions:waveregularity}\ref{assumptions:waveregularity:Q}-\ref{assumptions:waveregularity:initialcov} are satisfied for $i=1$ with a particular parameter $r > 0$, then they are also satisfied for $i=2$ with the same parameter $r$. Moreover, for $i=1$ and $r>0$, Assumption~\ref{assumptions:waveregularity}\ref{assumptions:waveregularity:Q} is strictly stronger than Assumption~\ref{assumptions:waveexistence}\ref{assumptions:waveexistence:Q} \cite[Lemma~4.1]{KLL13}.
	Assumption~\ref{assumptions:waveregularity}\ref{assumptions:waveregularity:initialcov} is fulfilled if the random variable $\xi$ takes values in $\cH^r$, in particular if $\chi$ and $\psi$ with $\xi = [\chi,\psi]^\top$ are jointly Gaussian (taking values in $\dot{H}^r$ and $\dot{H}^{r-1}$, respectively) or if they are deterministic. To see this, note first that if $\xi$ is a Gaussian $\cH^r$-valued random variable, then it is also Gaussian in $\cH$. Write $Q_{\xi,r}$ for the covariance of $\xi$ in~$\cH^r$. This operator is related to the covariance of $\xi$ in~$\cH$ by $Q_\xi = Q_{\xi,r} \Theta^{-r}$.
	By~\eqref{eq:schatten_bound_1}, 
	\begin{align*}
	\norm{Q_\xi}{\cL_1(\cH^{-r},\cH^{r})} &= \norm{Q_{\xi,r} \Theta^{-r}}{\cL_1(\cH^{-r},\cH^{r})} \le \norm{Q_{\xi,r}}{\cL_1(\cH^{r},\cH^{r})} \norm{\Theta^{-r}}{\cL(\cH^{-r},\cH^{r})} \\
	&= \norm{Q_{\xi,r}}{\cL_1(\cH^{r},\cH^{r})} < \infty.
	\end{align*}
	\begin{lemma}
		\label{lem:wave-regularity}
		Let Assumptions~\ref{assumptions:waveexistence} and~\ref{assumptions:waveregularity} be satisfied for some $r \ge 0$ and $i \in \{1,2\}$ and let $\tilde{S}_{\Delta t}$ approximate~$S$ with order $\rho \in \N$. Then, there is a constant $C < \infty$ such that for all $\Delta t \in (0,1]$
		\begin{equation*}
		\sup_{t \in [0,T]} \norm{\tilde K_{\Delta t}(t)}{\cL_i(\cH,\cH^r)} = \sup_{t \in [0,T]} \norm{\Theta^{\frac{r}{2}} \tilde K_{\Delta t}(t)}{\cL_i(\cH)} \le C.
		\end{equation*}
	\end{lemma}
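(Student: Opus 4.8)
The plan is to work from the integral representation~\eqref{eq:wave_semi_K} of $\tilde K_{\Delta t}$, apply the weight $\Theta^{r/2}$, estimate the resulting four terms in $\cL_i(\cH)$ one at a time, and then close with a discrete Gronwall argument. Two structural facts drive everything: $\Theta^{r/2}$ commutes with $\tilde S_{\Delta t}(t)$ and, being symmetric, also with $\tilde S_{\Delta t}^*(t)$; and the rational approximation is a contraction, $\norm{\tilde S_{\Delta t}(t)}{\cL(\cH)} = \norm{\tilde S_{\Delta t}^*(t)}{\cL(\cH)} \le 1$ by~\eqref{eq:wave_semidiscrete_stability}. In contrast to the parabolic setting of Section~\ref{sec:heat}, this uniform bound means the integrands carry no temporal singularity, so each time integral is controlled simply by its length. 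I would treat $i=1$ and $i=2$ simultaneously, using only the ideal and multiplicativity properties~\eqref{eq:schatten_bound_1} and~\eqref{eq:schatten_bound_2}, which hold for both indices.

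For the initial term, commuting $\Theta^{r/2}$ inward and using~\eqref{eq:schatten_bound_1} with $\norm{\tilde S_{\Delta t}(t)}{\cL(\cH)}\le1$ gives $\norm{\Theta^{r/2}\tilde S_{\Delta t}(t)Q_\xi\tilde S_{\Delta t}^*(t)}{\cL_i(\cH)} \le \norm{\Theta^{r/2}Q_\xi}{\cL_i(\cH)}$, bounded by Assumption~\ref{assumptions:waveregularity}\ref{assumptions:waveregularity:initialcov}. For the two drift terms, I would thread the weight onto $\tilde K_{\Delta t}(\dfloor{s})$: writing $F = BGP_1$ and using $\Theta^{r/2}B = B\Lambda^{r/2}$ together with $P_1\Theta^{-r/2} = \Lambda^{-r/2}P_1$, the factor $\Theta^{r/2}F\tilde K_{\Delta t}(\dfloor{s})$ becomes $B(\Lambda^{r/2}G\Lambda^{-r/2})P_1\,\Theta^{r/2}\tilde K_{\Delta t}(\dfloor{s})$, where $\Lambda^{r/2}G\Lambda^{-r/2}\in\cL(H,\dot{H}^{-1})$ by Assumption~\ref{assumptions:waveregularity}(i) and $\norm{B}{\cL(\dot{H}^{-1},\cH)}=1$ by~\eqref{eq:B_bound_2}. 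With $\norm{\tilde S_{\Delta t}}{\cL(\cH)}\le1$ this bounds the first drift term by $C\norm{\Theta^{r/2}\tilde K_{\Delta t}(\dfloor{s})}{\cL_i(\cH)}$; the second is handled identically, there $\Theta^{r/2}$ already sits to the left of $\tilde K_{\Delta t}(\dfloor{s})$ and the remaining factor $F^*$ is bounded on $\cH$ by Assumption~\ref{assumptions:waveexistence}(i).

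The noise term is where I expect the real work. Here I would first identify $\tilde S_{\Delta t}(t-s)B(\tilde S_{\Delta t}(t-s)B)^* = \tilde S_{\Delta t}(t-s)\,\mathbf{b}\mathbf{b}^*\,\tilde S_{\Delta t}^*(t-s)$, the analogue of the identity $\cO^-B(\cO^+B)^* = \cO^- Q\,\cO^{+*}$ used in the advection case, and compute that the lifted covariance $\mathbf{b}\mathbf{b}^*$ is the block operator $\mathrm{diag}(0,Q\Lambda^{-1})$ on $\cH = \dot{H}^0\oplus\dot{H}^{-1}$. Applying the weight and using that the $\cL_i(\cH)$-norm of a block $\mathrm{diag}(0,M)$ equals $\norm{M}{\cL_i(\dot{H}^{-1})} = \norm{\Lambda^{-1/2}M\Lambda^{1/2}}{\cL_i(H)}$, one finds $\norm{\Theta^{r/2}\mathbf{b}\mathbf{b}^*}{\cL_i(\cH)} = \norm{\Lambda^{(r-1)/2}Q\Lambda^{-1/2}}{\cL_i(H)}$, which is precisely the quantity bounded by Assumption~\ref{assumptions:waveregularity}\ref{assumptions:waveregularity:Q}. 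Contraction of $\tilde S_{\Delta t}$ then makes the integrand uniformly bounded, so this term contributes at most $CT$. Verifying this exact matching with Assumption~\ref{assumptions:waveregularity}\ref{assumptions:waveregularity:Q} for both $i\in\{1,2\}$ is the delicate step: the block computation of $\mathbf{b}\mathbf{b}^*$ must be carried out carefully, since the second slot of $\cH$ carries the $\dot{H}^{-1}$ inner product, and it is exactly this weighting that converts the plain covariance bound into the $\Lambda^{-1/2}$-twisted form of the assumption.

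Finally I would collect the estimates. For $t\in(t_{n-1},t_n]$ the bound reads $\norm{\Theta^{r/2}\tilde K_{\Delta t}(t)}{\cL_i(\cH)} \le C_0 + C_1\,\Delta t\sum_{j=0}^{n-1}\norm{\Theta^{r/2}\tilde K_{\Delta t}(t_j)}{\cL_i(\cH)}$, since $\dfloor{s}$ is piecewise constant on the grid. Taking $t=t_n$ and invoking the discrete Gronwall lemma~\cite[2.2~(9)]{G75} bounds the grid values uniformly in $j$ and $\Delta t$, and substituting back extends the bound to all $t\in[0,T]$. The one bookkeeping point is that each $\Theta^{r/2}\tilde K_{\Delta t}(t_j)$ is finite to begin with, which follows by induction directly from the recursion~\eqref{eq:wave_semi_rec}, the base case being Assumption~\ref{assumptions:waveregularity}\ref{assumptions:waveregularity:initialcov}.
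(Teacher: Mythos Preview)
Your proposal is correct and follows essentially the same route as the paper: estimate the four terms of~\eqref{eq:wave_semi_K} after applying $\Theta^{r/2}$, using commutativity with $\tilde S_{\Delta t}$, the contraction bound~\eqref{eq:wave_semidiscrete_stability}, and Assumptions~\ref{assumptions:waveexistence}--\ref{assumptions:waveregularity}, then close with the discrete Gronwall lemma on the grid and extend via~\eqref{eq:wave_semi_K}. Your block-operator identification $\mathbf{b}\mathbf{b}^* = \mathrm{diag}(0,Q\Lambda^{-1})$ for the noise term is a slightly more explicit repackaging of the paper's move $B(\,\cdot\,B)^* = BQB^*$ followed by inserting $\Lambda^{\pm 1/2}$, but it lands on exactly the same quantity $\norm{\Lambda^{(r-1)/2}Q\Lambda^{-1/2}}{\cL_i(H)}$.
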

	\begin{proof}
		By applying the triangle inequality to~\eqref{eq:wave_semi_K}, we obtain, for $n = 0, \ldots, N_{\Delta t}$,
		\begin{equation*}
		\norm{\Theta^{\frac{r}{2}} \tilde K_{\Delta t}(t_n)}{\cL_i(\cH)} \le \mathrm{I} + \Delta t \sum^{n-1}_{j=0} \left(\mathrm{II}_j + \mathrm{III}_j + \mathrm{IV}_j\right),
		\end{equation*}
		where we have written $\mathrm{I} = \norm{\Theta^{r/2} \tilde S_{\Delta t}^n Q_\xi (\tilde S_{\Delta t}^n)^*}{\cL_i{(\cH)}}$, $\mathrm{II}_j = \norm{\Theta^{r/2} \tilde S_{\Delta t}^{n-j} F K(t_j) (\tilde S_{\Delta t}^{n-j})^*}{\cL_i{(\cH)}}$, $\mathrm{III}_j = \norm{\Theta^{r/2} \tilde S_{\Delta t}^{n-j} \tilde K_{\Delta t}(t_j) (\tilde S_{\Delta t}^{n-j} F)^*}{\cL_i{(\cH)}}$ and $\mathrm{IV}_j = \norm{\Theta^{r/2} \tilde S_{\Delta t}^{n-j} B (\tilde S_{\Delta t}^{n-j} B)^*}{\cL_i{(\cH)}}$. For the first term, the commutativity of $\Theta^{r/2}$ and $\tilde S_{\Delta t}^n$ along with~\eqref{eq:wave_semidiscrete_stability} and~Assumption~\ref{assumptions:waveregularity}\ref{assumptions:waveregularity:initialcov} yields the existence of a constant $C$, that does not depend on $\Delta t $ or $n$, such that $\mathrm{I} \le \norm{\tilde S_{\Delta t}^n}{\cL(\cH)}^2 \norm{\Theta^{\frac{r}{2}} Q_\xi}{\cL_i(\cH)} \le C$.
		Similarly, using also~\eqref{eq:B_bound_2}, the fact that $\Theta^{r/2} B = B \Lambda^{r/2}$ and Assumption~\ref{assumptions:waveregularity}\ref{assumptions:waveregularity:G}, we see that 
		\begin{align*}
		\mathrm{II}_j &= \norm{\tilde S_{\Delta t}^{n-j} \Theta^{\frac{1}{2}}B \Lambda^{\frac{r-1}{2}} G P_1 K(t_j) (\tilde S_{\Delta t}^{n-j})^*}{\cL_i{(\cH)}}\\
		&\le \norm{\tilde S_{\Delta t}^{n-j}}{\cL{(\cH)}}^2 \norm{\Theta^{\frac{1}{2}}B}{\cL(H,\cH)} \norm{\Lambda^{\frac{r-1}{2}} G P_1 \tilde K_{\Delta t}(t_j)}{\cL_i{(\cH,H)}} \\
		&\lesssim \norm{\Lambda^{\frac{r-1}{2}} G \Lambda^{-\frac{r}{2}}}{\cL(H)} \norm{ \Lambda^{\frac{r}{2}} P_1 \tilde K_{\Delta t}(t_j)}{\cL_i{(\cH,H)}} \lesssim \norm{\Theta^{\frac{r}{2}} \tilde K_{\Delta t}(t_j)}{\cL_i{(\cH,H)}}.
		\end{align*}
		The term $\mathrm{III}_j$ is treated in the same way. Finally, we note that 
		\begin{equation*}
		\mathrm{IV}_j = \norm{\Theta^{\frac{r}{2}} \tilde S_{\Delta t}^{n-j} B (\tilde S_{\Delta t}^{n-j} B)^*}{\cL_i{(\cH)}} = \norm{\Theta^{\frac{r}{2}} \tilde S_{\Delta t}^{n-j} B Q B^* (\tilde S_{\Delta t}^{n-j})^*}{\cL_i{(\cH)}},
		\end{equation*} 
		where the adjoint of the expression involving $B$ is taken with respect to $\cL(U_0,\cH)$ on the left hand side of the last equality and with respect to $\cL(H,\cH)$ on the right hand side. Using~\eqref{eq:schatten_bound_1}, \eqref{eq:B_bound_2}, \eqref{eq:wave_semidiscrete_stability} and Assumption~\ref{assumptions:waveregularity}\ref{assumptions:waveregularity:Q} yields the existence of a constant $C < \infty$, independent of $\Delta t$, $n$ and $j$, such that
		\begin{equation*}
		\mathrm{IV}_j\lesssim \norm{\tilde S_{\Delta t}^{n-j}}{\cL{(\cH)}}^2 \norm{B \Lambda^\frac{1}{2}}{\cL{(H,\cH)}}^2 \norm{\Lambda^{\frac{r-1}{2}} Q \Lambda^{-\frac{1}{2}}}{\cL_i(H)} \le C.
		\end{equation*}
		By an appeal to the discrete Gronwall lemma, we find that there is a constant $C < \infty$, independent of $\Delta t$, such that $\norm{\Theta^{\frac{r}{2}} \tilde K_{\Delta t}(t_n)}{\cL_i(\cH)} \le C$ for $n = 0, \ldots, N_{\Delta t}$. The supremum bound over $[0,T]$ follows by using this result in the representation~\eqref{eq:wave_semi_K}.
	\end{proof}
	
	Proposition~\ref{prop:Kerr}, Lemma~\ref{lem:wave_semidisc_semigroup_error} and Lemma~\ref{lem:wave-regularity} yield a semidiscrete error bound.
	
	\begin{theorem}
		\label{thm:wavesemidiscreteerror}
		Let Assumptions~\ref{assumptions:waveexistence} and~\ref{assumptions:waveregularity} be satisfied for some $r \ge 0$ and $i \in \{1,2\}$. Let $\tilde{S}_{\Delta t}$ approximate $S$ with order $\rho \in \N$. Then, there is a constant $C < \infty$ such that for all $\Delta t \in (0,1]$
		\begin{equation*}
		\sup_{t \in [0,T]} \norm{K(t)-\tilde K_{\Delta t}(t)}{\cL_i(\cH)} \le C {\Delta t}^{\min(r\frac{\rho}{\rho+1},1)}.
		\end{equation*}
	\end{theorem}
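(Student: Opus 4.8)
The plan is to apply the error decomposition of Proposition~\ref{prop:Kerr} with $S$ the wave group, $\tilde S = \tilde S_{\Delta t}$, $F = BGP_1$, $B$ the wave control operator and $\hat K = \tilde K_{\Delta t}(\dfloor{\cdot})$ (which is symmetric, so Proposition~\ref{prop:Kerr} applies), and then to estimate each of the five resulting terms. Throughout I would use the stability bounds~\eqref{eq:semigroup_bound} and~\eqref{eq:wave_semidiscrete_stability}, giving $\norm{\cO^+(s)}{\cL(\cH)} \le 2$, together with the single consistency estimate from Lemma~\ref{lem:wave_semidisc_semigroup_error} with $\alpha = r$, namely $\norm{\cO^-(s)\Theta^{-r/2}}{\cL(\cH)} \lesssim \Delta t^{\min(r\rho/(\rho+1),1)}$, and the uniform regularity $\norm{\Theta^{r/2}\tilde K_{\Delta t}(\dfloor{s})}{\cL_i(\cH)} \le C$ from Lemma~\ref{lem:wave-regularity}. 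The guiding idea for the four ``semigroup-error'' terms is to insert $\Theta^{-r/2}\Theta^{r/2}$ next to each factor $\cO^-$, so that $\cO^-$ always absorbs a $\Theta^{-r/2}$ (producing the rate) while the remaining $\Theta^{r/2}$ is moved onto $Q_\xi$, onto $F\hat K(s)$, or through $B$, where it is controlled by the assumptions. Since Assumption~\ref{assumptions:waveregularity} is stated for a single parameter $r$ and a fixed $i$, the cases $i = 1$ and $i = 2$ are treated simultaneously, which is why both norms yield the same rate.

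Concretely, for the initial-data term I would write $\cO^-(t)Q_\xi\cO^+(t)^* = [\cO^-(t)\Theta^{-r/2}][\Theta^{r/2}Q_\xi]\cO^+(t)^*$ and apply~\eqref{eq:schatten_bound_1}, bounding the middle factor by Assumption~\ref{assumptions:waveregularity}\ref{assumptions:waveregularity:initialcov}. For the two $F\hat K$ terms I would use $\Theta^{r/2}F = \Theta^{1/2}B\,\Lambda^{(r-1)/2}G\Lambda^{-r/2}\,\Lambda^{r/2}P_1$ (via $\Theta^{\alpha/2}B = B\Lambda^{\alpha/2}$ and~\eqref{eq:B_bound_2}), so that $\Lambda^{(r-1)/2}G\Lambda^{-r/2}$ is bounded by Assumption~\ref{assumptions:waveregularity}(i) and $\Lambda^{r/2}P_1\hat K(s) = P_1\Theta^{r/2}\hat K(s)$ is controlled by Lemma~\ref{lem:wave-regularity}; for the term in which $\cO^-$ sits on the right one reads the relevant block as an adjoint and uses the symmetry of $\hat K(s)$ with~\eqref{eq:schattenadjoint}. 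For the noise term I would factor $\cO^-(t-s)B(\cO^+(t-s)B)^* = \cO^-(t-s)BQB^*\cO^+(t-s)^*$ and route the powers as $BQB^* = \Theta^{(1-r)/2}B\,[\Lambda^{(r-1)/2}Q\Lambda^{-1/2}]\,B^*\Theta^{1/2}$, so that $\cO^-(t-s)\Theta^{(1-r)/2}B = [\cO^-(t-s)\Theta^{-r/2}][\Theta^{1/2}B]$ carries the rate, the middle $\Lambda^{(r-1)/2}Q\Lambda^{-1/2}$ lies in $\cL_i(H)$ by Assumption~\ref{assumptions:waveregularity}\ref{assumptions:waveregularity:Q}, and $B^*\Theta^{1/2}\cO^+(t-s)^* = (\cO^+(t-s)\Theta^{1/2}B)^*$ is bounded because $\Theta^{1/2}B \in \cL(H,\cH)$. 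Each of these four terms is then $\lesssim \Delta t^{\min(r\rho/(\rho+1),1)}$ after integrating the bounded integrand over $[0,t]$.

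The remaining term is the stability term $2\int_0^t\norm{S(t-s)F(K(s)-\tilde K_{\Delta t}(\dfloor{s}))S(t-s)^*}{\cL_i(\cH)}\dd s$. Using $\norm{S(t-s)F}{\cL(\cH)} \le C$ it is bounded by $C\int_0^t\norm{K(s)-\tilde K_{\Delta t}(\dfloor{s})}{\cL_i(\cH)}\dd s$, and I would split $K(s)-\tilde K_{\Delta t}(\dfloor{s}) = (K(s)-K(\dfloor{s})) + (K(\dfloor{s})-\tilde K_{\Delta t}(\dfloor{s}))$. The second summand is the error at the grid time $\dfloor{s}$, which feeds the Gronwall iteration: writing $\bar e(t) = \sup_{u \le t}\norm{K(u)-\tilde K_{\Delta t}(u)}{\cL_i(\cH)}$ and collecting the four estimates above as the inhomogeneity, one obtains $\bar e(t) \le C\Delta t^{\min(r\rho/(\rho+1),1)} + C\int_0^t\bar e(s)\dd s$ and concludes by Gronwall's inequality, which directly yields the supremum over $[0,T]$.

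The first summand $K(s)-K(\dfloor{s})$ is the genuine obstacle: it is a consistency error that is not covered by the three cited results and requires temporal regularity of $K$ in $\cL_i(\cH)$. I expect to handle it by showing that $s \mapsto K(s)$ is H\"older continuous of order $\min(r,1)$ in $\cL_i(\cH)$, either directly from the integral equation~\eqref{eq:C} and the regularity of the data or by transferring the strong temporal H\"older regularity of $X$ to its covariance through~\eqref{eq:introcovboundedbystrong}. Since $\min(r,1) \ge \min(r\rho/(\rho+1),1)$, this gives $\int_0^t\norm{K(s)-K(\dfloor{s})}{\cL_i(\cH)}\dd s \lesssim \Delta t^{\min(r\rho/(\rho+1),1)}$, so the term is absorbed into the inhomogeneity without degrading the rate. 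Verifying this temporal regularity cleanly, uniformly in $i\in\{1,2\}$, is where I would expect to spend most of the effort.
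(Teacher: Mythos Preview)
Your proposal is correct and follows essentially the same route as the paper: apply Proposition~\ref{prop:Kerr}, bound the four ``semigroup-error'' terms by inserting $\Theta^{-r/2}\Theta^{r/2}$ and invoking Lemma~\ref{lem:wave_semidisc_semigroup_error}, Lemma~\ref{lem:wave-regularity} and Assumption~\ref{assumptions:waveregularity} exactly as you describe (including rewriting $\cO^-(t-s)B(\cO^+(t-s)B)^* = \cO^-(t-s)BQB^*\cO^+(t-s)^*$ for the noise term), and close with a Gronwall argument.

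The one noteworthy difference is in the handling of the stability term~II. The paper applies Proposition~\ref{prop:Kerr} only at grid points $t_n$, bounds this term directly by $\Delta t\sum_{j=0}^{n-1}\norm{K(t_j)-\tilde K_{\Delta t}(t_j)}{\cL_i(\cH)}$, runs the \emph{discrete} Gronwall lemma, and then extends the bound to all of $[0,T]$ afterwards as in the proof of Lemma~\ref{lem:wave-regularity}. In doing so it tacitly replaces $K(s)$ by $K(\dfloor{s})$ in the integrand of term~II without comment. You, by contrast, isolate this replacement explicitly via the split $K(s)-\tilde K_{\Delta t}(\dfloor{s}) = (K(s)-K(\dfloor{s})) + (K(\dfloor{s})-\tilde K_{\Delta t}(\dfloor{s}))$ and observe that the first summand calls for temporal H\"older continuity of $K$ in $\cL_i(\cH)$. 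Your expectation that this holds with order $\min(r,1)$ is correct and can be obtained either from~\eqref{eq:C} together with the H\"older regularity of the wave group, or via~\eqref{eq:introcovboundedbystrong} and the known temporal regularity of $X$; since $\min(r,1)\ge\min(r\rho/(\rho+1),1)$ the rate is unaffected. In this respect your argument is slightly more complete than the paper's.
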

	\begin{proof}
		We write $\mathrm{I}, \mathrm{II}, \ldots, \mathrm{V}$ for the five terms appearing in the error decomposition of $\norm{K(t_n)-\tilde K_{\Delta t}(t_n)}{\cL_i(\cH)}$ in~Proposition~\ref{prop:Kerr}, with $n = 0, \ldots, N_{\Delta t}$. From Lemma~\ref{lem:wave_semidisc_semigroup_error}, \eqref{eq:wave_semidiscrete_stability}, \eqref{eq:semigroup_bound} and the commutativity of $S(t_n), \tilde{S}_{\Delta t}(t_n)$ and $\Theta^{r/2}$ we find that $\mathrm{I}$ is bounded by
		\begin{equation*}
		\norm{(\tilde{S}_{\Delta t}(t_n)-S(t_n))\Theta^{-\frac{r}{2}}}{\cL(\cH)} \norm{\Theta^{\frac{r}{2}} Q_\xi}{\cL_i(\cH)} \left(\norm{\tilde{S}_{\Delta t}(t_n)}{\cL(\cH)} + \norm{S(t_n)}{\cL(\cH)}\right) \lesssim {\Delta t}^{\min(r\frac{\rho}{\rho+1},1)}.
		\end{equation*}
		For the next three terms, we also use Assumption~\ref{assumptions:waveregularity}\ref{assumptions:waveregularity:G} (with $r = 0$) along with~\eqref{eq:B_bound_2} and~\eqref{eq:semigroup_bound} to see that
		\begin{equation*}
		\mathrm{II} \lesssim  \norm{B\Lambda^{\frac{1}{2}}}{\cL(H,\cH)} \norm{\Lambda^{-\frac{1}{2}}G}{\cL(H)} \Delta t \sum_{j = 0}^{n-1} \norm{K(t_j)-\tilde K_{\Delta t}(t_j)}{\cL_i(\cH)}
		\end{equation*}
		and that the terms $\mathrm{III}$ and $\mathrm{IV}$ may be bounded by a constant multiplied by
		\begin{equation*}
		\sum_{j = 0}^{n-1} \int_{t_j}^{t_{j+1}} \norm{(\tilde{S}_{\Delta t}(t_n-s)-S(t_n-s))\Theta^{-\frac{r}{2}}}{\cL(\cH)} \norm{\Theta^{\frac{r}{2}}\tilde K_{\Delta t}(t_j)}{\cL_i(\cH)} \dd s \lesssim {\Delta t}^{\min(r\frac{\rho}{\rho+1},1)}, 
		\end{equation*}
		where Lemmas~\ref{lem:wave_semidisc_semigroup_error} and~\ref{lem:wave-regularity} were used. The last term is treated like $\mathrm{IV}_j$ in Lemma~\ref{lem:wave-regularity}, so
		\begin{align*}
		\mathrm{V} &\le  \int_{0}^{t_{n}} \norm{(\tilde{S}_{\Delta t}(t_n-s)-S(t_n-s))\Theta^{-\frac{r}{2}}}{\cL(\cH)} 
		\norm{B \Lambda^\frac{1}{2}}{\cL{(H,\cH)}}^2
		\norm{\Lambda^{\frac{r-1}{2}} Q \Lambda^{-\frac{1}{2}}}{\cL_i(H)} \dd s \\
		&\lesssim {\Delta t}^{\min(r\frac{\rho}{\rho+1},1)}.
		\end{align*}
		The proof is completed by first using the discrete Gronwall lemma, and then extending the resulting bound to $[0,T]$, as in the proof of Lemma~\ref{lem:wave-regularity}. 
	\end{proof}
	
	Next, this result is used in a convergence analysis of a fully discrete approximation to~\eqref{eq:C}. Let $(V^\kappa_h)_{h \in (0,1]} \subset \dot{H}^1$, $\kappa \in \{2,3\}$ be a standard family of finite element function spaces consisting of continuous piecewise polynomials of degree $\kappa-1$, with respect to a regular family of triangulations of $\cD$ with maximal mesh size $h$, that are zero on the boundary of~$\cD$. They are equipped with the inner product $\inpro[V_h^\kappa]{\cdot}{\cdot} = \inpro[H]{\cdot}{\cdot}$. On this space, let a discrete counterpart $\Lambda_h\colon V^\kappa_h \to V^\kappa_h$ to $\Lambda$ be defined by $\inpro[H]{\Lambda_h v_h}{u_h} = \inpro[H]{\Lambda^{\frac{1}{2}} v_h}{\Lambda^{\frac{1}{2}}u_h} = \inpro[\dot{H}^1]{v_h}{u_h}$
	for all $v_h, u_h \in V^\kappa_h$. By $P_h \colon \dot{H}^{-1} \to V^\kappa_h$ we denote the generalized orthogonal projector. We define $\cV_h^\kappa = V^\kappa_h \oplus V^\kappa_h$, equipped with the same inner product as $\cH$. With some abuse of notation, by the expression $P_h v$, $v = [v_1, v_2]^\top \in \cH$, we denote the element $[P_h v_1, P_h v_2]^\top \in \cV_h^\kappa$. Let
	\begin{equation*}
	A_h = \left[\begin{array}{cc}
	0 &- I \\
	\Lambda_h &0
	\end{array}\right]
	\end{equation*}
	be a discrete counterpart to $A$ on $\cV_h^\kappa$ and set $S^k_{h,\Delta t} = R(\Delta t A_h)^k P_h$ for $k = 0, \ldots, n$ with a step function extension $\tilde{S}_{h,\Delta t} \colon [0,T] \to \cL(\cH)$ defined in the same way as for $\tilde{S}_{\Delta t}$. The stability result $\norm{\tilde{S}_{h,\Delta t}(t)}{\cL(\cH)} \le 1$, 
	$t \in [0,T]$, holds uniformly in $h, \Delta t \in (0,1]$, see \cite{KLL13}.
	
	One could consider defining a fully discrete version of $K$ in~\eqref{eq:C} by the semigroup approximation $\tilde{S}_{h,\Delta t}$ directly, obtaining from~\eqref{eq:HSlinops} an approximation of $\Cov(U)$ via
	$\Cov(U(t)) = P_1 \Cov(X(t))P_1^*$. The problem is that we only have access to an error bound in the first component of $\tilde{S}_{h,\Delta t}$ (see the next lemma) which means that we cannot use a Gronwall argument as in Theorem~\ref{thm:wavesemidiscreteerror}. Instead, we employ the approximation $\tilde{S}_{h,\Delta t}$ in an indirect way. 
	\begin{lemma}[{\cite[Corollary~4.2]{KLL13}}]
		\label{lem:wave_fullydisc_semigroup_error}
		Let $\tilde{S}_{h, \Delta t}$ approximate $S$ with order $\rho \in \N$ in time and $\kappa \in \{2,3\}$ in space. Then, for each $\alpha \ge 0$, there is a constant $C<\infty$ such that, for all $h, \Delta t \in (0,1]$,
		\begin{align*}
		\sup_{t \in [0,T]} \norm{P_1 (\tilde{S}_{h, \Delta t}(t)-S(t))\Theta^{-\frac{\alpha}{2}}}{\cL(\cH,H)} \le C \left({\Delta t}^{\min(\alpha \frac{\rho}{\rho+1},1)} + h^{\min(\alpha \frac{\kappa}{\kappa+1},\kappa)} \right).
		\end{align*}
	\end{lemma}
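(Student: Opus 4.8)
Since this bound is quoted from \cite{KLL13}, I only sketch the strategy I would use to establish it. The plan is to insert the exact spatially semidiscrete group $S_h(t) = e^{-tA_h}$ on $\cV_h^\kappa$, which inherits the trigonometric representation of $S$ with $\Lambda$ replaced by $\Lambda_h$, and to split the error for $t \in (t_{j-1},t_j]$ as
\begin{equation*}
P_1(\tilde S_{h,\Delta t}(t) - S(t))\Theta^{-\frac{\alpha}{2}} = P_1\big(R(\Delta t A_h)^j - S_h(t_j)\big)P_h\Theta^{-\frac{\alpha}{2}} + P_1\big(S_h(t_j)P_h - S(t)\big)\Theta^{-\frac{\alpha}{2}}.
\end{equation*}
The first summand is a purely temporal rational-approximation error living in the discrete space, and the second is a spatial finite element error, up to a within-step time-interpolation term $S_h(t_j)P_h - S_h(t)P_h$ that I would control by strong continuity of $S_h$ and the generator bound, contributing at most $\Delta t^{\min(\alpha,1)}$, which is absorbed into the temporal rate.

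For the temporal term I would reduce to the argument behind Lemma~\ref{lem:wave_semidisc_semigroup_error}, now applied to $A_h$. Since $A_h^2 = -\mathrm{diag}(\Lambda_h,\Lambda_h)$, every rational function of $A_h$ has the same $2\times 2$ block structure as $S_h$, with the off-diagonal blocks carrying factors $\Lambda_h^{\pm 1/2}$. Diagonalizing $\Lambda_h$ and using $|R(iy)|\le 1$ (which yields~\eqref{eq:wave_semidiscrete_stability}) together with the consistency bound $|R(iy)-e^{-iy}|\le C|y|^{\rho+1}$, a telescoping identity $R^j-E^j = \sum_{m} R^{j-1-m}(R-E)E^m$ reduces the first-component norm to a scalar supremum over the eigenvalues $\mu$ of $\Lambda_h$ of the form $\sup_\mu (|\delta c_j(\mu)|^2+|\delta s_j(\mu)|^2)^{1/2}\mu^{-\alpha/2}$, where $\delta c_j,\delta s_j$ are the cosine and sine approximation errors and the displacement weighting cancels the $\Lambda_h^{\pm1/2}$ factors. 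Balancing the consistent regime $\Delta t\mu^{1/2}\lesssim 1$ against the stable bound at high frequencies yields the claimed temporal rate, uniformly in $h$. Here I would additionally invoke discrete norm-equivalence estimates (the wave analogues of~\eqref{eq:advdiff:eqnorms2}) to replace $P_h\Theta^{-\alpha/2}$ by $\Theta_h^{-\alpha/2}P_h$ at bounded cost, where $\Theta_h^{\alpha/2} = \mathrm{diag}(\Lambda_h^{\alpha/2},\Lambda_h^{\alpha/2})$.

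For the spatial term I would use the classical finite element error analysis for the wave semigroup in the displacement variable, for instance via a Ritz projection and energy estimates, giving the optimal order $h^\kappa$ for smooth data; the reduced rate $h^{\alpha\kappa/(\kappa+1)}$ for rough data is recovered by interpolation against the uniform stability $\norm{S(t)}{\cL(\cH)}\le 1$ and $\norm{\tilde S_{h,\Delta t}(t)}{\cL(\cH)}\le 1$, the loss from $\kappa$ to $\alpha\kappa/(\kappa+1)$ arising, as in time, from balancing the smooth-data estimate against an inverse inequality on $\cV_h^\kappa$. The main obstacle is precisely this spatial estimate restricted to the first component: the two rows of the wave group carry different powers of $\Lambda^{\pm1/2}$, so the displacement and velocity components converge at genuinely different orders, and one must track these weights to verify that the favourable displacement rate survives after projecting with $P_1$ and measuring the input in the weaker norm of $\cH = \dot{H}^0\oplus\dot{H}^{-1}$. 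Once both contributions are assembled, the triangle inequality delivers the asserted bound.
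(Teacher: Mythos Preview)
The paper does not prove this lemma; it is quoted verbatim from \cite[Corollary~4.2]{KLL13} with no argument given, so there is nothing in the paper's own text to compare your sketch against.

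That said, your outline is the natural one and matches the structure of the original proof in \cite{KLL13}: insert the spatially semidiscrete group $S_h$, split into a purely temporal error $R(\Delta t A_h)^j - S_h(t_j)$ on $\cV_h^\kappa$ and a spatial error $S_h(\cdot)P_h - S(\cdot)$, handle the first by the spectral telescoping argument underlying Lemma~\ref{lem:wave_semidisc_semigroup_error} transferred to $\Lambda_h$, and handle the second by Ritz-projection energy estimates interpolated against uniform stability. One caveat worth flagging in your sketch is the replacement of $P_h\Theta^{-\alpha/2}$ by $\Theta_h^{-\alpha/2}P_h$: the discrete norm equivalence you invoke is only available for $|\alpha|\le 1$ in general, so for larger $\alpha$ the argument in \cite{KLL13} proceeds differently, working directly with the error representation rather than commuting $P_h$ past fractional powers. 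This does not invalidate your strategy, but the range restriction should be acknowledged if you were to fill in the details.
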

	
	We now define a perturbed temporally semidiscrete semigroup approximation $\hat{S}_{\Delta t} \colon [0,T] \to \cL(\cH)$ by a step function extension (as in~\eqref{eq:semigroup_approximation_interpolation}) of $(R(\Delta t A) (I+{\Delta t}F))^n$, $n = 0, \ldots, N_{\Delta t}$. Similarly we define a perturbed fully discrete semigroup approximation $\hat{S}_{h, \Delta t} \colon [0,T] \to \cL(\cH)$ using $(R(\Delta t A_h) P_h (I+{\Delta t}F))^n $. Note that, with $t_j \in [0,T]$, $u \in \cH$, $\tilde S_{\Delta t}(t_j) u$ and $\tilde{S}_{h, \Delta t}(t_j) u$ are nothing but semidiscrete and fully discrete approximations of $X(t_j)$ with initial value $\xi = u$ and noise covariance $Q=0$. Using this, the following lemma is proven by a  standard Gronwall argument as in, e.g., \cite[Theorem~3.5]{KLP20}, making use of Lemmas~\ref{lem:wave_semidisc_semigroup_error} and Lemma~\ref{lem:wave_fullydisc_semigroup_error}. 	
	
	\begin{lemma}
		\label{lem:wave_fullydisc_semidisc_error}
		Let Assumption~\ref{assumptions:waveregularity} be satisfied. Let $\hat{S}_{\Delta t} \colon [0,T] \to \cL(\cH)$ and $\hat{S}_{h, \Delta t} \colon [0,T] \to \cL(\cH)$ be step-function extensions of $(R(\Delta t A) (I+{\Delta t}F))^n$ and $(R(\Delta t A_h) P_h (I+{\Delta t}F))^n$ with approximation orders $\rho \in \N$ in time and $\kappa \in \{2,3\}$ in space. For each $\alpha \in [0,r]$, there is a constant $C<\infty$ such that, for all $h, \Delta t \in (0,1]$,
		\begin{align*}
		\sup_{t \in [0,T]} \norm{P_1(\hat{S}_{h, \Delta t}(t)-\hat{S}_{\Delta t}(t))\Theta^{-\frac{\alpha}{2}}}{\cL(\cH,H)} \le C \left({\Delta t}^{\min(\alpha \frac{\rho}{\rho+1},1)} + h^{\min(\alpha \frac{\kappa}{\kappa+1},\kappa)} \right).
		\end{align*}
	\end{lemma}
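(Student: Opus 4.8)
The plan is to work at the grid points $t_n$, $n=0,\dots,N_{\Delta t}$, which is enough since $\hat{S}_{\Delta t}$ and $\hat{S}_{h,\Delta t}$ are step functions and the supremum over $[0,T]$ is attained there. The engine of the proof is a discrete variation-of-constants formula: unrolling the one-step recursion $\hat{S}_{\Delta t}(t_n)=R({\Delta t}A)(I+{\Delta t}F)\hat{S}_{\Delta t}(t_{n-1})$ gives
\[
\hat{S}_{\Delta t}(t_n)=\tilde{S}_{\Delta t}(t_n)+{\Delta t}\sum_{j=0}^{n-1}\tilde{S}_{\Delta t}(t_{n-j})\,F\,\hat{S}_{\Delta t}(t_j),
\]
and, using $(R({\Delta t}A_h)P_h)^k=R({\Delta t}A_h)^kP_h=\tilde{S}_{h,\Delta t}(t_k)$, the same identity holds for the fully discrete flow with $\tilde{S}_{h,\Delta t}$ in place of $\tilde{S}_{\Delta t}$. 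Subtracting the two identities reduces the problem to the difference $\tilde{S}_{h,\Delta t}-\tilde{S}_{\Delta t}$ of the \emph{unperturbed} flows, for which Lemmas~\ref{lem:wave_semidisc_semigroup_error} and~\ref{lem:wave_fullydisc_semigroup_error} are available.

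Writing $D_n=\norm{P_1(\hat{S}_{h,\Delta t}(t_n)-\hat{S}_{\Delta t}(t_n))\Theta^{-\alpha/2}}{\cL(\cH,H)}$, I would apply $P_1(\cdot)\Theta^{-\alpha/2}$ to the difference of the two Duhamel formulas. The leading term $P_1(\tilde{S}_{h,\Delta t}(t_n)-\tilde{S}_{\Delta t}(t_n))\Theta^{-\alpha/2}$ is controlled, via the triangle inequality through $S(t_n)$ and $\norm{P_1}{\cL(\cH,H)}\le1$, by Lemmas~\ref{lem:wave_fullydisc_semigroup_error} and~\ref{lem:wave_semidisc_semigroup_error}, yielding exactly the rate ${\Delta t}^{\min(\alpha\rho/(\rho+1),1)}+h^{\min(\alpha\kappa/(\kappa+1),\kappa)}$. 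Inside the sum I would add and subtract $\tilde{S}_{h,\Delta t}(t_{n-j})F\hat{S}_{\Delta t}(t_j)$ to split each summand as
\[
\tilde{S}_{h,\Delta t}(t_{n-j})\,F\,(\hat{S}_{h,\Delta t}(t_j)-\hat{S}_{\Delta t}(t_j))+(\tilde{S}_{h,\Delta t}(t_{n-j})-\tilde{S}_{\Delta t}(t_{n-j}))\,F\,\hat{S}_{\Delta t}(t_j).
\]
The decisive point is that the fully discrete perturbed flow $\hat{S}_{h,\Delta t}$, for which no higher-order stability estimate is at hand, is kept inside the first (difference) term, while the second term retains the semidiscrete $\hat{S}_{\Delta t}$, which is regularizable.

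The regularity bookkeeping then proceeds through $F=BGP_1$ and the identities $\Theta^{\alpha/2}B=B\Lambda^{\alpha/2}$ and $P_1\Theta^{-\alpha/2}=\Lambda^{-\alpha/2}P_1$. For the first term, I would peel off $P_1(\hat{S}_{h,\Delta t}(t_j)-\hat{S}_{\Delta t}(t_j))\Theta^{-\alpha/2}$ and bound the remaining factors by $\norm{P_1\tilde{S}_{h,\Delta t}(t_{n-j})B}{\cL(\dot{H}^{-1},H)}\le1$ (from the uniform stability $\norm{\tilde{S}_{h,\Delta t}}{\cL(\cH)}\le1$ and~\eqref{eq:B_bound_2}) and $\norm{G}{\cL(H,\dot{H}^{-1})}\le C$ from Assumption~\ref{assumptions:waveexistence}, so that this term is $\lesssim D_j$. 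For the second term I would insert $\Theta^{-\alpha/2}\Theta^{\alpha/2}$ to separate the semigroup difference $P_1(\tilde{S}_{h,\Delta t}(t_{n-j})-\tilde{S}_{\Delta t}(t_{n-j}))\Theta^{-\alpha/2}$, again at the rate of the two error lemmas, leaving the factor $\Theta^{\alpha/2}F\hat{S}_{\Delta t}(t_j)\Theta^{-\alpha/2}$. This factor is uniformly bounded on $\cH$: one has $\Theta^{\alpha/2}F\Theta^{-\alpha/2}=B\Lambda^{\alpha/2}G\Lambda^{-\alpha/2}P_1\in\cL(\cH)$ because $G\in\cL(\dot{H}^\alpha,\dot{H}^{\alpha-1})$ by interpolation between Assumptions~\ref{assumptions:waveexistence} and~\ref{assumptions:waveregularity} (valid since $\alpha\in[0,r]$), while $\Theta^{\alpha/2}\hat{S}_{\Delta t}(t_j)\Theta^{-\alpha/2}=(R({\Delta t}A)(I+{\Delta t}\Theta^{\alpha/2}F\Theta^{-\alpha/2}))^j$ has norm at most $e^{CT}$ since $\Theta^{\alpha/2}$ commutes with $R({\Delta t}A)$ and $\norm{R({\Delta t}A)}{\cL(\cH)}\le1$ by~\eqref{eq:wave_semidiscrete_stability}.

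Collecting the estimates and using $n{\Delta t}\le T$ to absorb the accumulated consistency contributions of the sum into the leading term, I would obtain $D_n\le C({\Delta t}^{\min(\alpha\rho/(\rho+1),1)}+h^{\min(\alpha\kappa/(\kappa+1),\kappa)})+C{\Delta t}\sum_{j=0}^{n-1}D_j$. The discrete Gronwall lemma~\cite[2.2~(9)]{G75} then gives the claimed bound uniformly in $n$, hence, by the step-function structure, uniformly in $t\in[0,T]$. I expect the main obstacle to be exactly the splitting in the second paragraph: one must arrange the telescoped sum so that every loss of spatial or temporal regularity is borne either by the unperturbed semigroup difference (where the error lemmas apply with the sharp exponent $\alpha$) or by the semidiscrete perturbed flow (regularizable by commutativity of $\Theta^{\alpha/2}$ with $R({\Delta t}A)$), and never by $\hat{S}_{h,\Delta t}$.
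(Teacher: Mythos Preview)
Your proposal is correct and follows essentially the same approach the paper sketches: the paper notes that $\hat S_{\Delta t}(t_j)u$ and $\hat S_{h,\Delta t}(t_j)u$ are the semidiscrete and fully discrete approximations of $X(t_j)$ with $\xi=u$ and $Q=0$, and points to a ``standard Gronwall argument'' using Lemmas~\ref{lem:wave_semidisc_semigroup_error} and~\ref{lem:wave_fullydisc_semigroup_error}. Your discrete Duhamel expansion, the add--subtract splitting that keeps $\hat S_{h,\Delta t}$ only inside the term producing $D_j$, and the regularization of $\Theta^{\alpha/2}F\hat S_{\Delta t}(t_j)\Theta^{-\alpha/2}$ via commutativity of $\Theta^{\alpha/2}$ with $R(\Delta t A)$ together with interpolation for $G\in\cL(\dot H^\alpha,\dot H^{\alpha-1})$, is exactly the detailed execution of that argument.
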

	By the same Gronwall argument, there is a constant $C<\infty$ such that, for all $h, \Delta t \in (0,1]$ and $t \in [0,T]$, 
	\begin{equation}
	\label{eq:wave_stab_pert_semi_semigroup}
	\norm{\hat{S}_{\Delta t}(t)}{\cL(\cH)} \le C \text{ and } \norm{\hat{S}_{h,\Delta t}(t)}{\cL(\cH)} \le C.
	\end{equation}
	
	We can now define the fully discrete approximation of~\eqref{eq:C} and prove an error estimate with respect to the first component $\Cov(U(t))$, $t \in [0,T]$. It is denoted by $\tilde K_{h,\Delta t}$ and defined by $\tilde K_{h,\Delta t}(t_0) = P_h Q_\xi P_h$ and, for  $j \ge 1$, $\tilde K_{h,\Delta t}(t_j) = \hat{S}_{h,\Delta t} \tilde K_{h,\Delta t}(t_{j-1}) \hat{S}_{h,\Delta t}^* + \Delta t P_h B (P_h B)^*$.
	In closed form, 
	\begin{equation*}
	\begin{split}
	\tilde K_{h,\Delta t}(t_n) &= \hat{S}_{h,\Delta t}^{n} Q_\xi (\hat{S}_{h,\Delta t}^{n})^* + \Delta t \sum^{n-1}_{j=0} \hat{S}_{h,\Delta t}^{n-j-1} B (\hat{S}_{h,\Delta t}^{n-j-1} B)^* \\ 
	&= \hat{S}_{h,\Delta t}^{n} Q_\xi (\hat{S}_{h,\Delta t}^{n})^* + \Delta t \sum^{n-1}_{j=0} \hat{S}_{h,\Delta t}^{n-j-1} B Q B^* (\hat{S}_{h,\Delta t}^{n-j-1})^*,
	\end{split}
	\end{equation*}
	where the adjoint of $B$ is taken with respect to $\cL(H,\cH)$ in the last expression. In Theorem~\ref{thm:wavefullydiscreteerror}, the error estimate for $\tilde K_{h,\Delta t}$ is given. %
	
	\begin{theorem}
		\label{thm:wavefullydiscreteerror}
		Let Assumptions~\ref{assumptions:waveexistence} and~\ref{assumptions:waveregularity} be satisfied  for some $r \ge 0$ and $i \in \{1,2\}$ and let $\tilde K_{h, \Delta t}$ be based on the perturbed semigroup approximation $\hat{S}_{h, \Delta t}$ with order $\rho \in \N$ in time and $\kappa \in \{2,3\}$ in space. Then, there is a constant $C<\infty$ such that for all $h, \Delta t \in (0,1]$
		\begin{equation*}
		\sup_{n \in \{0,\ldots,N_{\Delta t}\}} \norm{P_1(K(t_n)-\tilde K_{h, \Delta t}(t_n))P_1^*}{\cL_i(\cH)} \le C \left({\Delta t}^{\min(r \frac{\rho}{\rho+1},1)} + h^{\min(r \frac{\kappa}{\kappa+1},\kappa)} \right).
		\end{equation*}
	\end{theorem}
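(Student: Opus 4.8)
The plan is to reach the fully discrete covariance $\tilde K_{h,\Delta t}$ from the true covariance $K$ through an intermediate \emph{perturbed semidiscrete} covariance, thereby separating the temporal error (which I can control in the full $\cL_i(\cH)$-norm) from the spatial error (for which only the first-component semigroup estimate of Lemma~\ref{lem:wave_fullydisc_semidisc_error} is available). Writing $S_F$ for the semigroup generated by $F-A$, the mild solution is $X(t) = S_F(t)\xi + \int_0^t S_F(t-s)B\dd W(s)$, so by independence of $\xi$ and the noise together with the It\^o isometry,
\[
K(t) = S_F(t) Q_\xi S_F(t)^* + \int_0^t S_F(t-s) B Q B^* S_F(t-s)^* \dd s.
\]
Correspondingly I introduce $\hat K_{\Delta t}(t_n) = \hat S_{\Delta t}^n Q_\xi (\hat S_{\Delta t}^n)^* + \Delta t \sum_{j=0}^{n-1} \hat S_{\Delta t}^{n-j-1} B Q B^* (\hat S_{\Delta t}^{n-j-1})^*$, the covariance of the perturbed semidiscrete scheme $\hat S_{\Delta t}$; it has exactly the closed form of $\tilde K_{h,\Delta t}$ with $\hat S_{h,\Delta t}$ replaced by $\hat S_{\Delta t}$. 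I then split
\[
P_1(K(t_n) - \tilde K_{h,\Delta t}(t_n)) P_1^* = P_1(K(t_n) - \hat K_{\Delta t}(t_n)) P_1^* + P_1(\hat K_{\Delta t}(t_n) - \tilde K_{h,\Delta t}(t_n)) P_1^*.
\]

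The structural fact that makes the argument work is that the first-component projection factors onto both sides of a sandwiched operator, $P_1 \Gamma_1 \tilde\Gamma \Gamma_2^* P_1^* = (P_1 \Gamma_1) \tilde\Gamma (P_1 \Gamma_2)^*$. Note that, because $F$ is baked into $S_F$ and $\hat S_{\Delta t}$, neither representation contains explicit cross terms in $F$, so the only middle factors that appear are the \emph{fixed} operators $Q_\xi$ and $B Q B^*$; this removes the need for any running regularity bound of the type in Lemma~\ref{lem:wave-regularity}, since the regularity $\Theta^{r/2}$ will always be absorbed into these fixed operators via Assumption~\ref{assumptions:waveregularity} and~\eqref{eq:B_bound_2}.

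For the temporal term I bound the full norm $\norm{K(t_n) - \hat K_{\Delta t}(t_n)}{\cL_i(\cH)}$ by the same estimates that prove Theorem~\ref{thm:wavesemidiscreteerror}: I apply~\eqref{eq:operator-identity} to the initial contribution and, after splitting $\int_0^{t_n}$ over the subintervals $(t_j,t_{j+1}]$, to each summand of the noise contribution. The semigroup differences $(S_F(\cdot)-\hat S_{\Delta t}(\cdot))\Theta^{-r/2}$ are controlled in $\cL(\cH)$ at rate ${\Delta t}^{\min(r\rho/(\rho+1),1)}$ — this perturbed full-norm semidiscrete estimate follows from Lemma~\ref{lem:wave_semidisc_semigroup_error} by the same bounded-perturbation Gronwall argument underlying Lemma~\ref{lem:wave_fullydisc_semidisc_error} — while the quadrature discrepancy within each subinterval is a H\"older-in-time error of $S_F$ of the same or higher order. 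Since the wave semigroup is a contraction the time kernel is bounded, the Riemann sum stays finite, and a discrete Gronwall step closes the estimate; finally $\norm{P_1(\cdot)P_1^*}{\cL_i(\cH)} \le \norm{\cdot}{\cL_i(\cH)}$.

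For the spatial term I apply~\eqref{eq:operator-identity} termwise, with $\Gamma_1 = P_1 \hat S_{\Delta t}^{k}$ and $\Gamma_2 = P_1 \hat S_{h,\Delta t}^{k}$ and $\tilde\Gamma \in \{Q_\xi, BQB^*\}$. In each resulting term one factor is a difference $P_1(\hat S_{\Delta t}^{k}-\hat S_{h,\Delta t}^{k})$; inserting $\Theta^{-r/2}\Theta^{r/2}$, the piece $P_1(\hat S_{\Delta t}^{k}-\hat S_{h,\Delta t}^{k})\Theta^{-r/2}$ is estimated by Lemma~\ref{lem:wave_fullydisc_semidisc_error} (with $\alpha=r$) at rate ${\Delta t}^{\min(r\rho/(\rho+1),1)} + h^{\min(r\kappa/(\kappa+1),\kappa)}$, the surplus $\Theta^{r/2}$ is absorbed into $Q_\xi$ or $BQB^*$ by Assumption~\ref{assumptions:waveregularity}, and the companion sum factor $P_1(\hat S_{\Delta t}^{k}+\hat S_{h,\Delta t}^{k})$ is bounded uniformly by the stability~\eqref{eq:wave_stab_pert_semi_semigroup}; then $\Delta t \sum_{j=0}^{n-1}$ of uniformly bounded terms is at most $CT$. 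The main obstacle is exactly the one this routing circumvents: the spatial semigroup error is one-sided, so a direct full-norm Gronwall comparison of $K$ with $\tilde K_{h,\Delta t}$ as in Theorem~\ref{thm:wavesemidiscreteerror} is impossible. The factorization $P_1 \Gamma_1 \tilde\Gamma \Gamma_2^* P_1^* = (P_1 \Gamma_1)\tilde\Gamma(P_1 \Gamma_2)^*$, applied to the \emph{sum-structured} intermediate $\hat K_{\Delta t}$ (which shares the time grid of $\tilde K_{h,\Delta t}$ and so admits a termwise use of~\eqref{eq:operator-identity}), confines the genuinely continuous-versus-discrete quadrature to the temporal term; the only point needing care is that $\hat K_{\Delta t}$ obeys the same full-norm semidiscrete bound as $\tilde K_{\Delta t}$, which holds since the two schemes differ by $O(\Delta t)$ corrections that are dominated by the rate $\min(r\rho/(\rho+1),1)\le 1$.
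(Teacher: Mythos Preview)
Your argument is correct and runs parallel to, but not identically with, the paper's. Both proofs insert a temporally semidiscrete intermediate and exploit the factorization $P_1\Gamma_1\tilde\Gamma\Gamma_2^*P_1^*=(P_1\Gamma_1)\tilde\Gamma(P_1\Gamma_2)^*$ so that Lemma~\ref{lem:wave_fullydisc_semidisc_error} can be applied on each side of the spatial comparison. The difference is in the choice of intermediate: the paper uses $\tilde K_{\Delta t}$ from~\eqref{eq:wave_semi_rec}, which lets it invoke Theorem~\ref{thm:wavesemidiscreteerror} for $\|K-\tilde K_{\Delta t}\|_{\cL_i(\cH)}$ off the shelf, but then has to rewrite $\tilde K_{\Delta t}(t_n)=\hat K_{\Delta t}(t_n)+R_{\Delta t}(t_n)$ and bound an explicit remainder $R_{\Delta t}$ (three sums involving $(S_{\Delta t}-I)$ and ${\Delta t}^2 F\tilde K_{\Delta t}F^*$), and it is precisely in this remainder analysis that Lemma~\ref{lem:wave-regularity} is used. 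You instead take $\hat K_{\Delta t}$ as the intermediate and compare $K$ to it directly through the perturbed semigroup $S_F$; this avoids $R_{\Delta t}$ and Lemma~\ref{lem:wave-regularity} altogether, at the cost of an auxiliary full-norm estimate $\|(S_F(t_n)-\hat S_{\Delta t}^n)\Theta^{-r/2}\|_{\cL(\cH)}\lesssim{\Delta t}^{\min(r\rho/(\rho+1),1)}$, which indeed follows from Lemma~\ref{lem:wave_semidisc_semigroup_error} by the Duhamel--Gronwall argument behind Lemma~\ref{lem:wave_fullydisc_semidisc_error} (here Assumption~\ref{assumptions:waveregularity}(i) gives $F\in\cL(\cH^r)$, so $\hat S_{\Delta t}$ is uniformly bounded on $\cH^r$). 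The spatial step $\hat K_{\Delta t}-\tilde K_{h,\Delta t}$ is handled identically in both proofs. One small remark: no Gronwall is needed in your covariance comparison itself, since the $S_F$-representation of $K$ has no recursive $K$-terms; the Gronwall lives only inside the semigroup estimate. Your final sentence, that $\hat K_{\Delta t}$ and $\tilde K_{\Delta t}$ differ by $O(\Delta t)$ corrections, is essentially the paper's remainder bound, so the two routes are complementary: yours is more self-contained once the perturbed-semigroup estimate is in hand, the paper's is more modular given that Theorem~\ref{thm:wavesemidiscreteerror} is already proved.
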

	\begin{proof}
		First, we recall that $\hat{S}_{\Delta t} = \tilde{S}_{\Delta t}(I + \Delta t F)$ so that we may rewrite~\eqref{eq:wave_semi_rec} as
		\begin{align*}
		\tilde K_{\Delta t}(t_j) &=  (\tilde S_{\Delta t} + \Delta t \tilde S_{\Delta t} F) \tilde K_{\Delta t}(t_{j-1}) (\tilde S_{\Delta t} + \Delta t \tilde S_{\Delta t} F)^* - {\Delta t}^2 \tilde S_{\Delta t} F \tilde K_{\Delta t}(t_{j-1}) F^* \tilde S_{\Delta t}^* \\
		&\quad+ \Delta t \tilde S_{\Delta t} B Q B^* \tilde S_{\Delta t}^*\\
		&=\hat{S}_{\Delta t} \tilde K_{\Delta t}(t_{j-1}) \hat{S}_{\Delta t}^* - {\Delta t}^2 \tilde S_{\Delta t} F \tilde K_{\Delta t}(t_{j-1}) F^* \tilde S_{\Delta t}^* + \Delta t \tilde S_{\Delta t} B Q B^* \tilde S_{\Delta t}^*
		\end{align*}
		Iterating this equality and making use of~\eqref{eq:operator-identity} yields, for $n = 1, \ldots, N_{\Delta t}$,
		\begin{align*}
		\tilde K_{\Delta t}(t_n) = \hat{S}_{\Delta t}^{n} Q_\xi (\hat{S}_{\Delta t}^{n})^*  + \Delta t \sum^{n-1}_{j=0} \hat{S}_{\Delta t}^{n-j-1} B Q B^* (\hat{S}_{\Delta t}^{n-j-1})^* + R_{\Delta t}(t_n),
		\end{align*}
		where 
		\begin{align*}
		R_{\Delta t}(t_n)  &= \frac{\Delta t}{2} \sum_{j=0}^{n-1} \hat{S}_{\Delta t}^{n-j-1} (\tilde S_{\Delta t} - I) (B Q B^* - {\Delta t}F \tilde K_{\Delta t}(t_j) F^*)  (\tilde S_{\Delta t} + I)^* (\hat{S}_{\Delta t}^{n-j-1})^* \\
		&\quad+ \frac{\Delta t}{2} \sum_{j=0}^{n-1} \hat{S}_{\Delta t}^{n-j-1} (\tilde S_{\Delta t} + I) (B Q B^* - {\Delta t}F \tilde K_{\Delta t}(t_j) F^*)  (\tilde S_{\Delta t} - I)^* (\hat{S}_{\Delta t}^{n-j-1})^* \\
		&\quad- {\Delta t}^2 \sum_{j=0}^{n-1} \hat{S}_{\Delta t}^{n-j-1} F \tilde K_{\Delta t}(t_j) F^*  (\hat{S}_{\Delta t}^{n-j-1})^* = \mathrm{I} + \mathrm{II} + \mathrm{III}.
		\end{align*}
		Adding and subtracting $\tilde K_{\Delta t}(t_n)$ in $K(t_n)-\tilde K_{h, \Delta t}(t_n)$, we therefore obtain the split
		\begin{align*}
		&\norm{P_1(K(t_n)-\tilde K_{h, \Delta t}(t_n))P_1^*}{\cL_i(H)} \\ &\quad\le\norm{P_1(K(t_n)-\tilde K_{\Delta t}(t_n))P_1^*}{\cL_i(H)} + \norm{P_1(\hat{S}_{h, \Delta t}^{n} - \hat{S}_{\Delta t}^{n}) Q_\xi (\hat{S}_{h, \Delta t}^{n} + \hat{S}_{\Delta t}^{n})^*P_1^*}{\cL_i(H)} \\
		&\qquad+ \lrnorm{\Delta t \sum^{n-1}_{j=0} P_1 (\hat{S}_{h, \Delta t}^{n-j-1} - \hat{S}_{\Delta t}^{n-j-1}) B Q B^* (\hat{S}_{h, \Delta t}^{n-j-1} + \hat{S}_{\Delta t}^{n-j-1})^* P_1^*}{\cL_i(H)} \\
		&\qquad+ \norm{P_1\mathrm{I}P_1^*}{\cL_i(H)} + \norm{P_1\mathrm{II}P_1^*}{\cL_i(H)} + \norm{P_1\mathrm{III}P_1^*}{\cL_i(H)}.
		\end{align*}
		The first term of this split is treated by Theorem~\ref{thm:wavesemidiscreteerror}, noting that $P_1\in \cL(\cH,H)$. For the second term, we use Lemma~\ref{lem:wave_fullydisc_semidisc_error}, Assumption~\ref{assumptions:waveregularity}\ref{assumptions:waveregularity:initialcov} and~\eqref{eq:wave_stab_pert_semi_semigroup} to see that
		\begin{align*}
		&\norm{P_1(\hat{S}_{h, \Delta t}^{n} - \hat{S}_{\Delta t}^{n}) Q_\xi (\hat{S}_{h, \Delta t}^{n} + \hat{S}_{\Delta t}^{n})^*P_1^*}{\cL_i(\cH)} \\
		&\quad\lesssim \norm{P_1(\hat{S}_{h, \Delta t}^{n} - \hat{S}_{\Delta t}^{n})\Theta^{-\frac{r}{2}}}{\cL(\cH,H)} \norm{\Theta^{\frac{r}{2}}Q_\xi}{\cL_i(\cH)} \lesssim {\Delta t}^{\min(r \frac{\rho}{\rho+1},1)} + h^{\min(r \frac{\kappa}{\kappa+1},\kappa)}.
		\end{align*}
		For the third term, the same estimates combined with Assumption~\ref{assumptions:waveregularity}\ref{assumptions:waveregularity:Q} and~\eqref{eq:B_bound_2} yield the bound
		\begin{align*}
		\Delta t \sum^{n-1}_{j=0} \norm{P_1 (\hat{S}_{h, \Delta t}^{n-j-1} - \hat{S}_{\Delta t}^{n-j-1}) \Theta^{-\frac{r}{2}} }{\cL(\cH,H)} \norm{\Lambda^{\frac{r-1}{2}} Q \Lambda^{-\frac{1}{2}}}{\cL_i(H)}  \lesssim {\Delta t}^{\min(r \frac{\rho}{\rho+1},1)} + h^{\min(r \frac{\kappa}{\kappa+1},\kappa)}.
		\end{align*}
		For $\norm{P_1\mathrm{I}P_1^*}{\cL_i(H)}$ and $\norm{P_1\mathrm{II}P_1^*}{\cL_i(H)}$, there is a constant $C<\infty$ such that for all $\Delta t \in (0,1]$
		\begin{equation*}
		\norm{(\tilde S_{\Delta t} - I)\Theta^{-\frac{r}{2}}}{\cL(\cH)} \le \norm{(\tilde S_{\Delta t} - S(\Delta t))\Theta^{-\frac{r}{2}}}{\cL(\cH)} + \norm{(S(\Delta t)-S(0))\Theta^{-\frac{r}{2}}}{\cL(\cH)} \le C {\Delta t}^{\min(r \frac{\rho}{\rho+1},1)}.
		\end{equation*}
		Here we have used the H\"older regularity of the semigroup $S$ \cite[Lemma~4.2]{KLL13} and Lemma~\ref{lem:wave_semidisc_semigroup_error}.
		Using this estimate, we obtain for the term $\norm{P_1\mathrm{I}P_1^*}{\cL_i(H)}$ that 
		\begin{align*}
		\norm{P_1\mathrm{I}P_1^*}{\cL_i(H)} &\le \frac{\Delta t}{2} \sum^{n-1}_{j=0} \norm{P_1\hat{S}^{n-j-1}_{\Delta t}}{\cL(\cH,H)}^2 \norm{(\tilde S_{\Delta t} - I)\Theta^{-\frac{r}{2}}}{\cL(\cH)} \norm{B\Lambda^\frac{1}{2}}{\cL(H,\cH)}^2 \norm{\tilde S_{\Delta t} + I}{\cL(\cH)} \\
		&\hspace{0.5em} \times  \big(\norm{\Lambda^\frac{r-1}{2} Q \Lambda^{-\frac{1}{2}}}{\cL_i(H)} + \Delta t \norm{\Lambda^\frac{r-1}{2}G\Lambda^{-\frac{r}{2}}}{\cL(H)}  \norm{\Theta^\frac{r}{2} \tilde K_{\Delta t}(t_j)}{\cL_i(H)} \norm{\Lambda^{-\frac{1}{2}}G}{\cL(H)} \big) \\
		&\lesssim {\Delta t}^{\min(r \frac{\rho}{\rho+1},1)},
		\end{align*}
		with an analogous result for $\norm{P_1\mathrm{II}P_1^*}{\cL_i(H)}$. Here, we also made use of Lemma~\ref{lem:wave-regularity}. This is also used to estimate the final term in the next calculation, completing the proof by Assumption~\ref{assumptions:waveregularity}\ref{assumptions:waveregularity:initialcov} and~\eqref{eq:B_bound_2} via
		\begin{equation*}
		\norm{P_1\mathrm{III}P_1^*}{\cL_i(H)} \le {\Delta t}^2  \sum^{n-1}_{j=0} \norm{P_1\hat{S}^{n-j-1}_{\Delta t}}{\cL(\cH,H)}^2 \norm{\Lambda^{-\frac{1}{2}}G}{\cL(H)}^2 \norm{\tilde K_{\Delta t}(t_j)}{\cL_i(H)} \lesssim \Delta t.
		\qedhere
		\end{equation*}	
	\end{proof}
	
	\begin{example}
		\label{ex:wave}
		Finally, we demonstrate the theoretical results obtained above numerically in the case that $\cD=(0,1), T=1$. We again let $\xi$ be deterministic. We set $G = -Q$ in order to obtain the model suggested in \cite{D09} for the vertical movement of a DNA strand suspended in fluid. We consider the piecewise linear finite element method for our discretization in space and the Crank--Nicolson discretization for our temporal approximation, so that we may take $\rho = \kappa = 2$ in Theorem~\ref{thm:wavefullydiscreteerror}. We let $Q$ be an integral operator with kernel $q$, as in the second part of Example~\ref{ex:heat}.
		
		First, we let $q(x,y) = q(x-y)$ be a Mat\'ern covariance function, which, for $z \in \R$, is given by $q(z) = \sigma^2 2^{1-\nu} \Gamma(\nu)^{-1} (\sqrt{2 \nu} z \rho^{-1})^\nu K_\nu (\sqrt{2 \nu} z \rho^{-1})$
		with parameters $\sigma = 10, \nu = 0.01$ and $\rho = 0.1$. Here $K_\nu$ denotes the modified Bessel function of the second kind. Figure~\ref{subfig:wave-path} shows an approximate realization of the solution to~\eqref{eq:original_wave_equation} for this choice of $q$ and Figure~\ref{subfig:wave-cov} shows its covariance function, corresponding to~$P_1 \tilde K_{h, {\Delta t}}(t)P_1^*$, at $t=0.1$. Since the Fourier transform of $q$ is proportional to $\xi \mapsto (1 + \xi^2)^{-\nu - d/2} = (1 + \xi^2)^{-0.501}$, the results of \cite[Section~4]{KLP21b} implies that $\norm{ \Lambda^{\frac{r-1}{2}} Q}{\cL_2(H)} < \infty$ for $r < 3/2$. Moreover, since $d=1$, $\norm{ \Lambda^{-\frac{1}{2}}}{\cL_2(H)} = \norm{I}{\cL_2(\dot{H}^1,H)} < \infty$.
		By~\eqref{eq:schatten_bound_2}, therefore, Assumption~\ref{assumptions:waveregularity} is satisfied with $r<3/2$ and $i \in \{1,2\}$. From Theorem~\ref{thm:wavefullydiscreteerror} we expect to see a convergence rate of essentially $1$ if we plot the errors $\norm{P_1(K(T)-\tilde K_{h, \Delta t}(T))P_1^*}{\cL_i(H)}$, $i \in \{1,2\}$, with respect to decreasing values of $h = \Delta t$ in a log-log plot. This is in line with our observations in Figure~\ref{subfig:wave-errors-1} which shows the errors for $h = \Delta t = 2^{-1},\ldots, 2^{-8}$. We have again used a reference solution $\tilde K_{h, \Delta t}(T)$ at $h = \Delta t = 2^{-9}$.
		
		\begin{figure}[ht!]
			\centering
			
			\subfigure[Errors with Mat\'ern noise for mesh sizes $h = \Delta t = 2^{-1},\ldots, 2^{-8}$.\label{subfig:wave-errors-1}]{\includegraphics[width = .49\textwidth]{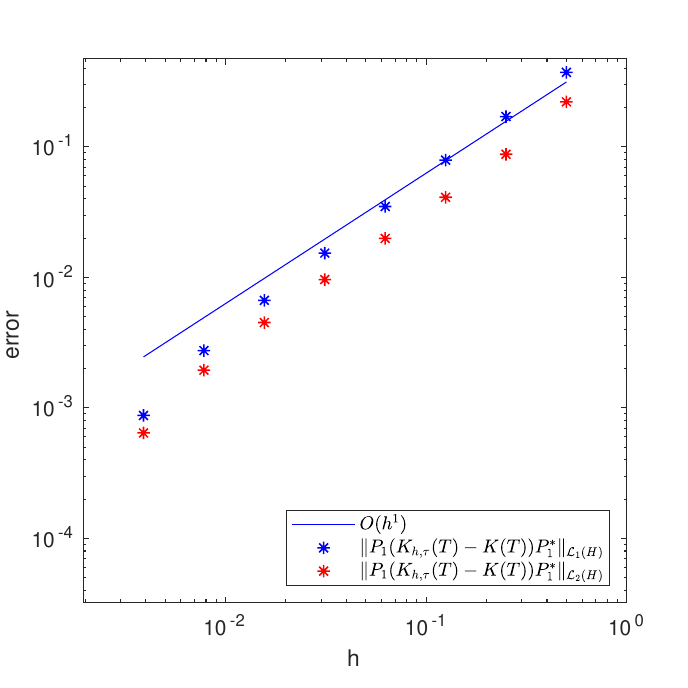}}
			\subfigure[Errors with Brownian bridge noise for mesh sizes $h = \sqrt{\Delta t} = 2^{-1},\ldots, 2^{-5}$. \label{subfig:wave-errors-2}]{\includegraphics[width = .49\textwidth]{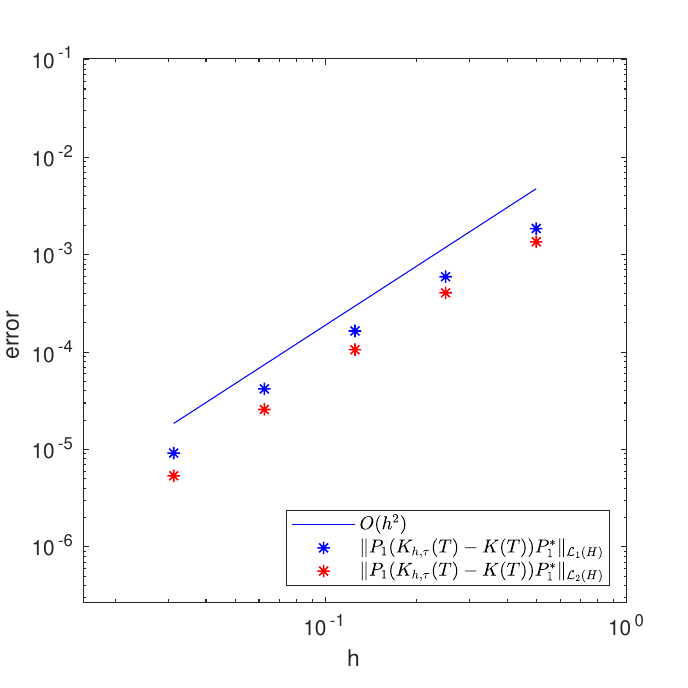}}
			\caption{Approximate errors $\norm{P_1(K(T)-\tilde K_{h, \Delta t}(T))P_1^*}{\cL_i(H)}$, $i \in \{1,2\}$, for the equations of Example~\ref{ex:wave}.}
		\end{figure} 
		
		Next, we set $q(x,y) = \min(x,y)- xy;$ for $x,y \in \cD$, i.e., the covariance function of a Brownian bridge on $\cD$. By its Karhunen--Lo\`eve expansion one obtains $Q = \Lambda^{-1}$. Since $\Lambda^{-1}$ has eigenvalues $(j^2\pi^2)_{j = 1}^\infty$, Assumption~\ref{assumptions:waveregularity} is fulfilled for all $r < 7/2$ when $i=2$ and for all $r < 3$ when $i=1$. In either case, we expect to see a convergence rate of order $2$ if we plot the errors $\norm{P_1(K(T)-\tilde K_{h, \Delta t}(T))P_1^*}{\cL_i(H)}$, $i \in \{1,2\}$, with respect to decreasing values of $h = \sqrt{\Delta t}$ in a log-log plot. This is in line with~Figure~\ref{subfig:wave-errors-2}, where we have plotted the errors with $h = \sqrt{\Delta t} = 2^{-1},\ldots, 2^{-5}$. In place of $K(T)$ we have used $\tilde K_{h, \Delta t}(T)$ at $h = \sqrt{\Delta t} = 2^{-6}$.  
	\end{example}
	
\bibliographystyle{hplain}
\bibliography{spde-fem-covariance}	

\appendix

\section*{Appendix A.\ A derivation of a trace-class error expression}
\label{sec:appendix}

This appendix serves to provide a derivation of the expression 
\begin{equation}
\label{eq:trace-error-formula-app}
\norm{\tilde K_{h, {\Delta t}}(T)-\tilde K_{h', {\Delta t}'}(T)}{\cL_1(H)} = \trace\left(  \left|\mathbf{N}_{h,h'}^{\frac{1}{2}} \left[\begin{array}{cc}
\mathbf{K}_{N_{\Delta t},h,\Delta t} & 0 \\
0 & -\mathbf{K}_{N_{{\Delta t}'},h',{\Delta t}'}
\end{array}\right] 	\mathbf{N}_{h,h'}^{\frac{1}{2}}\right| \right),
\end{equation}
in the setting of Example~\ref{ex:heat}.
First note that $\norm{\tilde K_{h, {\Delta t}}(T)-\tilde K_{h', {\Delta t}'}(T)}{\cL_1(H)} = \trace(|\tilde K_{h, {\Delta t}}(T)-\tilde K_{h', {\Delta t}'}(T)|)$. Next, let us write $\psi_j = \phi^h_{j}$ for $j = 1, \ldots, N_h$, $\psi_j = \phi^{h'}_{j-N_{h}}$ for $j=N_h+1,\ldots,N_h+N_{h'}$ and 
\begin{equation*}
\Upsilon = \sum^{N_h+N_{h'}}_{i,j=1} \Big(\mathbf{N}_{h,h'}^{-\frac{1}{2}}\Big| \mathbf{N}_{h,h'}^{\frac{1}{2}} \Big[\begin{array}{cc}
\mathbf{K}_{N_{\Delta t},h,\Delta t} & 0 \\
0 & -\mathbf{K}_{N_{{\Delta t}'},h',{\Delta t}'}
\end{array}\Big] \mathbf{N}_{h,h'}^{\frac{1}{2}} \Big| \mathbf{N}_{h,h'}^{-\frac{1}{2}}\Big)_{ij} \psi_i \otimes \psi_j.
\end{equation*}
Here $\mathbf{N}_{h,h'}^{-1/2}$ denotes the pseudoinverse of $\mathbf{N}_{h,h'}^{1/2}$. Note that $(\mathbf{N}_{h,h'})_{ij} = \inpro{\psi_i}{\psi_j}$ so that $\mathbf{N}_{h,h'}, \mathbf{N}_{h,h'}^{1/2}$ and $\mathbf{N}_{h,h'}^{-1/2}$ are in $\Sigma^+(\R^{N_h+N_{h'}})$. We claim that $\Upsilon = |\tilde K_{h, {\Delta t}}(T)-\tilde K_{h', {\Delta t}'}(T)|$. To see this, we first note that $\Upsilon \in \Sigma^+(H)$ as a consequence of the matrix of coefficients in this sum being an element of $\Sigma^+(\R^{N_h+N_{h'}})$. Thus, it suffices to show that $\Upsilon^2 = (\tilde K_{h, {\Delta t}}(T)-\tilde K_{h', {\Delta t}'}(T))^*(\tilde K_{h, {\Delta t}}(T)-\tilde K_{h', {\Delta t}'}(T))$. By a direct calculation using the definition of the tensor product $\otimes$ and symmetry of $\mathbf{N}_{h,h'}$, it follows that 
\begin{equation}
\label{eq:upsilon-sq}
\begin{split}
\Upsilon^2 &= \sum^{N_h+N_{h'}}_{i,j=1} \Big(\mathbf{N}_{h,h'}^{-\frac{1}{2}}\Big| \mathbf{N}_{h,h'}^{\frac{1}{2}} \Big[\begin{array}{cc}
\mathbf{K}_{N_{\Delta t},h,\Delta t} & 0 \\
0 & -\mathbf{K}_{N_{{\Delta t}'},h',{\Delta t}'}
\end{array}\Big] \mathbf{N}_{h,h'}^{\frac{1}{2}} \Big| \\
&\hspace{5em}\times
\mathbf{P}_{\im(\mathbf{N}_{h,h'}^\frac{1}{2})}
\Big| \mathbf{N}_{h,h'}^{\frac{1}{2}} \Big[\begin{array}{cc}
\mathbf{K}_{N_{\Delta t},h,\Delta t} & 0 \\
0 & -\mathbf{K}_{N_{{\Delta t}'},h',{\Delta t}'}
\end{array}\Big] \mathbf{N}_{h,h'}^{\frac{1}{2}} \Big|
\mathbf{N}_{h,h'}^{-\frac{1}{2}}\Big)_{ij} \psi_i \otimes \psi_j,
\end{split}
\end{equation}
where $\mathbf{P}_{\im(\mathbf{N}_{h,h'}^{1/2})} = \mathbf{N}_{h,h'}^{-1/2} \mathbf{N}_{h,h'}^{1/2} = \mathbf{N}_{h,h'}^{1/2} \mathbf{N}_{h,h'}^{-1/2}$ is the projection onto $\im(\mathbf{N}_{h,h'}^{1/2})$, the range of $\mathbf{N}_{h,h'}^{1/2}$. Since the kernels, and hence the ranges, of a matrix in $\Sigma^+(\R^{N_h+N_{h'}})$ and its symmetric positive semidefinite square root coincide, we have $\im(\mathbf{N}_{h,h'}) = \im(\mathbf{N}_{h,h'}^{1/2})$ and thus
\begin{equation}
\label{eq:range-statement}
\begin{split}
&\im\Big(\mathbf{N}_{h,h'}\Big) \\
&\quad\supset \im\Big(\mathbf{N}_{h,h'}^{\frac{1}{2}} \Big[\begin{array}{cc}
\mathbf{K}_{N_{\Delta t},h,\Delta t} & 0 \\
0 & -\mathbf{K}_{N_{{\Delta t}'},h',{\Delta t}'}
\end{array}\Big]\mathbf{N}_{h,h'}\Big[\begin{array}{cc}
\mathbf{K}_{N_{\Delta t},h,\Delta t} & 0 \\
0 & -\mathbf{K}_{N_{{\Delta t}'},h',{\Delta t}'}
\end{array}\Big]\mathbf{N}_{h,h'}^{\frac{1}{2}}\Big) \\
&\quad=\im\Big(\Big| \mathbf{N}_{h,h'}^{\frac{1}{2}} \Big[\begin{array}{cc}
\mathbf{K}_{N_{\Delta t},h,\Delta t} & 0 \\
0 & -\mathbf{K}_{N_{{\Delta t}'},h',{\Delta t}'}
\end{array}\Big] \mathbf{N}_{h,h'}^{\frac{1}{2}} \Big|\Big).
\end{split}
\end{equation}
Combining this with~\eqref{eq:upsilon-sq}, we find that 
$$\Upsilon^2 = \sum^{N_h+N_{h'}}_{i,j=1} \mathbf{U}_{ij} \psi_i \otimes \psi_j,$$ 
where $\mathbf{U} = \mathbf{P}_{\im(\mathbf{N}_{h,h'})}  \mathbf{\tilde K} \mathbf{P}_{\im(\mathbf{N}_{h,h'})}$ and
\begin{equation*}
\mathbf{\tilde K} = \Big[\begin{array}{cc}
\mathbf{K}_{N_{\Delta t},h,\Delta t} & 0 \\
0 & -\mathbf{K}_{N_{{\Delta t}'},h',{\Delta t}'}
\end{array}\Big] \mathbf{N}_{h,h'}
\Big[\begin{array}{cc}
\mathbf{K}_{N_{\Delta t},h,\Delta t} & 0 \\
0 & -\mathbf{K}_{N_{{\Delta t}'},h',{\Delta t}'}
\end{array}\Big].
\end{equation*}
Moreover, since 
\begin{equation*}
\tilde K_{h, {\Delta t}}(T)-\tilde K_{h', {\Delta t}'}(T) = \sum^{N_h+N_{h'}}_{i,j=1} \Big(\Big[\begin{array}{cc}
\mathbf{K}_{N_{\Delta t},h,\Delta t} & 0 \\
0 & -\mathbf{K}_{N_{{\Delta t}'},h',{\Delta t}'}
\end{array}\Big]\Big)_{ij}\psi_i \otimes \psi_j
\end{equation*}
it follows from a direct calculation that 
\begin{equation*}
(\tilde K_{h, {\Delta t}}(T)-\tilde K_{h', {\Delta t}'}(T))^*(\tilde K_{h, {\Delta t}}(T)-\tilde K_{h', {\Delta t}'}(T)) = \sum^{N_h+N_{h'}}_{i,j=1} \mathbf{\tilde K}_{ij} \psi_i \otimes \psi_j.
\end{equation*}
Note now that 
\begin{align*}
&\norm{(\tilde K_{h, {\Delta t}}(T)-\tilde K_{h', {\Delta t}'}(T))^*(\tilde K_{h, {\Delta t}}(T)-\tilde K_{h', {\Delta t}'}(T)) - \Upsilon^2}{\cL_2(H)}^2 \\
&\quad= \sum_{i=1}^\infty \norm{\big((\tilde K_{h, {\Delta t}}(T)-\tilde K_{h', {\Delta t}'}(T))^*(\tilde K_{h, {\Delta t}}(T)-\tilde K_{h', {\Delta t}'}(T)) - \Upsilon^2\big)e_i}{H}^2 \\
&\quad= \sum_{i=1}^\infty \norm{\sum^{N_h+N_{h'}}_{j,k=1} (\mathbf{\tilde K}_{jk} - \mathbf{U}_{jk}) \inpro[H]{\psi_k}{e_i} \psi_j }{H}^2 \\
&\quad= \sum^{N_h+N_{h'}}_{j,k,\ell,m=1} \sum_{i=1}^\infty (\mathbf{\tilde K}_{jk} - \mathbf{U}_{jk}) \inpro[H]{\psi_k}{e_i} (\mathbf{\tilde K}_{\ell m} - \mathbf{U}_{\ell m}) \inpro[H]{\psi_m}{e_i} \inpro[H]{\psi_j}{\psi_\ell} \\
&\quad= \sum^{N_h+N_{h'}}_{j,k,\ell,m=1} (\mathbf{\tilde K}_{jk} - \mathbf{U}_{jk}) \inpro[H]{\psi_k}{\psi_m} (\mathbf{\tilde K}_{\ell m} - \mathbf{U}_{\ell m}) \inpro[H]{\psi_j}{\psi_\ell} \\
&\quad= \sum^{N_h+N_{h'}}_{\ell,m=1} \left( \mathbf{N}_{h,h'} \left( \mathbf{\tilde K} - \mathbf{P}_{\im(\mathbf{N}_{h,h'})}  \mathbf{\tilde K} \mathbf{P}_{\im(\mathbf{N}_{h,h'})} \right) \mathbf{N}_{h,h'} \right)_{\ell m} (\mathbf{\tilde K}_{\ell m} - \mathbf{U}_{\ell m})= 0,
\end{align*}
where the last equality follows from the fact that
\begin{align*}
\mathbf{N}_{h,h'} \left( \mathbf{\tilde K} - \mathbf{P}_{\im(\mathbf{N}_{h,h'})}  \mathbf{\tilde K} \mathbf{P}_{\im(\mathbf{N}_{h,h'})} \right) \mathbf{N}_{h,h'} &= \mathbf{N}_{h,h'} \left( \mathbf{\tilde K} - \mathbf{P}_{\im(\mathbf{N}_{h,h'})}  \mathbf{\tilde K} \right) \mathbf{N}_{h,h'} \\ &= \mathbf{N}_{h,h'} (\mathbf{I} - \mathbf{P}_{\im(\mathbf{N}_{h,h'})}) \mathbf{\tilde K}\mathbf{N}_{h,h'} \\
&= \mathbf{N}_{h,h'}\mathbf{P}_{\ke(\mathbf{N}_{h,h'})}\mathbf{\tilde K}\mathbf{N}_{h,h'} = 0.
\end{align*}
With this we have established that $\Upsilon = |\tilde K_{h, {\Delta t}}(T)-\tilde K_{h', {\Delta t}'}(T)|$. Finally, we obtain~\eqref{eq:trace-error-formula-app} via
\begin{align*}
&\trace(|\tilde K_{h, {\Delta t}}(T)-\tilde K_{h', {\Delta t}'}(T)|) \\
&\quad= \sum^{N_h+N_{h'}}_{i,j=1} \Big(\mathbf{N}_{h,h'}^{-\frac{1}{2}}\Big| \mathbf{N}_{h,h'}^{\frac{1}{2}} \Big[\begin{array}{cc}
\mathbf{K}_{N_{\Delta t},h,\Delta t} & 0 \\
0 & -\mathbf{K}_{N_{{\Delta t}'},h',{\Delta t}'}
\end{array}\Big] \mathbf{N}_{h,h'}^{\frac{1}{2}} \Big| \mathbf{N}_{h,h'}^{-\frac{1}{2}}\Big)_{ij} \inpro[H]{\psi_j}{\psi_i} \\
&\quad= \trace\Big(\mathbf{N}_{h,h'}^{-\frac{1}{2}}\Big| \mathbf{N}_{h,h'}^{\frac{1}{2}} \Big[\begin{array}{cc}
\mathbf{K}_{N_{\Delta t},h,\Delta t} & 0 \\
0 & -\mathbf{K}_{N_{{\Delta t}'},h',{\Delta t}'}
\end{array}\Big] \mathbf{N}_{h,h'}^{\frac{1}{2}} \Big| \mathbf{N}_{h,h'}^{\frac{1}{2}}\Big) \\
&\quad= \trace\Big(\mathbf{P}_{\im(\mathbf{N}_{h,h'}^{1/2})}\Big| \mathbf{N}_{h,h'}^{\frac{1}{2}} \Big[\begin{array}{cc}
\mathbf{K}_{N_{\Delta t},h,\Delta t} & 0 \\
0 & -\mathbf{K}_{N_{{\Delta t}'},h',{\Delta t}'}
\end{array}\Big] \mathbf{N}_{h,h'}^{\frac{1}{2}} \Big|\Big) \\
&\quad= \trace\Big(\Big| \mathbf{N}_{h,h'}^{\frac{1}{2}} \Big[\begin{array}{cc}
\mathbf{K}_{N_{\Delta t},h,\Delta t} & 0 \\
0 & -\mathbf{K}_{N_{{\Delta t}'},h',{\Delta t}'}
\end{array}\Big] \mathbf{N}_{h,h'}^{\frac{1}{2}} \Big|\Big).
\end{align*}
Here we made use of the cyclic property of the trace and~\eqref{eq:range-statement}.
	
\end{document}